\numberwithin{table}{section}
\numberwithin{equation}{section}
\definecolor{newblue}{RGB}{0,102,204}
\definecolor{newred}{RGB}{206,32,41}
\theoremstyle{plain}
\newtheorem{theorem}{Theorem}[section]
\newtheorem{corollary}[theorem]{Corollary}
\newtheorem{proposition}[theorem]{Proposition}
\newtheorem{lemma}[theorem]{Lemma}
\theoremstyle{definition}
\newtheorem{notation}{Notation}[section]
\newtheorem{definition}{Definition}[section]
\theoremstyle{remark}
\newtheorem{remark}{Remark}[section]
\title{An explicit bound for the log-canonical 
  degree of curves on open surfaces.}
\author{Pietro Sabatino}
\address{Via Val Sillaro 5 - 00141 Roma}
\email{pietrsabat@gmail.com}
\thanks{ The author is really grateful to M.~McQuillan for his continuous
  support, encouragement and uncountable discussions and suggestions. 
  This project started during the \emph{Junior
  Trimester Program - Algebraic Geometry} (February-April 2014)
  at the \emph{Hausdorff 
  Research Institute for Mathematics} (Bonn), to the institute its staff and
  participants of the trimester goes the gratitude of the author 
  for the hospitality
  and the fruitful scientific atmosphere. 
   The author would also like to thank F.~Polizzi and 
  A.~Rapagnetta for useful discussions and suggestions. Finally the author
  would like to express his gratitude to the anonymous reviewer whose valuable
  comments improved the clarity and quality of this paper. 
  }
\subjclass[2010]{Primary 14J29; Secondary 14J60, 14C17} 
\keywords{Bogomolov-Miyaoka-Yau inequality for open surfaces,
explicit bound of the log-canonical degree of curves.}
\begin{document}

\begin{abstract}
  Let $X$, $D$ be a smooth projective surface and a simple normal
  crossing divisor on $X$, respectively. 
  Suppose $\kappa (X, K_X + D)\ge 0$,
  let $C$ be an irreducible curve on $X$
  whose support is not contained in $D$
  and $\alpha$ a rational number in $ [ 0, 1 ]$.
  Following Miyaoka, we define an orbibundle
  $\mathcal{E}_\alpha$ as a suitable free subsheaf of log differentials on a
  Galois cover of $X$. 
  Making use of $\mathcal{E}_\alpha$ we prove a Bogomolov-Miyaoka-Yau inequality for
  the couple $(X, D+\alpha C)$.
  Suppose moreover that $K_X+D$ is big and nef and $(K_X+D)^2 $ is greater than
  $e_{X\setminus D}$, namely the topological Euler number of the open surface
  $X\setminus D$. 
  As a consequence of the inequality, 
  by varying $\alpha$, we deduce a bound for
  $(K_X+D)\cdot C$ by an explicit function of the invariants: $(K_X+D)^2$, 
  $e_{X\setminus D}$ and $e_{C \setminus D} $, namely the
  topological Euler number of the normalization of $C$ minus
  the points in the set theoretic counterimage of $D$. We finally deduce that
  on such  surfaces curves with $- e_{C\setminus D}$ bounded form a bounded
  family, in particular there are only a finite number of curves $C$ on $X$
  such that $- e_{C\setminus D}\le 0$. 
\end{abstract}

\maketitle

\section{Introduction and statement of results}
Let $X$ be a minimal complex projective surface of general type 
such that $K_X^2 > c_2(X)$,
in \cite{bogomolov-families}, Bogomolov proved the well known result 
according to which irreducible
curves of fixed geometric genus on $X$ form a bounded family.
Since Bogomolov's argument depended on
the analysis of curves contained in a certain closed set (see
\cite{deschamps-courbes} for an exposition),
his remarkable result was not effective.
Indeed, Bogomolov was able to prove that curves in this closed set form a 
bounded family by considerations involving
algebraic foliations but without providing an explicit bound on their degree. 
Because of this, in a
deformation of the surface $X$, the number of either rational or elliptic curves
might in principle tend to infinity. 
This situation can be ruled out providing 
an upper bound on the canonical degree of irreducible curves on $X$ by a function 
of the invariants of $X$ and the geometric genus of the curve. 
The existence of such a function and its form was then conjectured
in various places and
in slightly different contexts, see for instance
\cite[\S 9]{tian-ke_algebraic_manifolds}, with the function depending only on
$K^2_X$, $c_2(X)$ and the geometric genus of the curve. 
The conjecture was
proved with some restrictive hypothesis on the singularities of the curve
involved by Langer in \cite{langer-orbifold} and finally in its full generality
by Miyaoka in \cite{miyaoka-orbibundle}. It is interesting to note that part of
Miyaoka's result can be recovered by methods closer in spirit to the original
argument of Bogomolov, see McQuillan \cite[Corollary 1.3]{mcquillan-mixed}, 
though one is able to prove the existence of the afore mentioned function no
explicit form can be established. 

The aim of the present paper is to prove a bound as in \cite{miyaoka-orbibundle}
but in the contest of open surfaces. 
Let then $X$ be a smooth projective surface, $C$ be an irreducible curve and $D$ a 
simple normal crossing divisor on $X$. 
In what follows we will assume that the curve $C$ is not part
of the boundary divisor $D$, even if not explicitly stated.  
Regarding divisors and line bundles we will generally follow the terminology and 
notation of \cite{lazarsfeld-positivityI}, in particular a simple normal
crossing divisor is a reduced divisor whose components are smooth and cross
normally. Moreover, given a divisor $D$ we identify its support,
$\mathrm{Supp}\ (D)$ with the underlying effective reduced divisor. 

First of all, following Miyaoka \cite{miyaoka-orbibundle}, we are going to prove a
Bogomolov-Miyaoka-Yau inequality for the couple $(X, D+\alpha C)$, $\alpha \in
[0,1]$ a rational number, and then 
deduce our bound from this inequality. In order to state the result we need a 
couple of definitions. 

\begin{definition} \label{notation:relativepoincare}
  Denote by $e_{X\setminus D}$ the topological Euler
  number of the open surface $X\setminus D$. Note that $e_{X\setminus D} =
  e_{\mathrm{orb}}(X\setminus D)$ is the orbifold Euler number of $(X,D)$, see Remark
  \ref{remark:explicitep}.
  Let $\eta \colon \widetilde{C}\rightarrow C$ be the normalization of
  $C$, we set
  \begin{equation*}
	  e_{C\setminus D} := e_{\mathrm{top}}\left( \widetilde{C}\setminus
	  \eta^{-1}(D)\right) =  e_{\mathrm{top}}( \widetilde{C} )- 
    \sharp \left( \eta^{-1}(D)\right) \ ,
  \end{equation*}
  namely the topological Euler number of the open set $\widetilde{C}\setminus
  \nu^{-1}(D)$.
\end{definition}

\begin{definition}
  \label{definition:Drational}
  Let $C$ be a curve on $X$ not contained in $D$,
  we say that $C$ is a \emph{ smooth $D$-rational curve }
  if $C\cong \mathbb{P}^1$ and $D \cdot C\le 1$. In other words, $C$ is smooth,
  it crosses $D$ transversally and $C\setminus D$ contains an open set
  isomorphic to $\mathbb{A}^1$.
\end{definition}

\begin{theorem}[Bogomolov-Miyaoka-Yau type inequality]
  \label{theorem:main}
  Let $X$ be a smooth projective surface, $D$ be a simple normal crossing divisor on 
  $X$ and $C$ an irreducible curve on $X$ not contained in $D$. 
  Suppose that $K_X+D$ is a $\mathbb{Q}$-effective divisor \footnote{In other words
    the log Kodaira dimension of $X\setminus D$ is greater than or equal to
    zero, namely $\kappa(X, K_X + D ) \ge 0$.}, 
  then:  
  \begin{enumerate} [label = (\roman*),ref = {\thetheorem .(\roman*)}]
    \item \label{item:maininequality}
      If $\alpha$ is a real number $\alpha\in [0,1]$, then the following
      inequality holds:
      \begin{equation} \label{equation:maininequality}
	\frac{\alpha^2}{2}  \left[ C^2 + 3(K_X+D) \cdot C + 
	  3 e_{ C \setminus D} 
	\right] 
	-2 \alpha \left[ (K_X+D)\cdot C  
	+ \frac{3}{2} e_{ C \setminus D} \right]
	+ 3 e_{X\setminus D} - (K_X+D)^2\ge 0\ .
      \end{equation}
    \item \label{item:mainsecondinequality} Suppose moreover that $C$ is
      not a smooth $D$-rational curve and $( K_X + D )\cdot C \ge -
      \tfrac{3}{2} e_{X\setminus D}$ then the following inequality
      holds:
      \begin{equation}
	\label{equation:mainsecondary}
	2 \left[ (K_X+D)\cdot C + \dfrac{3}{2} 
	e_{C \setminus D} \right]^2  - 
	\left[ 3 e_{X\setminus D} - (K_X+D)^2 \right] 
	\left[ C^2 + 3 (K_X+D) \cdot C + 3 e_{C\setminus D} 
	\right] \le 0\ .
      \end{equation}
  \end{enumerate}
\end{theorem}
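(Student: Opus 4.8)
My plan is to read inequality \eqref{equation:maininequality} as an orbifold Bogomolov–Miyaoka–Yau inequality $c_1(\mathcal{E}_\alpha)^2\le 3\,c_2(\mathcal{E}_\alpha)$ for a rank-two orbibundle $\mathcal{E}_\alpha$ attached to the pair $(X,D+\alpha C)$, and then to obtain \eqref{equation:mainsecondary} from it by a one-variable optimisation in $\alpha$. Both sides of \eqref{equation:maininequality} are polynomial in $\alpha$, so it is enough to prove part \ref{item:maininequality} for rational $\alpha\in[0,1]$ and to pass to arbitrary real $\alpha\in[0,1]$ by continuity; accordingly I fix $\alpha=a/b\in\mathbb{Q}$.

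To build $\mathcal{E}_\alpha$ I would first choose, after Kawamata and Bloch–Gieseker, a finite Galois cover $\pi\colon Y\to X$ with group $G$, with $Y$ smooth, ramified to order $b$ along $C$ and along $D$ so that the fractional structure of $\Delta:=D+\alpha C$ is resolved into a genuine reduced simple normal crossing divisor $\widetilde{D}=(\pi^{*}\Delta)_{\mathrm{red}}$ upstairs. Following Miyaoka I would then take $\mathcal{E}_\alpha$ to be the locally free subsheaf of $\Omega^1_Y(\log\widetilde{D})$ that interpolates, through the weight $\alpha$ along $C$, between the pullback of $\Omega^1_X(\log D)$ (the value $\alpha=0$) and the log differentials that fully see $C$. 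The purpose of the construction is the control of its Chern numbers: $c_1(\mathcal{E}_\alpha)$ descends to the $\mathbb{Q}$-divisor $K_X+D+\alpha C$, so that $\tfrac{1}{|G|}c_1(\mathcal{E}_\alpha)^2=(K_X+D+\alpha C)^2=(K_X+D)^2+2\alpha(K_X+D)\cdot C+\alpha^2C^2$, while $\tfrac{1}{|G|}c_2(\mathcal{E}_\alpha)$ is the orbifold Euler number
\begin{equation*}
  e_{\mathrm{orb}}(X,D+\alpha C)=e_{X\setminus D}-\alpha\,e_{C\setminus D}+\tfrac{\alpha^2}{2}\bigl[C^2+(K_X+D)\cdot C+e_{C\setminus D}\bigr].
\end{equation*}
Substituting both expressions into $3c_2-c_1^2\ge 0$ and collecting terms reproduces \eqref{equation:maininequality} verbatim.

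The content of part \ref{item:maininequality} is thus the inequality $c_1(\mathcal{E}_\alpha)^2\le 3\,c_2(\mathcal{E}_\alpha)$, and this is the step I expect to be the main obstacle. I would establish it by proving that $\mathcal{E}_\alpha$ is semistable with respect to a polarisation in the direction of $K_X+D+\alpha C$ and then invoking the Miyaoka–Yau–Sakai inequality — the factor $3$, in place of the factor $4$ of the bare Bogomolov inequality, being precisely what the rank-two log-cotangent structure provides. Semistability is where the hypothesis $\kappa(X,K_X+D)\ge 0$ is used: a destabilising line subsheaf would, after descent to $X$, yield a subsheaf of the log cotangent bundle of slope incompatible with the $\mathbb{Q}$-effectivity of $K_X+D$, and I would rule this out by a Bogomolov–McQuillan foliation argument, treating separately the subsheaf tangent to the foliation traced out by $C$. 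Dividing the inequality obtained on $Y$ by $|G|$ and rearranging then gives \eqref{equation:maininequality}.

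Part \ref{item:mainsecondinequality} is then elementary. Setting $A=C^2+3(K_X+D)\cdot C+3e_{C\setminus D}$, $B=(K_X+D)\cdot C+\tfrac32 e_{C\setminus D}$ and $\Gamma=3e_{X\setminus D}-(K_X+D)^2$, inequality \eqref{equation:maininequality} reads $f(\alpha):=\tfrac{A}{2}\alpha^2-2B\alpha+\Gamma\ge 0$ for every $\alpha\in[0,1]$. I would check, by a direct computation using adjunction, that the two standing hypotheses place the vertex $\alpha^{\ast}=2B/A$ of this parabola inside $[0,1]$ with $A>0$: the clean identity $A-2B=C^2+(K_X+D)\cdot C=2p_a(C)-2+D\cdot C$ is nonnegative exactly because $C$ is not a smooth $D$-rational curve (Definition~\ref{definition:Drational}), which gives $\alpha^{\ast}\le 1$ once $A>0$ is known; the remaining requirements $A>0$ and $\alpha^{\ast}\ge 0$ follow from the lower bound $(K_X+D)\cdot C\ge-\tfrac32 e_{X\setminus D}$ combined with $\Gamma\ge 0$ (the case $\alpha=0$ of part \ref{item:maininequality}), by a short estimate I would carry out explicitly. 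Evaluating \eqref{equation:maininequality} at $\alpha=\alpha^{\ast}$ gives $f(\alpha^{\ast})=\Gamma-2B^2/A\ge 0$, and multiplying through by $A>0$ yields $2B^2-\Gamma A\le 0$, which is exactly \eqref{equation:mainsecondary}.
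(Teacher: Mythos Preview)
Your optimisation argument for part \ref{item:mainsecondinequality} is essentially the paper's: both identify the vertex $\alpha_0=2B/A$ of the quadratic in part \ref{item:maininequality}, check $0\le\alpha_0\le1$ via adjunction and the standing hypotheses, and evaluate. That part is fine.

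The gap is in part \ref{item:maininequality}. Your construction of $\mathcal{E}_\alpha$ presupposes that a Kawamata cover branched along $C$ produces a smooth $Y$ on which $(\pi^*\Delta)_{\mathrm{red}}$ is simple normal crossing; this is false when $C$ is singular, and the theorem explicitly places no restriction on the singularities of $C$ (indeed $(X,D+\alpha C)$ need not be log canonical). Relatedly, your formula $\tfrac{1}{|G|}c_2(\mathcal{E}_\alpha)=e_{X\setminus D}-\alpha e_{C\setminus D}+\tfrac{\alpha^2}{2}[C^2+(K_X+D)\cdot C+e_{C\setminus D}]$ is reverse-engineered so that $3c_2-c_1^2$ reproduces \eqref{equation:maininequality}; for the orbibundle that the paper actually constructs (after a log resolution $\widetilde{Y}\to X$ and a cover of $\widetilde{Y}$, not of $X$), $c_2$ is \emph{linear} in $\alpha$ with extra terms coming from the exceptional divisors, and $c_1=K_{\widetilde{Y}}+\widetilde{D}+E+\alpha\widetilde{C}$ is not $\pi^*(K_X+D+\alpha C)$ but differs from it by an exceptional combination $\sum(x_i-\alpha m_i)\overline{E}_i$. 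The whole difficulty of the proof is to absorb these exceptional discrepancies. The paper does this in two moves you are missing: first a \emph{nef reduction} $\widetilde{\mathcal{E}}_\alpha\subset\mathcal{E}_\alpha$ (built from the Zariski decomposition of $c_1(\mathcal{E}_\alpha)$ with support in part of the exceptional locus) which keeps $c_2$ unchanged but replaces $c_1$ by its $(G+F'')$-nef part; second, a term-by-term comparison of the remaining exceptional contributions with the singularity terms in the adjunction identity $(K_X+D)\cdot C+C^2=-e_{C\setminus D}+\sum m_i(m_i-x_i+1)+\cdots$, using the finer form $3c_2-c_1^2+\tfrac14\overline{N}^2\ge0$ of the BMY inequality for subsheaves of log differentials. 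Only after these estimates do the exceptional sums collapse to $\tfrac{3}{2}\alpha^2[(K_X+D)\cdot C+C^2+e_{C\setminus D}]$, giving \eqref{equation:maininequality}. Your semistability sketch does not engage with any of this, and without the log resolution and the nef reduction the argument does not go through for singular $C$.
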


\begin{remark}
  Observe that in case $D$ is the zero divisor then $ -\frac{1}{2} e_{C\setminus
  D} = g-1$, where as usual $g$ denotes the geometric genus of $C$. It is clear
  then that in this case Theorem \ref{item:maininequality} is a generalization
  of \cite[Theorem 1.3 (i), (ii)]{miyaoka-orbibundle}, in which 
  $ -\frac{1}{2} e_{C\setminus D} $ plays the role of $g-1$.  
  In case $\alpha = 0$,
  Theorem \ref{item:maininequality} coincides with 
  \cite[Theorem (7.6)]{sakai-semistable}.
\end{remark}

It is worth noting that Theorem \ref{item:maininequality} is not a direct
consequence of a general Bogomolov-Miyaoka-Yau inequality in
the form of 
\cite[Theorem 0.1]{langer-orbifold}. Indeed we do not impose any 
restriction on the
singularities of the curve $C$, hence $(X, D+\alpha C)$ may not be log
canonical. Though Theorem \ref{theorem:main} suits our present
needs, it seems then natural to ask whether or not it is possible to prove,
by a modification of the argument provided here, a version of Theorem
\ref{item:maininequality} for a more general couple $(X, B)$, where
$B=\sum_{i}\beta_i B_i$, $\beta_i\in [0,1]$ rational,
and the couple may have worse singularities then log canonical. 
We will address this question in a successive paper. 

Given Theorem \ref{theorem:main} a direct argument will lead us to: 

\begin{theorem}
  \label{theorem:bound}
  Let $X$ be a smooth projective surface, $D$ be a simple normal crossing divisor on 
  $X$ and $C$ an irreducible curve on $X$ not contained in $D$. 
  Suppose moreover that $K_X + D$ is $\mathbb{Q}$-effective. 
  \begin{enumerate}[label = (\roman*),ref = {\thetheorem .(\roman*)}]
    \item \label{item:uppercanonicalbound} 
      If $K_X+D$ is nef, $(K_X + D)^2>0$, $(K_X + D
      )^2 > e_{X\setminus D}$ and moreover $C$ is not a smooth $D$-rational
      curve then the
      relative canonical degree of $C$ is bounded by:
      \begin{equation}\label{equation:uppercanonicalbound}
	(K_X+D)\cdot C \le A \left(- \dfrac{1}{2} e_{C\setminus D} \right) + B\ , 
      \end{equation}
      where $A$, $B$ depends only on $( K_X + D )^2$,
      $e_{X\setminus D}$ and can be chosen as:
      \begin{gather*}
	A = \dfrac{2 (K_X + D)^2 + 
	  \sqrt{2 (K_X + D)^2 \left( 3e_{X\setminus D} - \left( K_X + D
	\right)^2 \right)}}{ \left( K_X + D \right)^2 - 
	e_{X\setminus D} } \ , \\
	B = \dfrac{(K_X + D)^2 
	  \left( 3e_{X\setminus D} - (K_X + D)^2 \right) 
	  + 2 e_{X\setminus D} \sqrt{ 2 (K_X + D)^2 
	    \left( 3e_{X\setminus D} - (K_X + D)^2 
	\right) }}{2 \left( 
	    (K_X + D)^2 - e_{X \setminus D}
	\right)}  \ .
      \end{gather*} 
    \item \label{item:uppercanonicalboundsmooth} 
      If $K_X+D$ is nef, $C$ is smooth 
      and $D$ and $C$ intersects transversally  
      then the relative canonical degree of $C$ is bounded by:
      \begin{gather}
	\label{equation:canonicalsmooth}
	(K_X +D) \cdot C \le 
	- \frac{3}{2} e_{C\setminus D} + 
	\dfrac{
	  \sqrt{3 e_{X\setminus D} - (K_X + D)^2 } 
	  \sqrt{-2 e_{C\setminus D}  + 3e_{X\setminus D} - 
	(K_X + D)^2 }}{2} + \\
	\dfrac{3e_{X\setminus D} - (K_X + D)^2 }{2} \ ,
      \end{gather}
      if $C$ is not a smooth $D$-rational curve. If
      $C$ is a smooth $D$-rational curve, namely it is 
      isomorphic to $ \mathbb{P}^1$ and $D.C \le 1$, 
      then the relative canonical degree of $C$ is bounded by:
      \begin{equation}\label{MichaelsEqn}
	(K_X+D)\cdot C \le   3e_{X\setminus D} - (K_X+D)^2 -3. 
      \end{equation}
  \end{enumerate}
\end{theorem}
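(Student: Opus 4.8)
The plan is to treat both inequalities of Theorem \ref{theorem:main} as quadratic constraints and to solve them for the relative canonical degree $(K_X+D)\cdot C$. Throughout write $P=(K_X+D)^2$, $e=e_{X\setminus D}$ and $d=(K_X+D)\cdot C$; the hypothesis $P>e$ makes $P-e$, which sits in every denominator of $A$ and $B$, strictly positive. The only quantity in \eqref{equation:maininequality} and \eqref{equation:mainsecondary} not yet expressed through $d$, $e_{C\setminus D}$ and the fixed invariants is the self-intersection $C^2$, and the two parts of the theorem differ exactly in how this term is controlled.

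For part \ref{item:uppercanonicalbound} I would start from the discriminant inequality \eqref{equation:mainsecondary}. First I check that it applies: since $K_X+D$ is nef we have $d\ge 0$, and in the range where the stated $A,B$ are real one has $3e-P\ge 0$, hence $e>0$ and $d\ge 0\ge -\tfrac32 e$, so the extra hypothesis of \ref{item:mainsecondinequality} is automatic while $C$ is assumed not to be smooth $D$-rational. Next I eliminate $C^2$ by the Hodge index inequality: as $K_X+D$ is nef with $P>0$, one has $C^2\le d^2/P$, and because $C^2$ enters \eqref{equation:mainsecondary} multiplied by the nonnegative factor $3e-P$, replacing it by $d^2/P$ preserves the inequality. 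The outcome is a quadratic inequality in $d$ whose leading coefficient equals $3(P-e)/P>0$; therefore $d$ lies below the larger root, and computing that root yields \eqref{equation:uppercanonicalbound}.

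For part \ref{item:uppercanonicalboundsmooth}, when $C$ is smooth and meets $D$ transversally the self-intersection is known exactly rather than merely estimated: adjunction together with $e_{C\setminus D}=e_{top}(C)-D\cdot C$ gives the identity $C^2=-e_{C\setminus D}-(K_X+D)\cdot C$. Substituting this into \eqref{equation:mainsecondary} collapses the bracket $C^2+3(K_X+D)\cdot C+3e_{C\setminus D}$ to $2\bigl(d+e_{C\setminus D}\bigr)$, so the discriminant inequality becomes a monic quadratic in $d$ whose discriminant is the transparent product $(3e-P)\bigl(3e-P-2e_{C\setminus D}\bigr)$; its larger root is precisely \eqref{equation:canonicalsmooth}. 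A smooth $D$-rational curve is excluded from \eqref{equation:mainsecondary}, so for that case I instead return to \eqref{equation:maininequality}, whose binding constraint on $d$ over $\alpha\in[0,1]$ comes from the endpoint $\alpha=1$; evaluating there and again inserting $C^2=-e_{C\setminus D}-(K_X+D)\cdot C$ and $D\cdot C\le 1$ produces the constant bound \eqref{MichaelsEqn}.

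I expect the main obstacle to be the algebraic bookkeeping in part \ref{item:uppercanonicalbound}: the larger root obtained after the Hodge index substitution carries a square root of a quadratic expression in $e_{C\setminus D}$, whereas the stated bound is linear in $e_{C\setminus D}$. Reconciling the two requires the relaxation $\sqrt{2e_{C\setminus D}^2-4e\,e_{C\setminus D}+P(3e-P)}\le\sqrt2\,\bigl(2e-e_{C\setminus D}\bigr)$, and one must verify both that the right-hand side is nonnegative and that squaring is legitimate, which reduces to $P(3e-P)\le 4e\bigl(2e-e_{C\setminus D}\bigr)$. This holds automatically once $e_{C\setminus D}\le 0$, the case of positive geometric genus, and has to be checked in the remaining range for non-$D$-rational curves; keeping the sign of $3e-P$ and the direction of each estimate straight is where the care lies, after which the closed forms of $A$ and $B$ drop out.
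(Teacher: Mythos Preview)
For part \ref{item:uppercanonicalbound} and the non-$D$-rational half of part \ref{item:uppercanonicalboundsmooth} your approach coincides with the paper's. In the normalized variables $x=d/P$, $\sigma=e/P$, $\gamma=-e_{C\setminus D}/(2P)$ the paper obtains from \eqref{equation:mainsecondary} and Hodge index exactly the quadratic constraint $\mathcal{P}(x)\ge 0$ you describe, and your proposed relaxation $\sqrt{2e_{C\setminus D}^2-4e\,e_{C\setminus D}+P(3e-P)}\le\sqrt{2}(2e-e_{C\setminus D})$ is the paper's inequality $(3\sigma-1)/(8\gamma^2)\le\sigma^2/\gamma^2$, i.e.\ $3\sigma-1\le 8\sigma^2$, rewritten. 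One point to tighten: the hypothesis actually needed to invoke \eqref{equation:mainsecondary} is $d\ge-\tfrac{3}{2}e_{C\setminus D}$ (not $e_{X\setminus D}$; the paper's statement has a typo), and this is \emph{not} automatic from nefness when $e_{C\setminus D}<0$. The paper handles this with a case split: if $x\le 3\gamma$ then already $x\le 3\gamma\le R_+(\sigma,\gamma)$, so the bound holds trivially.

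There is, however, a genuine gap in your treatment of the smooth $D$-rational case. Substituting $C^2=-e_{C\setminus D}-d$ into \eqref{equation:maininequality} at $\alpha=1$ yields only
\[
d\le 3e_{X\setminus D}-(K_X+D)^2-2e_{C\setminus D},
\]
and when $D\cdot C=1$ one has $e_{C\setminus D}=1$, giving $d\le 3e-P-2$, one unit short of \eqref{MichaelsEqn}. Optimizing over $\alpha$ does not help: the right-hand side above is, for each fixed $d$, the minimum over $\alpha\in(0,1]$ of the bound extracted from \eqref{equation:maininequality}. The paper's argument for \eqref{MichaelsEqn} is of an entirely different nature and cannot be recovered from Theorem \ref{theorem:main} alone: one contracts the $(-m)$-curve $C$ to a cyclic quotient singularity, applies Langer's orbifold Bogomolov--Miyaoka--Yau inequality to the contracted pair $(X_0,D^0)$, and then uses integrality of $3e_{X\setminus D}-(K_X+D)^2$ to sharpen $3e-P\ge t^2/(t+1)+3$ to $3e-P\ge t+3$.
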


Theorem \ref{item:uppercanonicalbound} corresponds to \cite[Theorem
1.1]{miyaoka-orbibundle} and Theorem \ref{item:uppercanonicalboundsmooth}
corresponds to \cite[Corollary 1.4]{miyaoka-orbibundle}.
In analogy with this last result, 
for a smooth curve $C$ that meets $D$ transversally and such that 
$-e_{C\setminus D}$ is very large, the relative canonical degree is bounded by a
function that asymptotically behaves like $\frac{3}{2} \left(- e_{C\setminus
D}\right)$.
As remarked in \cite{mcquillan-mixed}, considerations of differential geometric
nature suggest that, in such bounds on the canonical degree, 
good choices for the constant in front
of $ e_{C\setminus D}$ are the reciprocal of 
either $- \frac{2}{3}$ or $- \frac{1}{2}$, the  
holomorphic sectional curvature of the K\"ahler-Einstein metric of balls and
bi-discs, respectively. It turns out that in the algebraic geometric setting,  
see \cite{gasbarri-canonicaldeg}, $- \frac{1}{2}$ is optimal, in particular
taking into account singular curves. For smooth curves 
$- \frac{2}{3} $ seems the right choice, at least asymptotically. 

\begin{corollary}[Uniform bound of the relative canonical degree]
  \label{corollary:bound}
  Let $X$ be a smooth projective surface, $D$ be a simple normal 
  crossing divisor on 
  $X$ and $C$ an irreducible curve on $X$ not contained in $D$. 
  If $ K_X + D$
  is nef and big and moreover $( K_X + D )^2>
  e_{X\setminus D}$ then the relative canonical degree of $C$ is
  bounded by
  \begin{equation*}
    ( K_X + D )\cdot C \le A \left(- \dfrac{1}{2} e_{C \setminus D} 
    \right) + B
  \end{equation*}
  where $A$,
  $B$ depend only on $e_{X\setminus D}$ and $( K_X + D )^2$.
\end{corollary}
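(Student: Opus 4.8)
The plan is to reduce the statement entirely to Theorem \ref{theorem:bound} by a case analysis on the nature of $C$. First I would record that the standing hypotheses already furnish everything Theorem \ref{theorem:bound} requires: since $K_X+D$ is nef, bigness is equivalent to $(K_X+D)^2>0$, and bigness also gives $\kappa(X,K_X+D)\ge 0$, so $K_X+D$ is $\mathbb{Q}$-effective. Together with the assumption $(K_X+D)^2>e_{X\setminus D}$ this matches exactly the hypotheses of Theorem \ref{item:uppercanonicalbound}. The only discrepancy between that theorem and the corollary is that Theorem \ref{item:uppercanonicalbound} excludes smooth $D$-rational curves, so the argument splits according to whether or not $C$ is such a curve.

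If $C$ is not a smooth $D$-rational curve, Theorem \ref{item:uppercanonicalbound} applies verbatim and yields the desired inequality with the explicit constants $A$ and $B$ displayed there, both of which depend only on $(K_X+D)^2$ and $e_{X\setminus D}$. So in this case there is nothing left to do.

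It remains to treat a smooth $D$-rational curve $C$, which is where the small amount of genuine work lies. By Definition \ref{definition:Drational} such a $C$ is isomorphic to $\mathbb{P}^1$, meets $D$ transversally, and satisfies $D\cdot C\le 1$. Hence $\eta$ is an isomorphism, $e_{top}(\widetilde{C})=2$, and transversality gives $\sharp(\eta^{-1}(D))=D\cdot C\in\{0,1\}$, so that $e_{C\setminus D}=2-D\cdot C\in\{1,2\}$ and in particular $-\tfrac12 e_{C\setminus D}\ge -1$. On the other hand $C$ is smooth and meets $D$ transversally, so the smooth $D$-rational sub-case of Theorem \ref{item:uppercanonicalboundsmooth} gives the constant bound $(K_X+D)\cdot C\le 3e_{X\setminus D}-(K_X+D)^2-3$. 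Since $A>0$ (its denominator is positive by $(K_X+D)^2>e_{X\setminus D}$ and its numerator is manifestly positive), replacing $B$ by $B':=\max\{B,\ 3e_{X\setminus D}-(K_X+D)^2-3+A\}$ --- still a function of $(K_X+D)^2$ and $e_{X\setminus D}$ alone --- makes the inequality hold for these curves too: even in the extremal case $-\tfrac12 e_{C\setminus D}=-1$ one has $A\left(-\tfrac12 e_{C\setminus D}\right)+B'\ge -A+B'\ge 3e_{X\setminus D}-(K_X+D)^2-3\ge (K_X+D)\cdot C$.

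The whole Bogomolov-Miyaoka-Yau content has already been spent in Theorem \ref{theorem:main} and Theorem \ref{theorem:bound}; the only obstacle for the corollary is cosmetic, namely that the bound for smooth $D$-rational curves is a constant rather than an affine function of $-\tfrac12 e_{C\setminus D}$. This is harmless precisely because $e_{C\setminus D}$ is pinned to $\{1,2\}$ for such curves, so the constant can be absorbed into $B$ without disturbing its dependence on $(K_X+D)^2$ and $e_{X\setminus D}$ only. Taking the common pair $(A,B')$ then delivers a single uniform bound valid for every irreducible $C\not\subset D$, which is the assertion of the corollary.
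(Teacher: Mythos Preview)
Your argument is correct and is precisely the deduction the paper has in mind: the corollary is stated immediately after Theorem~\ref{theorem:bound} without a separate proof, being regarded as a direct consequence of parts \ref{item:uppercanonicalbound} and \ref{item:uppercanonicalboundsmooth}. Your case split --- applying \ref{item:uppercanonicalbound} when $C$ is not smooth $D$-rational, and absorbing the constant bound \eqref{MichaelsEqn} for smooth $D$-rational curves into $B$ using that $e_{C\setminus D}\in\{1,2\}$ for such curves --- is exactly the intended reading, and your verification that $A>0$ (so the adjustment $B'=\max\{B,\,3e_{X\setminus D}-(K_X+D)^2-3+A\}$ works) is the only detail one needs to supply.
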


By the above bound on the canonical degree
it follows that curves for which $ - e_{C\setminus D} $ is
fixed form a bounded family. In particular: 

\begin{corollary} \label{corollaryp:boundedfamily}
  Let $X$ be a smooth projective surface and $D$ a simple normal
  crossing divisor on $X$. 
  Suppose that $ K_X + D$
  is nef and big and moreover that $( K_X + D )^2>
  e_{X\setminus D}$. Then, curves $C$ on $X$ that are not contained in $D$ and
  such that $- e_{C\setminus D}$ is
  bounded form a bounded family, where the 
  number of components is bounded by a function that
  depends on $(K_X + D)^2$ and
  $e_{X\setminus D}$. In particular on $X$ there are only a finite
  number of curves $C$ such that $-e_{C\setminus D}\le 0$ and their 
  number is bounded by a function of $(K_X + D)^2$ and
  $e_{X\setminus D}$.
\end{corollary}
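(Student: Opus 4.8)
The plan is to deduce both assertions from the degree bound of Corollary~\ref{corollary:bound}, combined with the elementary fact that on a fixed projective surface the irreducible curves of bounded degree with respect to a fixed ample divisor form a bounded family. Set $L := K_X+D$, which by hypothesis is nef and big with $L^2 > e_{X\setminus D}$. First I would bound the relative canonical degree: Corollary~\ref{corollary:bound} gives $L\cdot C \le A(-\tfrac{1}{2} e_{C\setminus D}) + B$ for every irreducible $C\not\subseteq D$, where $A>0$ and $A,B$ are functions of $L^2$ and $e_{X\setminus D}$ only. Restricting to the curves with $-e_{C\setminus D}\le N$, this yields $0 \le L\cdot C \le \tfrac{A}{2}N + B =: M$, the lower bound being nefness of $L$. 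Since $L^2>0$, the Hodge index theorem furthermore gives $C^2 \le (L\cdot C)^2/L^2 \le M^2/L^2$, so $C^2$ is bounded above, and with it (via adjunction) the arithmetic genus.

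Next I would pass from the bound on $L\cdot C$ to a bound with respect to a genuine ample divisor. As $L$ is big and nef, Kodaira's lemma furnishes an integer $m>0$ and a decomposition $mL \sim A_0 + E_0$ with $A_0$ ample and integral and $E_0$ effective. For every irreducible $C$ that is not one of the finitely many components of $E_0$ we have $E_0\cdot C \ge 0$, hence $A_0\cdot C \le mL\cdot C \le mM$. Thus, away from the finitely many components of $E_0$ and of $D$, all the curves under consideration have bounded $A_0$-degree and therefore form a bounded family; together with the bound on $C^2$ this confines them to finitely many components of the Chow variety, whose number is bounded in terms of $A_0$, $m$ and $M$, hence ultimately in terms of $L^2$, $e_{X\setminus D}$ and $N$. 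Adjoining the finitely many excluded curves establishes the first assertion.

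For the final statement I take $N=0$. By Definition~\ref{notation:relativepoincare} the inequality $-e_{C\setminus D}\le 0$ reads $2g(C)-2+\sharp(\eta^{-1}(D))\le 0$, so $C$ is either a rational curve meeting $D$ in at most two points or an elliptic curve disjoint from $D$; in each case the pair $(C,D|_C)$ has non-positive log-Kodaira dimension. By the first part these curves form a bounded family, so if there were infinitely many they would sweep out a positive-dimensional family. A positive-dimensional family of pairwise distinct irreducible curves on a surface must cover $X$: otherwise its members would all lie in a fixed curve $V$ and hence be among the finitely many components of $V$. Thus $X$ would be covered by a family whose general member is log-rational or log-elliptic, forcing $\kappa(X,K_X+D)\le 1$ by the logarithmic easy-addition inequality applied to the fibration swept out by the family; this contradicts the bigness of $K_X+D$, which gives $\kappa(X,K_X+D)=2$. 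Hence the family is zero-dimensional and the curves with $-e_{C\setminus D}\le 0$ are finite in number.

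The main obstacle is exactly this last step, the exclusion of positive-dimensional families in the case $N=0$. The numerical inequalities of Theorem~\ref{theorem:main} are by themselves consistent with a moving family: a covering family forces only $C^2\ge 0$, which violates neither \eqref{equation:maininequality} nor \eqref{equation:mainsecondary}. The contradiction must therefore come from the geometric input that a surface of log-general type admits no covering family of curves of non-positive log-Kodaira dimension, and making this log-uniruledness obstruction precise — distinguishing the fibration case $C^2=0$ from the big case $C^2>0$ and invoking subadditivity of the log-Kodaira dimension — is the delicate point; the remainder is the standard boundedness of curves of bounded ample degree.
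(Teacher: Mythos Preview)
The paper does not supply a proof of this corollary beyond the one-line remark preceding it that boundedness ``follows'' from the degree bound, so your outline is necessarily more detailed than anything in the text; in shape it matches what the author intends. Two points deserve flagging.

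First, the uniformity assertion --- that the number of components depends only on $(K_X+D)^2$ and $e_{X\setminus D}$ --- is not delivered by your Kodaira-lemma step: the ample divisor $A_0$, the multiple $m$, and the effective part $E_0$ all depend on the particular pair $(X,D)$, so the phrase ``hence ultimately in terms of $L^2$, $e_{X\setminus D}$ and $N$'' is exactly where your argument stops short. The paper is aware of this and handles it in the Remark immediately following the corollary by citing Alexeev's boundedness of log surfaces with bounded $(K_X+D)^2$: once the pairs themselves live in a bounded family, a uniform polarization exists and your count becomes uniform. Without that input you get boundedness on the fixed surface but not the claimed dependence of the bound on the two invariants alone.

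Second, your self-diagnosis of the finiteness step is accurate: the numerical inequalities by themselves do not exclude a covering family, and the geometric input --- that a surface of log general type carries no covering family of curves whose normalized complement of $D$ has non-positive log Kodaira dimension --- is the real content. Your sketch via log easy addition is the right mechanism; to make it airtight you should also note that the condition $-e_{C\setminus D}\le 0$ is constructible on the parameter space (so that infinitely many such curves genuinely produce a positive-dimensional family whose \emph{general} member still satisfies it), and then treat the fibration case $C^2=0$ and the dominating case $C^2>0$ separately, as you already anticipate. The paper does not spell any of this out, so you are not missing anything that actually appears in the text.
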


\begin{remark} 
  In view of the hypotheses of Corollary \ref{corollaryp:boundedfamily}, a bound
  on the log canonical degree of curves translates in an analogous bound on
  their
  degree with respect to any fixed ample divisor. The same bound, given its
  nature, holds uniformly in a family of deformations of the surface $X$ too. It is
  worth noting that since surfaces of log general type with $(K_X+D)^2$ bounded
  are bounded, see \cite[Theorem 7.7]{alexeev-boundedness} for instance, hence 
  the conclusions of Corollary \ref{corollaryp:boundedfamily} hold in the most
  general sense. 
\end{remark}

\begin{remark}
  Let $\widetilde{C}$ be the normalization of $C$, $\eta\colon
  \widetilde{C} \rightarrow C$ the corresponding map and consider
  $\eta^{-1}(D)$ as a closed set. By definition of
  $e_{C\setminus D}$, if $-e_{C\setminus D}\le 0$ then the geometric genus of
  $C$ is less than or equal to
  one and there are only four possibilities 
  for the open set $\widetilde{C} \setminus \eta^{-1}(D)$, namely
  \begin{equation*}
    \widetilde{C} \setminus \eta^{-1}(D) \cong \begin{cases}
      \mathbb{P}^1 \\
      \mathbb{A}^1 \\
      \mathbb{A}^1\setminus \left\{ pt \right\}  \\
      \textrm{Elliptic curve}
    \end{cases}
  \end{equation*}
  hence Corollary \ref{corollaryp:boundedfamily} generalizes
  \cite[Corollary 1.2]{miyaoka-orbibundle}.
\end{remark}

We end the series of results with the following Corollary, where we apply
Theorem \ref{item:uppercanonicalbound} to an elementary (i.e. that can be
formulated in elementary terms) situation in $
\mathbb{P}^2$. 

\begin{corollary} \label{corollary:p1example}
  Let $D_1$, $D_2$, $C$ be distinct irreducible curves in $ \mathbb{P}^2$
  of degree
  $d_1$, $d_2$ and $d$, respectively, such that $D = D_1+D_2$ is a
  simple normal crossing divisor.  
  Denote by $g$ the geometric genus of $C$, consider $\widetilde{C}$ the
  normalization of $C$, $\eta\colon
  \widetilde{C} \rightarrow \mathbb{P}^2$ the induced map and define 
  \begin{equation*}
    m := \min_{ p \in \widetilde{C} \cap \eta^{-1}(D)} 
    \left\{ \mathrm{mult}_{p}\left(
    \eta^*D \right) \right\}\ .
  \end{equation*}
  Suppose that $d\ge d_2 \ge d_1 > 0$ and set
  \begin{equation*}
    \lambda = \frac{d_1}{d_2}\ , \ \nu= \frac{d}{d_2}\ . 
  \end{equation*}
  There exist constants $\lambda_0$, $ \frac{2}{3} < \lambda_0 < 1 $,
  $\mathrm{h}$ and $\mathrm{k}$ such that 
  if $d_2\ge 6$, $\lambda \ge \lambda_0$ and 
  \begin{equation}
    \nu >  \frac{\mathrm{h} g+\mathrm{k}}{\left( \frac{\lambda_0}{2} - 
      \frac{1}{3} \right) \left(
    \frac{\lambda_0}{2} + \frac{1}{4} \right) } 
  \end{equation}
  then 
  \begin{equation}
    m \le \left \lfloor \dfrac{50}{
    \left( \frac{\lambda}{2} - \frac{1}{3} \right)}\right \rfloor \ .
  \end{equation}
\end{corollary}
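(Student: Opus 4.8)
The plan is to apply Theorem \ref{item:uppercanonicalbound} to the curve $C$ in $\mathbb{P}^2$ with the boundary divisor $D = D_1 + D_2$, compute all the relevant intersection-theoretic invariants explicitly in terms of $d_1, d_2, d, g, m$, and then unwind the resulting bound into the elementary inequality on $m$. Let me sketch the computations I would carry out and where the main difficulty lies.

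\emph{First, set up the invariants.} On $X = \mathbb{P}^2$ we have $K_X = \mathcal{O}(-3)$, so $K_X + D = \mathcal{O}(d_1 + d_2 - 3)$. Writing $\delta := d_1 + d_2$, I would record $(K_X + D)^2 = (\delta - 3)^2$ and $(K_X + D) \cdot C = d(\delta - 3)$. Next I would compute $e_{X \setminus D}$. Since $D = D_1 + D_2$ is a simple normal crossing divisor, $e_{X \setminus D} = e(\mathbb{P}^2) - e(D_1) - e(D_2) + e(D_1 \cap D_2)$. By Bézout $D_1 \cap D_2$ consists of $d_1 d_2$ points (counted correctly for an snc divisor), and $e(D_i) = 2 - 2g(D_i)$ by the genus formula on $\mathbb{P}^2$; this gives $e_{X\setminus D}$ as an explicit polynomial in $d_1, d_2$. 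The more delicate quantity is $e_{C\setminus D}$: by Definition \ref{notation:relativepoincare}, $e_{C\setminus D} = e_{top}(\widetilde{C}) - \sharp(\eta^{-1}(D))$, where $e_{top}(\widetilde{C}) = 2 - 2g$. Here I must bound $\sharp(\eta^{-1}(D))$ from above using the quantity $m$: since $m$ is the \emph{minimal} multiplicity of $\eta^* D$ at a point of the preimage, and $\deg(\eta^* D) = D \cdot C = d(d_1 + d_2) = d\,\delta$ by Bézout, the number of distinct preimage points satisfies $\sharp(\eta^{-1}(D)) \le d\,\delta / m$. This yields the key estimate $-\tfrac{1}{2} e_{C\setminus D} \le g - 1 + \tfrac{d\,\delta}{2m}$.

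\emph{Second, feed these into the bound.} Theorem \ref{item:uppercanonicalbound} gives $(K_X+D)\cdot C \le A\left(-\tfrac{1}{2} e_{C\setminus D}\right) + B$ with $A, B$ depending only on $(K_X+D)^2 = (\delta-3)^2$ and $e_{X\setminus D}$. Substituting $(K_X+D)\cdot C = d(\delta - 3)$ on the left and the upper bound for $-\tfrac12 e_{C\setminus D}$ on the right, I obtain
\begin{equation*}
  d(\delta - 3) \le A\left( g - 1 + \frac{d\,\delta}{2m} \right) + B.
\end{equation*}
Solving this for $m$ gives $m \le \dfrac{A\, d\, \delta}{2\left(d(\delta-3) - A(g-1) - B\right)}$, valid provided the denominator is positive. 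The normalizations $\lambda = d_1/d_2$, $\nu = d/d_2$ are introduced precisely so that, after dividing numerator and denominator by appropriate powers of $d_2$, the leading behaviour of $A$ as $d_2 \to \infty$ can be extracted: one checks that $A = \tfrac{2(\delta-3)^2 + \sqrt{2(\delta-3)^2(3e_{X\setminus D} - (\delta-3)^2)}}{(\delta-3)^2 - e_{X\setminus D}}$ tends to a finite limit governed by the ratio $e_{X\setminus D}/(\delta-3)^2$, which in turn is controlled by $\lambda$.

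\emph{Third, extract the uniform constants.} The final inequality $m \le \lfloor 50 / (\tfrac{\lambda}{2} - \tfrac13)\rfloor$ has a clean shape because the combination $\tfrac{\lambda}{2} - \tfrac13$ emerges as (a lower bound for) the limiting coefficient of $d$ in the denominator $d(\delta-3) - \tfrac{A\,d\,\delta}{2m}\cdots$ once the $A$-term on the right is rearranged. I would define $\lambda_0 \in (\tfrac23, 1)$ as the threshold beyond which the relevant coefficients stay uniformly bounded below, and absorb the genus-dependent terms $A(g-1) + B$ into the hypothesis $\nu > (\mathrm{h}g + \mathrm{k})/((\tfrac{\lambda_0}{2} - \tfrac13)(\tfrac{\lambda_0}{2}+\tfrac14))$, which is exactly what is needed to make the denominator large enough to force the constant $50$ in the numerator. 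The hypothesis $d_2 \ge 6$ ensures $\delta - 3 > 0$ so that $K_X + D$ is big and nef and Theorem \ref{item:uppercanonicalbound} applies.

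\emph{The main obstacle} will be the bookkeeping in the third step: verifying that the explicit but unwieldy expressions for $A$ and $B$, after substituting the polynomial formulas for $e_{X\setminus D}$ and dividing through by $d_2$, admit uniform bounds in terms of $\lambda, \nu, g$ that collapse into the stated constants $50$, $\tfrac{\lambda}{2}-\tfrac13$, $\tfrac{\lambda_0}{2}+\tfrac14$. In particular one must check that under $\lambda \ge \lambda_0 > \tfrac23$ and $d_2 \ge 6$ the quantity $3e_{X\setminus D} - (\delta-3)^2$ stays positive (so the square roots are real) and that the ratio controlling $A$ stays bounded away from the values that would blow up the estimate. The choice of the specific numerical constants is essentially an optimization, and I expect the bulk of the work to be a careful asymptotic analysis of $A$ and $B$ as functions of $\lambda$ and $d_2$, confirming that $\tfrac23$ is the correct critical value for $\lambda_0$.
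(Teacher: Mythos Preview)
Your proposal is correct and follows essentially the same route as the paper: compute $(K_X+D)^2$, $(K_X+D)\cdot C$, $e_{X\setminus D}$ explicitly, bound $-\tfrac12 e_{C\setminus D}\le g-1+\tfrac{d(d_1+d_2)}{2m}$ via the minimal multiplicity, plug into Theorem~\ref{item:uppercanonicalbound}, and then bound $A$, $B$ uniformly in $\lambda$ after dividing through by $d_2^2$. One small correction: the hypothesis $d_2\ge 6$ is used not merely to make $\delta-3>0$ but, together with $\lambda>\tfrac23$, to guarantee $(K_X+D)^2-e_{X\setminus D}=d_1d_2-3(d_1+d_2)+6>0$, which is the crucial hypothesis of Theorem~\ref{item:uppercanonicalbound} and the source of the factor $\tfrac{\lambda}{2}-\tfrac13$ in the denominators.
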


The statement of Corollary \ref{corollary:p1example} may result a bit obscure 
at a first reading, but basically its content is the following.
After arranging the degrees of $D$ in such a way that the
hypotheses of Theorem \ref{item:uppercanonicalbound} are satisfied, if the
degree of $C$ is sufficiently large, then the order of tangency between $C$ and
$D$ can not be everywhere too high. Note moreover that we need $D$ to have at
least two components to guarantee the required flexibility in order to
obtain a couple
$(X,D)$ of log general type, minimal and such that $(K_{ \mathbb{P}^2 } + D)^2
- e_{\mathbb{P}^2\setminus D} > 0$. 
In contrast to the elementary nature of
the statement we are not aware of any elementary proof. Details of the
proof are provided at the end of \S \ref{section:final}.

\section{Preliminaries}
In the present section, for reader's convenience, we gather a number of
results that we will use during the course of our proofs. For the sake of
clarity we state them in the form more suitable to our needs. Where it is
possible without complicating the discussion, we provide short proofs, if they
are not
available elsewhere or if they provide a way to quickly gain insight on the
particular topic.  

\subsection{Zariski decomposition with support in a negative cycle}
The Zariski decomposition was introduced in \cite[\S 7]{zariski-rr}
and its proof involves a rather elementary although lengthy argument in linear
algebra and quadratic forms. Following Miyaoka, 
see \cite[\S 2]{miyaoka-orbibundle}, 
our argument will rely on a slight
modification of the classical Zariski decomposition. Basically we
require the support of the negative part to be contained in a fixed
negative definite cycle. 
The existence of the Zariski decomposition with support can be proved 
following, with minor modifications, the original argument of Zariski. 
Zariski constructs the negative part of the decomposition
and then as a consequence the positive part, but given the fact that the positive
part can be interpreted as a solution to a maximization problem (this was
already remarked for instance by Kawamata in \cite[Proposition (1.5) and
(1.6)]{kawamata-noncompletesurfaces}), 
it turns out that it is much easier
to start by constructing the positive part and then deduce the existence of the
Zariski decomposition, see for instance
\cite{bauer-simple_zariski} and \cite{bauer-ssimple_zariski} for an even
more elementary exposition. Following this approach we
are going to summarize results regarding
Zariski decomposition with support and its relation with the classical one. 
It is worth noting that a similar discussion is contained in 
\cite{laface-zariski_support}, nonetheless we prefer to briefly summarize it here in a
way more convenient for our needs.

\begin{notation}
  We denote by $\preccurlyeq$ the partial order on
  $\mathrm{Div}_{\mathbb{R}}\left( X\right) $ given by $D_1 \preccurlyeq
  D_2$ if $D_2 - D_1$ is effective.
\end{notation}

\begin{definition}
  Let $E_1,\ldots,E_l$ be irreducible curves on $X$, the cycle
  $E=\sum_{i=1}^{l} E_i$ is said to be \emph{negative definite} if the
  intersection matrix relative to $E$, $\left( E_i\cdot E_j \right)_{ij}$, is
  negative definite \footnote{Note that if $E$ is negative definite then
  the curves $E_i$ must be distinct.}. In order to simplify the exposition
  we will consider the trivial cycle negative definite. 
\end{definition}

\begin{proposition} \label{proposition:zariskidecomposition}
  Let $D\in \mathrm{Div}_{\mathbb{Q}}\left( X \right)$ be effective and $E =
  \sum_{i=1}^{l} E_i $ be a negative definite cycle
  then there exist 
  $P_E(D),\ N_E(D) \in  \mathrm{Div}_{\mathbb{Q}}\left( X \right)$ such that:
  \begin{enumerate}[label = (\roman*),ref = {\thetheorem .(\roman*)}]
    \item \label{item:zariskidec1} $P_E(D)$ and $N_E(D)$ are effective and 
      $D = P_E(D) + N_E(D)$.
    \item $P_E(D)$ is nef on $E$ namely $P_E(D)\cdot E_i\ge 0 $ for every
      $i=1,\ldots,l$.
    \item The support of $N_E(D)$ is contained in $E$, namely $N_E(D) =
      \sum_{i=1}^{l} a_i E_i$, $a_i\ge 0$ for $i=1,\dots , l$.
    \item $P_E(D)$ is numerically trivial on $N_E(D)$, namely $P_E(D)\cdot
      E_i = 0$
      for
      every prime component $E_i$ in the support of $N_E(D)$. It follows that
      $P_E(D) \cdot N_E(D) = 0$ and then $D^2 = P_E^2(D) + N^2_E(D)$.
    \item \label{item:zariskidec5} If the above properties are satisfied then
	    $P_E(D)$ and $N_E(D)$ are unique. In particular,
      $P_E(D)$ can be characterized as the largest effective 
      $\mathbb{Q}$-divisor such that
      $P_E(D) \preccurlyeq D$ and $P_E(D)$ is nef on $E$ (namely if $P^\prime$
      is an effective $Q$-divisor $ P^\prime \preccurlyeq D $ and
      $P^\prime$ is nef on
      $E$ then $P^\prime \preccurlyeq P_E(D) $). 
  \end{enumerate}
\end{proposition}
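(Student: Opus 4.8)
The plan is to construct the positive part $P_E(D)$ directly as the solution to a maximization problem, and then obtain the negative part by setting $N_E(D) := D - P_E(D)$, checking the four remaining properties in turn. Concretely, I would define
\begin{equation*}
  P_E(D) := \text{the largest effective } \mathbb{Q}\text{-divisor } P \preccurlyeq D \text{ that is nef on } E,
\end{equation*}
where ``largest'' is with respect to the partial order $\preccurlyeq$. The first task is to show this supremum exists and is attained. Consider the set $\mathcal{S}$ of effective $\mathbb{Q}$-divisors $P$ with $P \preccurlyeq D$ and $P \cdot E_i \ge 0$ for all $i$; it is nonempty since $0 \in \mathcal{S}$. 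The key point is that $\mathcal{S}$ is closed under taking componentwise maxima: given $P', P'' \in \mathcal{S}$, I would verify that $\max(P', P'')$ (defined componentwise on prime divisors) again lies in $\mathcal{S}$. Since all members of $\mathcal{S}$ are bounded above by $D$, the family of coefficients is bounded, so the supremum $P_E(D)$ exists as a $\mathbb{Q}$-divisor and is itself nef on $E$ and $\preccurlyeq D$; this simultaneously establishes the maximality property \ref{item:zariskidec5}.

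Having constructed $P_E(D)$, I would set $N_E(D) := D - P_E(D)$, which is effective by construction, giving property \ref{item:zariskidec1}, while property (ii) is immediate from the defining nefness of $P_E(D)$ on $E$. For property (iii), the support of $N_E(D)$ must lie in $E$: if some prime component $G$ not among the $E_i$ appeared in $N_E(D)$ with positive coefficient, then adding a small multiple of $G$ to $P_E(D)$ would keep it effective, still $\preccurlyeq D$, and not decrease any intersection number $P_E(D) \cdot E_i$ (since $G \cdot E_i \ge 0$ for $G \ne E_i$), contradicting maximality. Property (iv), the orthogonality $P_E(D) \cdot E_i = 0$ for each $E_i$ in the support of $N_E(D)$, is the crux and is where the \emph{negative definiteness} of $E$ enters decisively: if we had strict inequality $P_E(D) \cdot E_j > 0$ for some $E_j$ actually occurring in $N_E(D)$, I would use negative definiteness of the intersection matrix $(E_i \cdot E_j)$ to solve for a correction $\sum_i c_i E_i \preccurlyeq N_E(D)$ that can be added back to $P_E(D)$ while preserving nefness on $E$, again contradicting maximality. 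The decomposition $D^2 = P_E^2(D) + N_E^2(D)$ then follows formally from $P_E(D) \cdot N_E(D) = 0$.

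The main obstacle I anticipate is property (iv), specifically producing the explicit correction divisor supported on $E$ using negative definiteness. The standard device is that negative definiteness makes the linear system governing the coefficients $a_i$ of $N_E(D)$ uniquely solvable with the correct sign behavior: one shows that if $P_E(D)$ failed to be orthogonal to the support of $N_E(D)$, the negative definite quadratic form would allow a strictly larger admissible $P$, which is impossible. I would lean on the invertibility of the negative definite matrix $(E_i \cdot E_j)$ over the relevant support to extract the correction, taking care that the resulting adjustment stays $\preccurlyeq D$, which is where the estimates require attention. Uniqueness of the pair $(P_E(D), N_E(D))$ is then a direct consequence of the maximality characterization in \ref{item:zariskidec5}: any other decomposition satisfying (i)--(iv) would have its positive part both $\preccurlyeq P_E(D)$ and (by symmetry of the argument) $\succcurlyeq P_E(D)$, forcing equality.
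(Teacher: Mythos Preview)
Your plan follows the same route as the paper (after Bauer): build $P_E(D)$ as the maximal nef-on-$E$ subdivisor of $D$, then verify (i)--(iv). There are, however, two genuine gaps, and you have slightly mislocated where negative definiteness does the work.

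First, the assertion that the supremum ``exists as a $\mathbb{Q}$-divisor'' is unjustified: the supremum of a bounded family of rational vectors is in general only real. The paper resolves this by first producing a real maximizer (of the linear functional $\sum_j x_j$ on the compact polytope cut out by $0\le x_j\le d_j$ and the nefness inequalities), establishing (iii) and (iv) for it, and then observing that these force the maximizer to solve a rational linear system whose coefficient matrix is block-triangular with an identity block and the intersection matrix of $E$; invertibility of the latter (this is the first use of negative definiteness) makes the solution rational. Second, property (iv) does \emph{not} require negative definiteness: if $E_j$ lies in the support of $N_E(D)$, then $P_E(D)+\epsilon E_j$ is still $\preccurlyeq D$ for small $\epsilon>0$ and still nef on every $E_i$ with $i\neq j$ (because $E_i\cdot E_j\ge 0$ for distinct irreducible curves), so maximality forces $(P_E(D)+\epsilon E_j)\cdot E_j<0$, hence $P_E(D)\cdot E_j\le 0$, hence $=0$. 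Where negative definiteness is genuinely needed, beyond rationality, is in the full strength of (v) and hence in uniqueness. Your ``symmetry'' does not suffice: you have shown that the supremum satisfies (i)--(iv), not that any pair satisfying (i)--(iv) has maximal positive part. For that one argues, as the paper does, that $\max(P_E(D),P')-P_E(D)=\sum_i x_iE_i$ is effective with support in $N_E(D)$; nefness of the max on $E$ together with (iv) gives $\bigl(\sum_i x_iE_i\bigr)^2\ge 0$, and negative definiteness then forces every $x_i=0$.
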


\begin{proof}
	First of all, 
	we are going to prove that there exist unique $\mathbb{R}$-divisors
  $P_E(D)$ and $N_E(D)$ that
  satisfy properties \emph{(i)}--\emph{(iv)}, that these divisors 
  are rational will 
  follow immediately from unicity, namely they will be the unique solution of a
  system of linear equations with integral coefficients.
  Up to reordering, we may suppose that $E_1,\ldots,E_k$, $k\le l$, are the
  components of $E$ contained in the support of $D$. Put 
  \begin{equation*}
    D = \sum_{j=1}^{h+k} d_j D_j
  \end{equation*}
  where the irreducible components of the support of $D$ not contained
  in $E$ are denoted by $D_1,..., D_h$, moreover 
  $D_{h+1}=E_1,\ldots,D_{h+k} = E_{k}$, 
  and we set $E^\prime =
  \sum_{i=1}^k E_i$. Consider the linear space of $\mathbb{R}$-divisors
  with support in $D$, we can write its elements as $\sum_{j=1}^{h+k} x_j
  D_j$. In this linear space consider the compact subset defined by:
  \begin{equation}
    \label{equation:initialxj}
    0\le x_j \le d_j,\quad j=1,\ldots,h+k
  \end{equation}
  and
  \begin{equation}
    \label{equation:initialDE}
    \sum_{j=1}^{h+k}x_j \left( D_j\cdot E_i\right)\ge  0\quad i=1,\ldots,k\ .
  \end{equation}
  This compact set contains at least one point that maximizes the
  function $\sum_{j=1}^{h+k} x_j $, let $P_E(D)$ be the corresponding divisor,
  $N_E(D) = D - P_E(D)$, \emph{(i)} is then satisfied. Moreover since $P_E(D)$ 
  satisfies \eqref{equation:initialDE}, it is nef on $E^\prime$ and then on $E$.
  Observe that in \eqref{equation:initialDE} the coefficient 
  $D_j\cdot E_i$ can be negative only for $j> h$, hence these inequalities do
  not impose any restrictions on $x_j$ for $j\le h$. It follows that without 
  loss of generality we can substitute 
  the first $h$ inequalities in \eqref{equation:initialxj} with the equalities
  \begin{equation}
    \label{equation:xj}
      x_j=d_j, \quad j=1,\ldots,h
  \end{equation}
  and then $N_E(D)$ satisfies \emph{(iii)}. 
  Since $P_E(D)$ maximizes $\sum_j x_j$, for
  every fixed $1\le i \le k$ such that $d_{h+i}\ne 0$ and small 
  $\epsilon > 0 $, $P_E(D)+\epsilon E_i$ is not
  nef on $E^\prime$, and then 
  \begin{equation}
    \label{equation:limitepsilon}
    \left( P_E(D) + \epsilon E_i \right) \cdot E_i <0
  \end{equation}
  since the other intersections $\left( P_E(D) + \epsilon E_i \right)\cdot E_j$,
  $j\ne i$, are still non negative. Passing to the limit in
  \eqref{equation:limitepsilon} we get $P_E(D)\cdot E_i\le 0$ but since $P_E(D)$
  is nef on $E^\prime$ we have $P_E(D)\cdot E_i=0$ and this
  concludes the proof of \emph{(iv)}.

  Observe that in view of \emph{(iv)}
  we can now
  rewrite \eqref{equation:initialDE} as
  \begin{equation} \label{equation:DE}
    \sum_{j=1}^{h+k}x_j \left( D_j\cdot E_i\right)=  0\quad i=1,\ldots,k\ ,
  \end{equation}
  and hence $P_E(D)$ is a solution of the $h+k$ equations given by
  \eqref{equation:xj}, \eqref{equation:DE}. If we write this system of
  equations in matrix form we obtain:
  \begin{equation*}
    \begin{bmatrix}
      I_h & 0 \\
      A & \bf{E'}
    \end{bmatrix}\cdot \bf{X} =
    \begin{bmatrix}
      \bf{d} \\
      \bf{0}
    \end{bmatrix}
  \end{equation*}
  where $I_h$ is the $h\times h$ identity matrix, $\bf{E'}$ is the
  $k\times k$ negative definite intersection matrix of $E^\prime$ and $A$ is a
  $k\times h$ matrix with rational integral entries. Since
  all involved coefficients are rational, then the unique solution of the
  above system of equations corresponding to $P_E(D)$ is rational.    

  For the proof of the last part of \emph{(v)} we are going to use 
  the following Lemma.
  \begin{lemma}\label{lemma:maxdivisors}
    Let $P$, $P^\prime$ be two effective divisors $P, P^\prime \preccurlyeq
    D$, $P = \sum_{j=1}^{h+k} y_j D_j$ and $P^\prime =
    \sum_{j=1}^{h+k} y^\prime_{j} D_j$. If $P$, $P^\prime$ are both nef on $E$
    then $\max \left( P, P^\prime \right) := \sum_{j=1}^{h+k} \max\left(
    y_j, y^\prime_j\right) D_j$ is nef on $E$.
  \end{lemma}
  \begin{proof}[Proof of Lemma \ref{lemma:maxdivisors}]
	  Given an element in the linear space of $\mathbb{R}$-divisors with
	  support contained in $D$, it is nef on $E$ if and only if its
	  coordinates $x_1,\dots,x_{h+k}$ satisfy the inequalities in
	  \eqref{equation:initialDE}. Consider then the $i$-th inequality in
	  \eqref{equation:initialDE}, its left-hand side is a linear polynomial
	  in the $x$'s that has only one negative coefficient, namely
	  $D_{h+i}\cdot E_i$. We may suppose without loss of
	  generality that $y_{h+i}\ge y^\prime_{h+i}$, then 
	  \begin{equation*}
		  \left( \max \left( P, P^\prime \right) - P \right) \cdot E_i
		  \ge 0
	  \end{equation*} 
	  from which it follows that $ \max \left( P, P^\prime
	  \right)\cdot E_i \ge 0$.  Since the above argument holds for every
	  $i=1,\ldots,k$, this concludes the proof of the Lemma.
  \end{proof}
  
  Let us now complete the proof of \emph{(v)}. 
  Let $P^\prime_E(D) \preccurlyeq D  $ be an effective
  divisor 
  that is nef on $E$. Since the support of the negative part is contained
  in $E^\prime$ (see \eqref{equation:xj}) 
  then there exist $x_i \ge 0$, $i=1,\ldots,k$, such that 
  \begin{equation*}
    \max \left( P_E(D), P_E^\prime(D) \right) = 
    P_E(D) + \sum_{i=1}^{k} x_i E_i\ .
  \end{equation*}
  Since by Lemma \ref{lemma:maxdivisors} $\max \left( P_E(D),
  P^\prime_E(D)\right) $ is nef on $E^\prime$ then for $j=1,\ldots,k$ we have
  \begin{equation*}
    \sum_{i=1}^{k} x_i E_i\cdot E_j \ge 0
  \end{equation*} 
  and then
  \begin{equation*}
    \left( \sum_{i} x_i E_i \right)\cdot \left( \sum_i x_i E_i \right) = \sum_j
    \left( \sum_i x_i E_i \cdot E_j \right) \ge 0 \ .
  \end{equation*}
  By hypothesis the intersection matrix of $E$, and then that of
  $E^\prime$, is negative definite, the above inequality implies $x_i=0$,
  $i=1,\dots,k$ and
  then $\max \left( P_E(D), P_E^\prime(D) \right) = P_E(D) $. This concludes the
  proof of \emph{(v)} and of the Proposition.
\end{proof}

\begin{definition}
  Given an effective $Q$-divisor $D$ and a negative definite cycle $E$ we
  will call the decomposition $D = P_E(D) + N_E(D)$ of Proposition
  \ref{proposition:zariskidecomposition} the Zariski
  decomposition of $D$ with support in $E$. In particular we will call
  $P_E(D)$ the $E$-nef part of $D$ and $N_E(D)$ the $E$-negative part of $D$. 
\end{definition}

\begin{remark} \label{remark:supportinclu}
  From the proof of Proposition \ref{proposition:zariskidecomposition} it
  follows that the Zariski decomposition of a divisor $D$ with support on a
  particular negative cycle $E$ depends only on the part of $E$ supported 
  in $N_E(D)$.  
\end{remark}

\begin{remark}
  In order to distinguish the Zariski decomposition with support from the
  classical one, we will refer to the latter
  as the \emph{absolute Zariski decomposition}
  and denote it by $D =
  P\left(D \right) + N\left( D \right)$. 
  The reasons behind this terminology
  as well as the relation between the two decompositions will be clarified
  by the next Corollary. In particular, the absolute Zariski
  decomposition coincides with the Zariski decomposition of $D$ with
  support in $N \left( D \right)$, moreover among the various 
  Zariski
  decompositions of $D$ the absolute one is characterized by having
  maximal negative part and minimal nef part. 
\end{remark}

\begin{corollary} \label{corollary:zdecompositionpr}
    Let $D$, $D^\prime$ two effective $Q$-divisors and $E$,
    $\widehat{E}$ negative definite cycles.
    \begin{enumerate} [label = (\roman*) ]
      \item If $D\preccurlyeq D^\prime$ then $P_E\left(D \right) \preccurlyeq
	P_E\left( D^\prime \right) $.
      \item If $E \preccurlyeq \widehat{E}$ then $N_E\left(D \right)
	\preccurlyeq N_{\widehat{E}}\left( D \right)$, $P_E\left(D \right)
	\succcurlyeq P_{\widehat{E}}\left( D \right)$, $0\ge \left(
	N_E\left(D \right) \right)^2
        \ge \left( N_{\widehat{E}}\left( D \right) \right)^2 $ and 
	$\left( P_E\left(D \right) \right)^2
        \le \left( P_{\widehat{E}}\left( D \right) \right)^2 $.
      \item Let $\mathrm{N}$ be the support of $N\left( D \right)$,
	$\mathrm{N}$
	is a negative definite cycle, $N\left( D \right) =
	N_{\mathrm{N}}\left( D \right)$, 
	$P\left( D \right) =
	P_{\mathrm{N}}\left( D \right)$, 
	$N_E\left(D \right)
	\preccurlyeq N \left( D \right)$, $P_E\left(D \right)
	\succcurlyeq P \left( D \right)$, $0\ge \left(
	N_E \left( D \right) \right)^2
	\ge \left( N \left( D \right) \right)^2 $ and 
	$\left( P_E\left(D \right) \right)^2
	\le \left( P \left( D \right) \right)^2 $.
    \end{enumerate}
\end{corollary}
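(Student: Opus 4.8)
The plan is to deduce the whole statement from just two features of the $E$-Zariski decomposition already recorded in Proposition~\ref{proposition:zariskidecomposition}: the maximality characterization of the nef part (property (v), i.e. \ref{item:zariskidec5}), and the orthogonality relation in part (iv) together with the negative definiteness of the supporting cycle. The ordering assertions will come entirely from maximality, applied each time to whichever nef part is being maximized, while the self-intersection inequalities will all reduce to a single computation in which a cross term vanishes by orthogonality.

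For (i) I would simply observe that $P_E(D)$ is effective, satisfies $P_E(D)\preccurlyeq D\preccurlyeq D'$, and is nef on $E$; it is therefore an admissible competitor in the maximization defining $P_E(D')$, so applying the maximality property \ref{item:zariskidec5} to $D'$ gives $P_E(D)\preccurlyeq P_E(D')$. For (ii) I would first note that $P_{\widehat{E}}(D)$, being nef on $\widehat{E}$, is a fortiori nef on the smaller cycle $E$; as it is also effective and $\preccurlyeq D$, maximality for $P_E(D)$ yields $P_{\widehat{E}}(D)\preccurlyeq P_E(D)$, and subtracting from $D$ gives $N_E(D)\preccurlyeq N_{\widehat{E}}(D)$. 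For the squares I set $R:=P_E(D)-P_{\widehat{E}}(D)=N_{\widehat{E}}(D)-N_E(D)\succcurlyeq 0$. Since $R\preccurlyeq N_{\widehat{E}}(D)$, its support lies in that of $N_{\widehat{E}}(D)$, so part (iv) of Proposition~\ref{proposition:zariskidecomposition} for the $\widehat{E}$-decomposition forces $P_{\widehat{E}}(D)\cdot R=0$. Expanding $P_E(D)^2=(P_{\widehat{E}}(D)+R)^2=P_{\widehat{E}}(D)^2+R^2$ and using $R^2\le 0$ because $R$ is supported on the negative definite cycle $\widehat{E}$, I obtain $P_E(D)^2\le P_{\widehat{E}}(D)^2$. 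Finally $0\ge N_E(D)^2$ is negative definiteness of $E$, and $N_E(D)^2\ge N_{\widehat{E}}(D)^2$ follows from the inequality on nef squares together with $D^2=P^2+N^2$ from (iv).

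For (iii) the one genuinely external ingredient is that the support $\mathrm{N}$ of the absolute negative part is negative definite and that the absolute decomposition is orthogonal, which is the classical content of Zariski's theory \cite{zariski-rr}. Granting this, the absolute decomposition $D=P(D)+N(D)$ meets all the requirements (i)--(iv) of the $\mathrm{N}$-Zariski decomposition --- in particular $P(D)$ is nef on $X$, hence on $\mathrm{N}$, and $P(D)\cdot E_i=0$ for every component $E_i$ of $N(D)$ --- so the uniqueness in \ref{item:zariskidec5} forces $N(D)=N_{\mathrm{N}}(D)$ and $P(D)=P_{\mathrm{N}}(D)$. For an arbitrary negative definite $E$, the divisor $P(D)$ is nef on all of $X$, hence on $E$, effective and $\preccurlyeq D$; maximality for $P_E(D)$ gives $P(D)\preccurlyeq P_E(D)$, whence $P_E(D)\succcurlyeq P(D)$ and $N_E(D)\preccurlyeq N(D)$. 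The squares are then handled exactly as in (ii): $R:=P_E(D)-P(D)=N(D)-N_E(D)\succcurlyeq 0$ is supported on $\mathrm{N}$, the absolute orthogonality gives $P(D)\cdot R=0$, and $P_E(D)^2=P(D)^2+R^2\le P(D)^2$ since $R^2\le 0$; the chain $0\ge N_E(D)^2\ge N(D)^2$ follows as before.

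The hard part, conceptually, is the self-intersection comparison, and the key realization is that the difference $R$ of the two nef parts is always supported on the negative cycle of the decomposition with the \emph{larger} negative part, so that the cross term $P\cdot R$ is killed by orthogonality and the comparison collapses to $R^2\le 0$. The subtle point specific to (iii) is that $E$ need not be contained in $\mathrm{N}$, so the statement cannot be obtained from (ii) by specialization; it is essential there that $P(D)$ is nef on the whole surface rather than merely on a cycle, which is precisely the feature distinguishing the absolute decomposition and which lets the maximality argument run for every $E$.
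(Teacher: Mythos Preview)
Your proof is correct and follows essentially the same approach as the paper: parts (i) and (ii) are argued identically via the maximality property \ref{item:zariskidec5} and the orthogonality in (iv), with the self-intersection comparison reduced to $R^2\le 0$. The only organizational difference is in (iii): you repeat the square computation directly, whereas the paper reduces (iii) to (ii) by invoking Remark~\ref{remark:supportinclu} (the $E$-decomposition depends only on the part of $E$ supported in $N_E(D)$, so one may replace $E$ by its intersection with $\mathrm{N}$ and then apply (ii)). Thus your claim that (iii) ``cannot be obtained from (ii) by specialization'' is not quite accurate---the paper does precisely this via the support-reduction remark---but your direct argument is equally valid and avoids that extra step.
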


\begin{proof}
  Observe that $P_E\left( D \right) \preccurlyeq D \preccurlyeq
  D^\prime$ and $P_E\left( D \right) $ is nef on $E$, then \emph{(i)}
  follows by
  Proposition \ref{item:zariskidec5}. 
  Similarly, in
  \emph{(ii)},
  $P_{\widehat{E}}\left( D \right)\preccurlyeq D$, it is nef on 
  $\widehat{E}$ and 
  then on $E$. By Proposition \ref{item:zariskidec5}, 
  $P_E\left(D \right) \succcurlyeq P_{\widehat{E}}\left( D \right)$ and
  consequently
  \begin{equation*}
    N_E\left(D \right) = D - P_E\left(D \right)
    \preccurlyeq D - P_{\widehat{E}}\left( D \right) =
    N_{\widehat{E}}\left( D \right)\ .
  \end{equation*}
  Let us prove the inequalities involving intersection numbers in
  \emph{(ii)}. In
  the vector space of cycles whose support is contained in $D$ 
  denote by $V$ and $\widehat{V}$ cycles whose support
  are contained in $E$, $\widehat{E}$, respectively. 
  Observe that since $V \subseteq
  \widehat{V}$ then $V^{\perp} \supseteq \widehat{V}^{\perp}$ and we have
  the two orthogonal decompositions:
  \begin{gather*}
    D = P_E\left( D \right) + N_{E}\left( D \right) \in V^\perp \oplus V \\
    D = P_{\widehat{E}}\left( D \right) + N_{\widehat{E}}\left( D \right) \in 
    \widehat{V}^\perp \oplus \widehat{V}\ .
  \end{gather*}
  It follows that 
  \begin{equation*}
    R = P_E\left( D \right) - P_{\widehat{E}}\left( D \right) =
    N_{\widehat{E}}\left( D \right) - N_{E}\left( D \right)\in V^{\perp}
    \cap \widehat{V}
  \end{equation*}
  and we have the two orthogonal decompositions
  \begin{gather*}
    P_E\left( D \right) = P_{\widehat{E}}\left( D \right) + R \in
    \widehat{V}^{\perp} \oplus \widehat{V} \\
    N_{\widehat{E}}\left( D \right) =  N_{E}\left( D \right) + R \in
    V\oplus V^{\perp}
  \end{gather*}
  from which the inequalities involving intersection numbers in
  \emph{(ii)} follow
  directly by observing that since $R \in \widehat{V}$ then $R^2\le 0$. 

  Finally by definition of absolute Zariski decomposition of $D$, it coincides
  with the Zariski decomposition with support contained in the negative
  part $N\left( D \right)$. 
  Since $P\left( D \right) $ is nef by Proposition
 \ref{item:zariskidec5} we have $\overline{P}\left( D
  \right) \preccurlyeq P_E\left(D \right)$, it follows that  
  $N_E\left(D \right) \preccurlyeq N \left( D \right)$.
  Without loss of generality, see Remark
  \ref{remark:supportinclu}, we can assume that the support of $E$ is
  contained in $\mathrm{N}$, the remaining inequalities in \emph{(iii)}
  then follow by part \emph{(ii)} of the Corollary. 
\end{proof}

\begin{corollary}
  Let $D_1,\ D_2\in \mathrm{Div}_{\mathbb{Q}}\left( X \right)$ be
  effective divisors and let $E$ be a negative definite cycle. 
  \begin{enumerate} [label = (\roman*) ]
    \item If $D_1 \equiv_{num.} D_2$ then $N_E\left( D_1 \right) =
      N_E\left( D_2 \right)$ and $N\left( D_1 \right) = N\left( D_2
      \right)$.
    \item If moreover $D_1 \equiv_{lin.} D_2$ then $P_E\left( D_1 \right)
    \equiv_{lin.}  P_E\left( D_2 \right) $ and $P\left( D_1 \right) \equiv_{lin.}
    P\left( D_2 \right)$. 
  \end{enumerate}
  \label{corollary:zariskilinear}
\end{corollary}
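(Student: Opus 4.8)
The plan is to reduce the whole statement to the single assertion that $N_E(D_1)=N_E(D_2)$ whenever $D_1\equiv_{num.}D_2$, since everything else follows formally. Once this is known, the absolute case $N(D_1)=N(D_2)$ will come from Corollary \ref{corollary:zdecompositionpr}, and part (ii) is then immediate: writing $P_E(D_i)=D_i-N_E(D_i)$ and using $N_E(D_1)=N_E(D_2)$ gives $P_E(D_1)-P_E(D_2)=D_1-D_2$, which is linearly equivalent to $0$ precisely when $D_1\equiv_{lin.}D_2$; the same computation with the absolute parts handles $P(D_1)\equiv_{lin.}P(D_2)$. The one point requiring care is that numerical equivalence does not preserve the support of a divisor, so the maximality characterization of Proposition \ref{item:zariskidec5}, which refers to $\preccurlyeq D$, cannot be invoked directly. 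The key observation is instead that the defining properties \emph{(ii)}--\emph{(iv)} of the negative part are purely numerical: they only constrain the intersection numbers $N_E(D)\cdot E_i$ and $D\cdot E_i$, together with the nonnegativity of the coefficients.

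Concretely, I would record that $N=N_E(D)$ satisfies, among effective cycles supported on $E$, the conditions $(D-N)\cdot E_i\ge 0$ for all $i$ together with $(D-N)\cdot E_i=0$ whenever $E_i$ occurs in $N$ (these are exactly \emph{(ii)} and \emph{(iv)}). Reading $D_1\equiv_{num.}D_2$ as $D_1\cdot E_i=D_2\cdot E_i$ for every $i$, both $N_E(D_1)$ and $N_E(D_2)$ satisfy these conditions relative to the \emph{same} numbers $D_1\cdot E_i$. To conclude that they coincide I would set $R=N_E(D_1)-N_E(D_2)$, a cycle supported on $E$, and expand $R^2=\sum_i(a_i-b_i)(R\cdot E_i)$, where $a_i$, $b_i$ are the coefficients of $E_i$ in $N_E(D_1)$, $N_E(D_2)$. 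A termwise sign analysis---using that $a_i>b_i$ forces $(D_1-N_E(D_1))\cdot E_i=0$ while $(D_1-N_E(D_2))\cdot E_i\ge 0$, and symmetrically when $b_i>a_i$---shows each summand is $\ge 0$, so $R^2\ge 0$. Since $E$ is negative definite this forces $R=0$, i.e. $N_E(D_1)=N_E(D_2)$. This negative-definiteness step is the real heart of the argument, and is a variant of the uniqueness argument already used in the proof of Proposition \ref{proposition:zariskidecomposition}.

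For the absolute decomposition I would bootstrap from the support case. Let $\mathrm{N}_1=\mathrm{supp}\,N(D_1)$; by Corollary \ref{corollary:zdecompositionpr} it is negative definite with $N(D_1)=N_{\mathrm{N}_1}(D_1)$, while $N_{\mathrm{N}_1}(D_2)\preccurlyeq N(D_2)$. Applying the case just proved to the negative definite cycle $\mathrm{N}_1$ gives $N_{\mathrm{N}_1}(D_1)=N_{\mathrm{N}_1}(D_2)$, whence $N(D_1)\preccurlyeq N(D_2)$; exchanging the roles of $D_1$ and $D_2$ yields the reverse inequality, so $N(D_1)=N(D_2)$. Finally part (ii) follows as indicated, since $P_E(D_1)-P_E(D_2)=D_1-D_2$ and $P(D_1)-P(D_2)=D_1-D_2$ are linearly trivial under the hypothesis $D_1\equiv_{lin.}D_2$. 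I expect no obstacle beyond the numerical uniqueness step, the remaining deductions being formal.
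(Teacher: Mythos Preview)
Your argument is correct, and it proceeds along a different line from the paper's own proof. The paper treats the absolute case first: it asserts that an irreducible curve $F'$ lies in the support of $N(D_i)$ if and only if $D_i\cdot F'<0$, so that the two negative parts share a common support $F$, and then observes that the coefficients of $N(D_i)$ are the unique solution of the linear system $\mathbf{F}\cdot\mathbf{X}=\mathbf{d}_i$ determined by the intersection numbers $D_i\cdot F_j$; since $\mathbf{d}_1=\mathbf{d}_2$, the negative parts coincide, and the relative case is declared analogous. Your route instead settles the relative case directly by the sign analysis showing $R^2\ge 0$ for $R=N_E(D_1)-N_E(D_2)$, and then recovers the absolute case by applying the relative statement to $E=\operatorname{supp}N(D_1)$ together with the comparison $N_E(D_2)\preccurlyeq N(D_2)$ from Corollary~\ref{corollary:zdecompositionpr} and symmetry. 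Your approach has the virtue of bypassing the identification of the support altogether---the paper's ``if and only if'' is in fact delicate, since a component $F'$ of $N(D)$ can satisfy $D\cdot F'=0$---whereas the paper's approach, once the support is pinned down, finishes by a single linear-algebra step and makes transparent that the coefficients depend only on the numbers $D\cdot F_j$.
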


\begin{proof}
  The second statement follows directly from the first one. Let us prove
  the first one for the absolute Zariski decomposition, the case of
  relative Zariski decomposition can be proved by an analogous argument. 
  Observe that, by the maximality of the positive part, a curve $F^\prime$
  is contained in the support of the negative part of $D_i$ if and only if
  $D_i\cdot F^\prime< 0$. It follows that $N\left( D_1 \right)$ and $ N \left(
  D_2 \right)$ are supported on the same negative definite cycle, say
  $F = \sum_{j=1}^{k} F_j$. Let $\bf{F} $ be the $k\times k$ negative
  definite intersection matrix of $F$ and write $N\left( D_i \right) =
  \sum_{j=1}^{k}x_j F_j $, then ${\bf X}^i = \left( x^i_1, \ldots,x^i_j,\ldots,
  x^i_k \right)$ is the unique solution to the system of linear equations 
    \begin{equation*}
      {\bf F} \cdot {\bf X}^i = {\bf d}_i
    \end{equation*}
    where ${\bf d}_i = \left( D_i\ F_1, \ldots, D_i\ F_k \right)$, since 
    ${\bf d}_1 = {\bf d}_2$, this concludes the proof.  
\end{proof}

\begin{remark}
  \label{remark:zariskigeneral}
  In view of Corollary \ref{corollary:zariskilinear}, it does make sense to 
  consider the Zariski decomposition, either relative or absolute, of a
  divisor that is linearly equivalent to an effective $\mathbb{Q}$-divisor.
\end{remark}

\subsection{Nef reduction}
Let $\rho \colon Z\rightarrow Y$ be a surjective morphism between
non singular projective surfaces whose exceptional locus $R$ is a
divisor of simple normal crossings. Suppose moreover 
\begin{equation*}
  R \subseteq \rho^{-1}\left( \Lambda \right)\subseteq \Delta\ , 
\end{equation*}
where $\Delta$ is an normal crossing divisor on $Z$ and
$\Lambda$ an effective reduced divisor on $Y$. 
Observe that $\rho \left( R \right)$ is a finite
set and  then we
can find an affine open subset $U$ containing $\rho ( R )$ in
which $\Lambda$ is defined by a single equation, say $\lambda$.
Hence $\rho^*\left( \dif \log\ \lambda \right) $ is a
section of $\Omega^1_{Z}\left( \log \Delta \right)_{|\rho^{-1}(U)}$
defined around $R$. Indeed it suffices to check this statement 
locally at each
point $q \in R \subseteq \rho^{-1}\left( \Lambda \right)$. 
Indeed since
$\rho^{-1}\left( \Lambda \right)$ is contained in $\Delta$ 
of simple normal crossing, there exist $z_1, z_2$
local coordinates around $q$ such that 
\begin{equation*}
  \rho^{*} \lambda = u\ z_1^a z_2^b,
\end{equation*}
$a,b \ge 0$, $a+b> 0$, $u$ a unit, and then
\begin{equation} \label{equation:pullbackdlog}
  \rho^*\left( \dif \log\ \lambda \right) = a \dfrac{\dif z_1}{z_1} + b
  \dfrac{\dif z_2}{z_2} + \textrm{(regular $1$-form)}\ , 
\end{equation}
with $(a,b)\ne (0,0)$. 

\begin{proposition}
  \label{proposition:nefreduction}
  Let $\rho \colon Z\rightarrow Y$, $R$, $\Delta$, $\Lambda$, $\lambda$
  as above and $\mathcal{E}$ a rank two vector bundle on $Z$ such that:
  \begin{enumerate}
    \item $\mathcal{E}$ is a subsheaf of $\Omega^1_{Z}( \log \Delta ) $.
    \item $D = c_1 ( \mathcal{E} )$ is a $\mathbb{Q}$-effective divisor.
    \item $N_R( D )$ is an integral divisor.
    \item There exist a neighborhood $V$ of $R$ such that $\rho^*(
	    \dif \log\ \lambda ) \in \Gamma (V, \mathcal{E}) \subseteq
	    \Gamma( \Omega^1_{Z}(V, \log \Delta
       )) $. 
  \end{enumerate}
  Then, there exist a rank two vector bundle $\mathfrak{P}_R(
  \mathcal{E}) \subset \mathcal{E}$ such that:
  \begin{enumerate}
    \item $ \mathfrak{P}_R(
     \mathcal{E} )  = \mathcal{E}$ outside of $R$;
    \item $ c_1 ( \mathfrak{P}_R (
      \mathcal{E} ) ) =
      P_R ( D )$;
    \item $c_2 ( \mathfrak{P}_R (
      \mathcal{E} ) ) = c_2 (
      \mathcal{E} ) $.
  \end{enumerate}
\end{proposition}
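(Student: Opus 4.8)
The plan is to realise $\mathfrak{P}_R(\mathcal{E})$ as a single elementary modification of $\mathcal{E}$ along the negative cycle, carried out in the direction singled out by the distinguished section $\rho^*(\dif \log \lambda)$. Write the Zariski decomposition of $D$ with support in $R$ furnished by Proposition \ref{proposition:zariskidecomposition} as $D = P_R(D) + N_R(D)$, and set $N := N_R(D) = \sum_i a_i E_i$; by the integrality assumption this is an \emph{integral} effective divisor whose support lies in $R$. What must be produced is thus a rank two subsheaf of $\mathcal{E}$ whose first Chern class has been lowered by exactly $N$, whose second Chern class is unchanged, and which coincides with $\mathcal{E}$ away from $R$.

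First I would record the crucial local fact about $s := \rho^*(\dif \log \lambda)$. By assumption it is a section of $\mathcal{E}$ on a neighbourhood $V$ of $R$, and by the local expression \eqref{equation:pullbackdlog}, at every point $q \in R$ one has $s = a\,\frac{d z_1}{z_1} + b\,\frac{d z_2}{z_2} + (\textrm{regular } 1\textrm{-form})$ with $(a,b)\neq (0,0)$; hence $s$ is nowhere vanishing on $V$ already as a section of $\Omega^1_Z(\log \Delta)$, and a fortiori as a section of the subbundle $\mathcal{E}$. Consequently $s$ spans a sub-line-bundle $\mathcal{L}\subset \mathcal{E}|_V$ with $\mathcal{L}\cong \mathcal{O}_V$, the quotient $\mathcal{N} := (\mathcal{E}/\mathcal{L})|_V$ is again a line bundle, and — the point on which everything turns — $\mathcal{L}\cdot E_i = \deg\left( \mathcal{O}_{E_i} \right) = 0$ for every component $E_i$ of $N$.

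Next I would define $\mathfrak{P}_R(\mathcal{E})$ to be the subsheaf of $\mathcal{E}$ which equals $\mathcal{E}$ on $Z\setminus R$ and which, over $V$, equals the preimage under the projection $\mathcal{E}\to \mathcal{N}$ of the twisted subsheaf $\mathcal{N}(-N) = \mathcal{N}\otimes \mathcal{O}_V(-N) \subseteq \mathcal{N}$ (here the integrality of $N$ is used, so that $\mathcal{O}(-N)$ is an honest line bundle). The two descriptions agree on $V\setminus R$, where the twist is trivial, so they glue to a coherent subsheaf, which gives the first assertion. Over $V$ it sits in an extension of two line bundles
\[ 0 \to \mathcal{L} \to \mathfrak{P}_R(\mathcal{E}) \to \mathcal{N}(-N) \to 0, \]
whence $\mathfrak{P}_R(\mathcal{E})$ is locally free of rank two.

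Finally I would read off the Chern classes. From the quotient sequence $0 \to \mathfrak{P}_R(\mathcal{E}) \to \mathcal{E} \to \mathcal{Q} \to 0$, where $\mathcal{Q} = \mathcal{N}/\mathcal{N}(-N)$ is supported on $N$ with $c_1(\mathcal{Q}) = N$, one obtains $c_1(\mathfrak{P}_R(\mathcal{E})) = D - N = P_R(D)$, which is the second assertion. For the statement on $c_2$ I would compare Whitney's formula for the two line-bundle extensions on $V$, namely $c_2(\mathcal{E}) = c_1(\mathcal{L})\cdot c_1(\mathcal{N})$ and $c_2(\mathfrak{P}_R(\mathcal{E})) = c_1(\mathcal{L})\cdot\left( c_1(\mathcal{N}) - N \right)$; since the two sheaves differ only over $R$, the difference of the global second Chern classes is this localized quantity,
\[ c_2(\mathfrak{P}_R(\mathcal{E})) - c_2(\mathcal{E}) = - c_1(\mathcal{L})\cdot N = - \sum_i a_i\, (\mathcal{L}\cdot E_i). \]
The hard part is exactly this last step: it is not any modification lowering $c_1$ by $N$ that preserves $c_2$, but only the one performed along the line spanned by $\rho^*(\dif \log \lambda)$, and the vanishing $\mathcal{L}\cdot E_i = 0$ established above — a direct consequence of $s$ being nowhere zero on the exceptional locus — is precisely what forces $c_2(\mathfrak{P}_R(\mathcal{E})) = c_2(\mathcal{E})$.
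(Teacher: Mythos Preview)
Your construction is exactly the paper's: both define $\mathfrak{P}_R(\mathcal{E})$ as the kernel of the composite surjection $\mathcal{E}\to\mathcal{E}|_V\to\mathcal{N}\to\mathcal{N}|_N\cong\mathcal{O}_N(D)$, where $\mathcal{N}=\mathcal{E}|_V/\mathcal{O}_V\!\cdot s$ with $s=\rho^*(\dif\log\lambda)$ the nowhere-vanishing section. The only cosmetic difference is in the bookkeeping for $c_2$: you invoke $c_1(\mathcal{L})\cdot N=0$ coming from the triviality of $\mathcal{L}$, whereas the paper identifies the Chern class of the quotient with that of $\mathcal{O}_N(N)$ and then invokes the Zariski-decomposition orthogonality $P_R(D)\cdot N_R(D)=0$ --- two presentations of the same cancellation.
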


\begin{remark}
  Proposition \ref{proposition:nefreduction} is basically
  \cite{miyaoka-orbibundle} Lemma 2.3. Though the hypothesis in Miyaoka are
  slightly different from ours, indeed he supposes that the divisor $E$
  coincides with the exceptional locus of the map $\rho$, his proof still
  works without any substantial modification here as well.
\end{remark}

For reader's convenience, we briefly recall: 

\begin{proof}[Proof of Proposition \ref{proposition:nefreduction}]
  Since $\rho^*(\dif \log \lambda ) $  is a nowhere vanishing section of
  $\mathcal{E}$ in $V$, it induces an injection $\mathcal{O}_V\rightarrow
  \mathcal{E}_{\vert V}$. In view of \eqref{equation:pullbackdlog}, after
  restricting $V$ if needed, the cokernel of the above injection is locally
  free of rank one and then isomorphic to $\det ( \mathcal{E}
  )_{\vert V}$, summing up we get an exact sequence 
  \begin{equation*}
    0 \rightarrow \mathcal{O}_V \rightarrow \mathcal{E}_{\vert V} \rightarrow
    \mathcal{O}_V(D)\rightarrow 0\ .
  \end{equation*}
  In order to simplify notation set $N= N_R(D)$ and $P=P_R(D)$. With a slight
  abuse of notation we continue 
  to denote by $N$ the subscheme relative to the sheaf of ideals 
  $\mathcal{O}_Z(-N)$. 
  Observe that by hypothesis $N$ is a subscheme of $V$,
  consider the map obtained composing the surjections 
  \begin{equation*}
    \mathcal{E} \rightarrow \mathcal{E}_{\vert V}\rightarrow
    \mathcal{O}_V(D) \rightarrow
    \mathcal{O}_N(D)
  \end{equation*}
  and denote by $\mathfrak{P}_R(\mathcal{E})$ its kernel. 
  Since $P$ is numerically
  trivial on $N$, we get the exact sequence 
  \begin{equation*}
    0 \rightarrow \mathfrak{P}_R(\mathcal{E}) \rightarrow \mathcal{E} \rightarrow
    \mathcal{O}_N(N) \rightarrow 0
  \end{equation*}
  then
  \begin{equation*}
    c\left( \mathfrak{P}_R(\mathcal{E}) \right) (1+ N) = 
    c(\mathcal{E})   
  \end{equation*}
  from this equality
  \begin{equation*}
    c_1\left( \mathfrak{P}_R(\mathcal{E}) \right) = P
  \end{equation*}
  and finally
  \begin{equation*}
    c_2( \mathfrak{P}_R(\mathcal{E})) = c_2(\mathcal{E} ) - c_1(
    \mathfrak{P}_R(\mathcal{E})) N =  c_2(\mathcal{E} ) - P N =
    c_2(\mathcal{E} )\ .
  \end{equation*}
\end{proof}

\begin{definition}
  We call $ \mathfrak{P}_R( \mathcal{E}) $, 
  as in the Proposition \ref{proposition:nefreduction}, 
  a \emph{$R$-nef reduction} of $\mathcal{E}$.
\end{definition}

\subsection{Bogomolov-Miyaoka-Yau type inequality for subsheaves of
logarithmic forms}
In order to obtain our explicit version of Bogomolov-Miyaoka-Yau
inequality, following a quite common path (see for
	instance, \cite{megyesi-bmy}, \cite{langer-bmy},
\cite{miyaoka-orbibundle}, \cite{langer-orbifold}, only to name a few) 
we will start with a generalized inequality applied to a
suitable subsheaf of logarithmic forms. 
The proof will then consist in a
straightforward computation of the Chern classes appearing in the
inequality and a careful estimation of the contributions due to the
singularities involved. In particular we will make use of the following:

\begin{theorem}
  \label{theorem:bmy}
  Let $X$ be a smooth projective surface, $D$ a simple 
  normal crossing
  divisor on $X$ and $\mathcal{E}$ a locally free rank two subsheaf of
  $\Omega^1_{X}\left( \log D \right)$ such that $c_1 ( \mathcal{E} )$ 
  is an effective $\mathbb{Q}$-divisor. Then
  \begin{displaymath}
    3 c_2( \mathcal{E} ) + \dfrac{1}{4}  \left[ N \left( c_1 (
    \mathcal{E} ) \right) \right]^2 \ge c^2_1\left( \mathcal{E} \right) .
  \end{displaymath}
\end{theorem}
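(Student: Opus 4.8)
The plan is to reduce the statement to the case in which $c_1(\mathcal E)$ is nef, where a Bogomolov--Miyaoka--Yau inequality of the shape $c_1^2\le 3c_2$ is available for rank two subsheaves of $\Omega^1_X(\log D)$ (this is the classical input underlying the statement; compare the references gathered just before Theorem \ref{theorem:bmy}). The bridge from this nef case to the general $\mathbb{Q}$-effective case is provided by the interplay between the Zariski decomposition with support of Proposition \ref{proposition:zariskidecomposition} and the nef reduction of a sheaf of Proposition \ref{proposition:nefreduction}.

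Concretely, I would first set $D_0:=c_1(\mathcal E)$ and take its absolute Zariski decomposition $D_0=P+N$, with $P=P(D_0)$ nef, $N=N(D_0)$ effective and supported on a negative definite cycle $R$, and $P\cdot N=0$. By Proposition \ref{proposition:zariskidecomposition} this yields the orthogonality relation
\begin{equation*}
  c_1^2(\mathcal E)=D_0^2=P^2+N^2,\qquad N^2\le 0 .
\end{equation*}
I would then pass to an $R$-nef reduction $\mathfrak P:=\mathfrak P_R(\mathcal E)\subseteq \mathcal E\subseteq \Omega^1_X(\log D)$ as in Proposition \ref{proposition:nefreduction}, so that $\mathfrak P$ is again a rank two subsheaf of log differentials, now with $c_1(\mathfrak P)=P$ \emph{nef} and, crucially, $c_2(\mathfrak P)=c_2(\mathcal E)$.

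With this in place the conclusion is immediate: applying the nef case of the inequality to $\mathfrak P$ gives $P^2=c_1^2(\mathfrak P)\le 3c_2(\mathfrak P)=3c_2(\mathcal E)$, and therefore
\begin{equation*}
  c_1^2(\mathcal E)=P^2+N^2\le 3c_2(\mathcal E)+N^2\le 3c_2(\mathcal E)+\tfrac14 N^2 ,
\end{equation*}
where the final step uses only $N^2\le 0$, which forces $N^2\le\tfrac14 N^2$.

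I expect the genuine difficulty to reside in two places. The deep input is the nef-case inequality with the Miyaoka constant $3$ rather than the Bogomolov constant $4$: this is exactly where the embedding into the log cotangent sheaf, together with the semistability/orbibundle machinery, is really needed, and I would import it rather than reprove it. The more delicate internal point is the construction of the nef reduction $\mathfrak P$: the statement of Proposition \ref{proposition:nefreduction} is phrased through an auxiliary morphism whose sole function is to furnish a logarithmic section of $\mathcal E$ defined in a neighbourhood of $R$, so one must verify that such a local section exists intrinsically around the components of the negative cycle $R$, and, if necessary, replace $\mathcal E$ or $N$ by a suitable multiple to secure the integrality of the negative part demanded by hypothesis (3) of that Proposition. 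Once these two ingredients are in hand, the Zariski orthogonality $c_1^2=P^2+N^2$ and the trivial inequality $N^2\le\tfrac14 N^2$ close the argument.
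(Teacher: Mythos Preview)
The paper does not actually prove this theorem: its proof is a two-line citation to Miyaoka \cite[Remark~4.18]{miyaoka-maximal} and Langer \cite[Theorem~0.1]{langer-bmy}. So there is no ``paper's approach'' to match here; the result is imported wholesale.

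Your reduction strategy, however, has a genuine gap. Proposition~\ref{proposition:nefreduction} is not a general mechanism for stripping off the negative part of a Zariski decomposition: its hypotheses demand that the cycle $R$ arise as the exceptional locus of a morphism $\rho$ and that a nowhere-vanishing logarithmic section $\rho^*(d\log\lambda)$ of $\mathcal E$ exist in a neighbourhood of $R$. For an \emph{arbitrary} rank-two subsheaf $\mathcal E\subseteq\Omega^1_X(\log D)$ there is no reason the support of $N(c_1(\mathcal E))$ should be contractible, and no source for the required local section; you flag this but do not address it, and it cannot be filled in general. (Passing to a Kawamata cover helps with integrality of $N$ but does nothing for the other two hypotheses.)

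There is also an internal warning sign you should have caught: your argument, if it went through, would give $c_1^2(\mathcal E)\le 3c_2(\mathcal E)+N^2$, which is \emph{strictly stronger} than the stated inequality with coefficient $\tfrac14$, since $N^2\le 0$. The paper's own discussion at the start of Section~4 confirms that this sharper bound is not available in general: it explicitly remarks that ``the denominator $4$ is too big to obtain the desired bound'' and performs a nef reduction \emph{before} invoking Theorem~\ref{theorem:bmy}, precisely to trade the $\tfrac14$ for a $1$ on those components of the negative locus that happen to be exceptional for the resolution $\mu_2\circ\pi$. In other words, nef reduction is used in the paper as a supplement to Theorem~\ref{theorem:bmy} in a situation where its hypotheses are engineered to hold, not as a proof of Theorem~\ref{theorem:bmy} itself. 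The actual proof of the $\tfrac14$-statement in the cited references proceeds via the Harder--Narasimhan filtration and a foliation argument on the destabilising sub-line-bundle, not through Proposition~\ref{proposition:nefreduction}.
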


\begin{proof}
  See Miyaoka \cite[Remark 4.18, p.~170]{miyaoka-maximal}, \cite[Theorem
  0.1]{langer-bmy}.
\end{proof}

\begin{remark}
  \label{remark:bmysheaves}
  We did not state Theorem \ref{theorem:bmy} in the most general version
  available,
  indeed the above form will suffice for most of our purposes.  
  We will need Bogomolov-Miyaoka-Yau
  inequality in the generality provided by \cite[Theorem 0.1]{langer-orbifold}
  only 
  to prove 
  \eqref{MichaelsEqn}.
  For a generalization to the case of log canonical surfaces see for instance
  \cite{langer-orbifold}.
\end{remark}

\section{Main construction} \label{section:mainconstruction}
In this Section, and for the rest of this paper, 
we denote by $X$ a smooth projective surface,
$D$ a simple normal crossing divisor on $X$ 
and $C$ an irreducible curve on $X$ not contained in
$D$. Let $\alpha \in [0,1]\cap \mathbb{Q}$, following
Miyaoka \cite{miyaoka-orbibundle} \S 3, after taking a log resolution of
$C+D$, we are going to construct a vector bundle $\mathcal{E}_\alpha$ 
in terms of $X$, $C$, $D$ and $\alpha$. Namely, given
a log resolution of $C+D$, say $\widetilde{Y}$,
we consider logarithmic forms 
\footnote{For general facts on logarithmic forms used 
here the reader may consult 
\cite{viehweg-lectures}.
}
with poles on the total transform 
$\overline{C+D}$ and, roughly speaking, define $\mathcal{E}_\alpha$ as the
kernel of the map
\begin{equation*}
  \rho: \Omega^1_{\widetilde{Y}}\left( \log \overline{C+D} \right)
  \rightarrow \mathcal{O}_{(1-\alpha) \widetilde{C}} 
\end{equation*}
induced by the natural residue map
\begin{equation*}
  \Omega^1_{\widetilde{Y}}\left( \log \overline{C+D} \right)\rightarrow
  \mathcal{O}_{\widetilde{C}}\rightarrow 0
\end{equation*}
where $\widetilde{C}$ denotes the strict transform of $C$. Since $1-\alpha$
is not in general integral, in order to formulate this definition 
in a rigorous way we have implicitly to consider roots of a local
equation of $\widetilde{C}$.  
We will then take a suitable ``\emph{Kawamata covering}''
$f: Z \rightarrow \widetilde{Y}$ (see \cite[Theorem
17]{kawamata-characterizationav}, \cite[Proposition
4.1.12]{lazarsfeld-positivityI})
in which these roots are defined, or in other words, such that
$(1-\alpha)f^*\widetilde{C}$ is an integral divisor. $\mathcal{E}_\alpha$
will be defined on the Galois covering $Z$, by construction it will inherit an
equivariant action and it will then be, technically speaking, an
``\emph{orbibundle}'' on $\widetilde{Y}$.

We are going to detail the construction of $\mathcal{E}_\alpha$, since
in the following Section we will perform explicit computations involving its
Chern classes, we are going to take some time expressing them in terms of 
log resolution data.

\subsection{Definition of \texorpdfstring{$\mathcal{E}_\alpha$}{Ea}} 
\label{subsection:notation}
For reader convenience we will adopt, as far as possible,
Miyaoka's notation in \cite{miyaoka-orbibundle}, the main difference being
that we need to distinguish carefully between singular points contained in the
boundary divisor $D$ and points that are not.
First of all, consider the set of points where $C+D$ fails to be simple normal
crossing, say $S$. We can express $S$ as a disjoint union $S= S_1
\dot\bigcup S_2$, where $S_1$ is the set of singular points of $C$ that are
not contained in $D$ and $S_2$ is the set of points of $C\cap D$ where
$D+C$ is not simple normal crossing. We are going to consider a log-resolution
of $D+C$.
Let us start blowing up $X$ with centers in the points of $S$ say $\mu\colon
Y\rightarrow X$, moreover observe that we can write $\mu=\mu_2 \circ
\mu_1$, where $\mu_1\colon Y^\prime\rightarrow X$ is the blowing up of $X$ with
centers in the points of $S_1$ and $\mu_2\colon Y \rightarrow Y^\prime$ the 
blowing up of $Y^\prime$ with centers in the set $\mu_1^{-1}(S_2)$.
Let us denote by $s^\prime, s$ the cardinality of the sets $S_1$ and $S$, 
respectively. Let $E_1,\dots , E_s$ be the exceptional curves of $\mu$,
where $E_1,\dots,E_{s^\prime}$ come from $\mu_1$ and 
$E_{s^\prime+1},\dots, E_{s}$ come from $\mu_2$.
Since $\mathrm{Supp}\ \mu^{*}(C+D)$ is not necessarily simple normal 
crossing, we proceed
until we get
$\pi\colon \widetilde{Y} \rightarrow Y$ given by a composition 
\begin{equation*}
  \widetilde{Y}=Y_r\xrightarrow{\pi_r} Y_{r-1} \xrightarrow{\pi_{r-1}}\cdots
  \xrightarrow{\pi_{r^\prime+1}} Y_{r^\prime}\xrightarrow{\pi_{r^\prime
   }}Y_{r^\prime-1}\xrightarrow{\pi_{r^\prime - 1 }} Y_{r^\prime -
  2}\rightarrow \cdots 
  \xrightarrow{\pi_1} Y_0 = Y\ ,
\end{equation*}
where
\begin{itemize}
	\item At each intermediate 
		step, $\pi_i$ is the blowing up of $Y_{i-1}$
		in a point where 
		$\mathrm{Supp}\ \pi_{i-1}^{*}\cdots
		\pi_1^{*}\mu^{*}(C+D)$ 
		is not simple
		normal crossing. 
	\item The first $r^\prime$  blow-ups are centered in
		points mapping to $S_1$.
	\item The remaining $(r-r^\prime)$ blow-ups are centered in
		points mapping to $S_2$.
	\item In the end, denote by $\pi = \pi_r \circ \cdots \circ \pi_1$,
		$\mathrm{Supp}\ \pi^{*}\mu^{*}(C+D)$
		is a simple normal crossing divisor in $\widetilde{Y}$ that
		we denote 
		by
		$\overline{C+D}$ 
		\footnote{A word of caution is in order here.
			Following Miyaoka,
		$\overline{C+D}$ does not denote the total transform of the
	curve $C+D$, and indeed it is a reduced divisor. On the other hand,
	$\overline{E}_{i}$ denotes the total transform in $\widetilde{Y}$ of
the exceptional curve $E_i$.}.
\end{itemize}
We will, moreover, use the following additional notation for the elements of the
above resolution.

{\small \setlength\extrarowheight{6pt}
\centering 
\begin{tabulary}{1.0\textwidth}{ C |  C }
    \textbf{Notation} & \textbf{Definition} \\
    $E_{s+i}$, $i=1,\ldots,r^\prime$ & Exceptional curve of the 
    blow-up $\pi_i\colon Y_i\rightarrow Y_{i-1}$,  
    $i=1,\ldots,r^\prime$ centered in
    points mapping to $S_1$. \\
    $E_{s+i}$, $i=r^\prime + 1,\ldots,r$ & Exceptional curve of the blow-up
    $\pi_i\colon Y_i\rightarrow Y_{i-1}$,  $i=r^\prime + 1,\ldots,r$
    centered in points mapping to $S_2$. \\
    $F_1,\ldots,
    F_{s^\prime},F_{s^\prime+1},\ldots,
    F_s \textrm{Div}(\widetilde Y)$ & Strict transforms in
    $\widetilde{Y}$ of the exceptional curves $E_1,\dots, E_s$, respectively.
    \\   
    $G_{1},\ldots, G_{r^\prime},
    G_{r^\prime + 1},\ldots , G_{r}\in \textrm{Div}(\widetilde Y)$ & 
    Strict transforms in
    $\widetilde{Y}$ of the exceptional curves $E_{s+1},\dots, E_{s+r}$, respectively.
    \\
    $\overline{E}_1,\ldots,
    \overline{E}_{s^\prime},\overline{E}_{s^\prime+1},\ldots, \overline{E}_s,$
    $\overline{E}_{s+1},\ldots, \overline{E}_{s+r^\prime},
    \overline{E}_{s+r^\prime + 1},\ldots , \overline{E}_{s+r}\in
    \textrm{Div}(\widetilde Y)$
    & 
    Total transforms in
    $\widetilde{Y}$ of the exceptional curves $E_{1},\dots, E_{s+r}$, respectively.
    Note that for the last blow-up
    $\overline{E}_{s+r} =
    E_{s+r} = G_{s+r}$.
    \\
        $F^\prime$ & $F^\prime = F_1+\cdots +F_{s^\prime}$ \\
    $F^{\prime \prime}$ & $F^{\prime \prime} = F_{s^\prime+1}+ \cdots + F_s$ \\ 
    $F$ &  $F = F^\prime + F^{\prime\prime}$ \\
    $G^\prime$ & $G^\prime = G_{1}+\cdots+ G_{r^\prime}$ \\
    $G^{\prime \prime}$ & $G^{\prime \prime} = G_{r^\prime + 1}+\cdots+ G_{r}$ \\
    $G$ & $G = G^\prime + G^{\prime \prime}$ \\
    $E^\prime$ & $E^\prime = F^\prime + G^\prime$ \\
    $E^{\prime \prime}$  & 
    $E^{\prime \prime} = F^{\prime \prime}+ G^{\prime \prime}$ \\
    $E$ & $E = E^\prime + E^{\prime \prime} =F+G$
\end{tabulary}
}

The exceptional locus of $\pi\circ \mu$ is equal to
$F + G$ and 
\begin{equation*}
  \overline{C+D}=\widetilde{C} + \widetilde{D} + F + G
\end{equation*}
where $\widetilde C, \widetilde D$ denote the strict
transforms of $C$ and $D$, respectively.
Given $\alpha \in [0,1]\cap \mathbb{Q}$, we can consider, see
Proposition 4.1.12 in \cite{lazarsfeld-positivityI}, a \emph{``Kawamata
covering''} of the couple 
$ \widetilde Y $, $\overline{C+D}$, namely
a finite covering $f \colon Z\rightarrow \widetilde Y$ such that
$ A_\alpha = (1-\alpha)f^* \widetilde C \preccurlyeq f^*\widetilde C $ is an 
integral divisor, $\mathrm{Supp}\ f^*\widetilde{C}$ is smooth and
$\mathrm{Supp}\ f^*
\overline{C+D}$ has simple normal crossings.
The natural residue map induces a 
surjection $$f^*\Omega^1_{\widetilde Y}
(\log \overline{C+D})\longrightarrow
{\mathcal{O}_{Z}}/{\mathcal{O}_{Z}(-A_\alpha)}
\rightarrow 0$$ define $ \mathcal{E}_\alpha $ as the kernel of this
map so that it fits in the exact sequence
\begin{equation}\label{equation:Ealphaexact}
  0 \rightarrow \mathcal{E}_\alpha \longrightarrow 
  f^*\Omega^1_{\widetilde Y}(\log \overline{C+D})\longrightarrow
  {\mathcal{O}_{Z}}/{\mathcal{O}_{Z}(-A_\alpha)}
  \rightarrow 0\ ,
\end{equation}
it follows from the preceding exact sequence that 
$\mathcal{E}_\alpha \subseteq f^*\Omega^1_{\widetilde Y}(\log
\overline{C+D})$ is locally free and of rank two. 

\begin{remark} \label{remark:Ealphaaspullback}
	In defining $\mathcal{E}_\alpha$ we have followed closely
	\cite{miyaoka-orbibundle}, but it is worth 
	noting that the end result is equal to
	$\widehat{f^*\Omega}^1_{\widetilde Y} \left(\log (\alpha
	\widetilde{C}+\widetilde{D}+E)\right)$, according to Langer's
	notation in \cite[\S 2, p.~365]{langer-orbifold}. This can be 
	checked in a rather straightforward manner by a computation 
	in local coordinates. To give the flavor, consider a point $p\in
	\mathrm{Supp}\ f^*\widetilde{C}$ that is also a smooth point of
	$\mathrm{Supp}\ (\alpha \widetilde{C}+ \widetilde{D} + E ) $. Then
	there exist local coordinates, $z^\prime$ around $f(p)$ such that
	$\widetilde{C}$ has equation $z^\prime =0$ and $z$ around $p$ such that
	$\mathrm{Supp}\ f^*\widetilde{C}$ has equation
	$z=0$. If $t=0$ is a local equation for $f^*(\alpha \widetilde{C})$
	around $p$ then
	we may suppose without loss of generality that $t=z^{\alpha m}$, where
	the integer $m> 0$ is such that $\alpha m$ is again integral. By
	definition of $\widehat{f^*\Omega}^1_{\widetilde Y} \left(\log (\alpha
	\widetilde{C}+\widetilde{D}+E)\right)$ and $\mathcal{E}_\alpha$ then:
	\begin{equation*}
		\begin{split}
			\widehat{f^*\Omega}^1_{\widetilde Y} & \left(\log (\alpha 
			\widetilde{C}+\widetilde{D}+E)\right) =
			\dfrac{f^* \dif z^\prime}{t}
			\mathcal{O}_{\widetilde{Y}} + f^*
			\Omega^1_{\widetilde{Y}} =
			\dfrac{ \dif f^* z^\prime}{t}
			\mathcal{O}_{\widetilde{Y}} + f^*
			\Omega^1_{\widetilde{Y}} = \\
			& \dfrac{ \dif z^m}{z^{\alpha m}}
			\mathcal{O}_{\widetilde{Y}} + f^*
			\Omega^1_{\widetilde{Y}} =
			\dfrac{ z^{m-1} \dif z}{z^{\alpha m}}
			\mathcal{O}_{\widetilde{Y}} + f^*
			\Omega^1_{\widetilde{Y}} = \dfrac{\dif z}{z}
			z^{m(1-\alpha)} \mathcal{O}_{\widetilde{Y}} + 
			f^*\Omega^1_{\widetilde{Y}} = \mathcal{E}_\alpha
		\end{split}
	\end{equation*}
\end{remark}
locally around $p$. The remaining cases are dealt with analogously.

\subsection{Chern classes of
\texorpdfstring{$\mathcal{E}_\alpha$}{Ea}}
We are going to express the Chern classes of $\mathcal{E}_\alpha$ in terms of 
log resolution data. Refer to \S \ref{subsection:notation} for the notation 
regarding resolution data. It is worth noting that these computations can be
performed directly by using \eqref{equation:Ealphaexact}, but in view of Remark
\ref{remark:Ealphaaspullback}, $\mathcal{E}_\alpha =\widehat{f^*\Omega}^1_{
\widetilde Y} \left(\log (\alpha \widetilde{C}+\widetilde{D}+E)\right) $ 
according to Langer's notation, see \cite[\S 2, 3]{langer-orbifold}, 
and then the
Chern numbers of $\mathcal{E}_\alpha$ are equal to the Chern numbers of the
pair
$(\widetilde{Y},\alpha \widetilde{C} + \widetilde{D} + E)$. In particular by
\cite[p.~368, Theorem 3.6]{langer-orbifold}, the computation of
$c_2(\mathcal{E}_\alpha)$ can be reduced to the computation of
$e_{\mathrm{orb}}(\widetilde{Y},\alpha \widetilde{C} + \widetilde{D} + E)$, 
the orbifold Euler number of the pair. In what follows we will pursue this
latter approach, tacitly adopting Langer's notation when necessary. 
Before proceeding further let us introduce:
\begin{notation}
For $i=1,\ldots, s+r$ let $m_i$, $\delta_i$ and $\epsilon_i$ non negative
integers such that 
\begin{equation}\label{equation:mi}
  \widetilde{C} = \pi^* \mu^* C - \sum_{i=1}^{s+r} m_i \overline{E}_i\ ,
\end{equation} 
\begin{equation} \label{equation:deltai}
  \widetilde{D} = \pi^* \mu^* D - \sum_{i=1}^{s+r} \delta_i
  \overline{E}_i\ , 
\end{equation}
\begin{equation}\label{equation:epsiloni}
  E = \sum_{i=1}^{s} \overline{E}_i - \sum_{j=s+1}^{s+r} \epsilon_j
  \overline{E}_j\ , 
\end{equation}
respectively. The above integers can be defined recursively, 
see the proof of Lemma
\ref{lemma:xed}, and $m_i\ge1$, $i=1,\ldots, s+r$, since at each step the blow
up occurs in a point of a strict transform of $C$.
\end{notation}
\begin{remark}
  Observe that 
  \begin{equation}\label{eqution:eprime}
    E^\prime = \sum_{i=1}^{s^\prime} \overline{E}_i -
    \sum_{j=s+1}^{s+r^\prime} \epsilon_j
    \overline{E}_j\ ,
  \end{equation}
  and 
  \begin{equation}\label{equation:esecond}
    E^{\prime \prime} = \sum_{i=s^\prime+1}^{s} \overline{E}_i - 
    \sum_{j=s+r^\prime+1}^{s+r} \epsilon_j
    \overline{E}_j\ ,
  \end{equation}
  moreover
  \begin{equation} \label{equation:deltaj}
    \delta_j=0\ \text{for $1\le j \le s^\prime$ and $ s+ 1\le j \le s+ 
    r^\prime $ .}
  \end{equation}
\end{remark}

\begin{lemma} \label{lemma:xed}
  There exist integers $x_i$, $i=1,\ldots,s+r$ such that: 
  \begin{equation}\label{equation:xi}
    K_{\widetilde{Y}}+\widetilde{D} + E= \pi^* \mu^* (K_X + D ) +
    \sum_{i=1}^{s+r}x_i \overline{E}_i 
  \end{equation}
  and  
  \begin{gather}
    x_i\ge 0\ \text{for $i=1,\ldots,s+r$}, \label{equation:xi2}\\  
    x_i=2\ \text{for $1\le i\le s^\prime$},\ x_i \le 1 
    \ \text{for $s^\prime + 1
    \le i \le s$ ,} \label{equation:xsmall} \\
    x_j+\delta_j+\epsilon_j = 1 \ \text{for $ s+1 \le j \le s+r$}
    \label{equation:xpdpe}
  \end{gather}
\end{lemma}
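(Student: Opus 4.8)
The plan is to deduce the identity \eqref{equation:xi} from the standard canonical bundle formula for a composition of point blow-ups and then to read off the coefficients $x_i$ directly; the inequalities \eqref{equation:xi2}--\eqref{equation:xsmall} will then follow from a geometric reading of $\delta_i$ and $\epsilon_i$ together with the fact that, \emph{away from} $C$, the resolution keeps the boundary in simple normal crossing position. I would first recall that for $\pi\circ\mu\colon\widetilde{Y}\to X$, a composition of blow-ups at (infinitely near) smooth points, one has
\begin{equation*}
  K_{\widetilde{Y}} = \pi^*\mu^* K_X + \sum_{i=1}^{s+r}\overline{E}_i\ ,
\end{equation*}
the coefficient of every total transform $\overline{E}_i$ being exactly $1$: each single blow-up adds its exceptional curve to the canonical class, and pulling back through the subsequent blow-ups turns these into total transforms. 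Adding $\widetilde{D}$ and $E$ and substituting \eqref{equation:deltai} and \eqref{equation:epsiloni} gives
\begin{equation*}
  K_{\widetilde{Y}}+\widetilde{D}+E
  = \pi^*\mu^*(K_X+D) + \sum_{i=1}^{s}(2-\delta_i)\overline{E}_i
  + \sum_{j=s+1}^{s+r}(1-\delta_j-\epsilon_j)\overline{E}_j\ ,
\end{equation*}
so \eqref{equation:xi} holds with $x_i=2-\delta_i$ for $1\le i\le s$ and $x_j=1-\delta_j-\epsilon_j$ for $s+1\le j\le s+r$. In particular the $x_i$ are integers and \eqref{equation:xpdpe} is immediate.

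Next I would attach a geometric meaning to the two families of coefficients. Working in the total transform basis, \eqref{equation:deltai} identifies $\delta_i$ with the multiplicity at the centre $p_i$ of the strict transform of $D$ on the surface blown up at the $i$-th step, while \eqref{equation:epsiloni} shows that $\epsilon_j+1$ equals the number of exceptional curves (strict transforms of earlier $E_k$) passing through $p_j$, these strict transforms being smooth and hence of multiplicity at most one at any point. With this dictionary the desired inequalities become bounds on local multiplicities: for $1\le i\le s$ one needs $\delta_i\le 2$, and for $s+1\le j\le s+r$ one needs $\delta_j+\bigl(\#\{\text{exceptional curves through }p_j\}\bigr)\le 2$.

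The crux, and the only non-formal step, is therefore this multiplicity bound. Write $B$ for the auxiliary boundary obtained by \emph{omitting} $C$: the strict transform of $D$ together with all exceptional curves created so far, each taken reduced. I would prove by induction on the number of blow-ups that $B$ stays a simple normal crossing divisor throughout the resolution. The base case is the hypothesis that $D$ is simple normal crossing on $X$; for the inductive step one uses that blowing up an arbitrary point of a surface carrying a simple normal crossing divisor again produces one — the new exceptional curve meets each local branch transversally, and a node of $B$ is separated into two transverse intersections with the exceptional curve. This is insensitive to \emph{why} the centre was chosen: although the $p_j$ are precisely the points where $C+D$ fails to cross normally, $B$ is simple normal crossing at every point of every intermediate surface. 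Consequently the multiplicity of $B$ at each centre is at most $2$; at $p_j$ this multiplicity equals $\delta_j+\epsilon_j+1$, giving $\delta_j+\epsilon_j\le 1$ and hence $x_j\ge 0$, while at a centre $p_i$ with $1\le i\le s$, where no earlier exceptional curve passes, it equals $\delta_i\le 2$ and hence $x_i=2-\delta_i\ge 0$.

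Finally \eqref{equation:xsmall} follows by splitting the first $s$ centres according to their image: those over $S_1$ avoid $D$, so $\delta_i=0$ and $x_i=2$ for $1\le i\le s'$, whereas those over $S_2$ lie on $C\cap D\subseteq D$, so $\delta_i\ge 1$ and $x_i\le 1$ for $s'+1\le i\le s$; here one uses that the centres $\mu_1^{-1}(S_2)$ are disjoint from the $\mu_1$-exceptional locus since $S_1\cap S_2=\varnothing$. I expect the bookkeeping relating the total-transform coefficients $\delta_j,\epsilon_j$ to the honest local multiplicities and curve counts to be the most error-prone part, the simple normal crossing induction itself being routine.
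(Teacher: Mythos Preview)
Your proof is correct and follows essentially the same route as the paper. The only organizational difference is that you invoke the closed-form identity $K_{\widetilde{Y}}=\pi^*\mu^*K_X+\sum_i\overline{E}_i$ at the outset and read off all the $x_i$ simultaneously, whereas the paper builds the formula one blow-up at a time; the substantive step in both arguments is the same observation that the auxiliary boundary $B=\widetilde{D}+E$ remains simple normal crossing after each blow-up, which forces $\delta_j+\epsilon_j\le 1$ for $j>s$.
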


\begin{proof}
  For $k=0,\ldots ,r$, denote by $D^k$ the strict transform of
  $D$ in $Y_k$ and by $E_i^k$, $\overline{E}_i^k$, $i=1,\ldots, s+k$,
  the strict transform and
  the total transform in $Y_k$ of the exceptional curve $E_i$, $i=1,\dots k$
  respectively. Where in particular 
  $\overline{E}^k_{s+k}=E^k_{s+k}$ and $ \overline{E}^0_i = E^0_i$, $i=1,\dots s$.
  It follows that for $k=0$
  \begin{equation*}
    D^0 = \mu^* D - \sum_{i=1}^{s} \delta_i
    \overline{E}^0_i
  \end{equation*}
  and for $k\ge 1$
  \begin{equation*}
    D^k = \pi_k^* \ldots \pi_1^* \mu^* D - \sum_{i=1}^{s+k} \delta_i
    \overline{E}^k_i\ ,
  \end{equation*}
  moreover
  \begin{equation*}
    E^k = \sum_{i=1}^{s} \overline{E}^k_i - \sum_{j=s+1}^{s+k} \epsilon_j
    \overline{E}^k_j\ ,
  \end{equation*}
  where we put $E^k = \sum_{i=1}^{s+k} E_i^k$. By the formula
  for the canonical divisor of a blow-up there exist integers $x_1,\ldots,
  x_s$ such that:
  \begin{equation} \label{equation:}
    K_{Y_0} + D^0 + E^0 = \mu^* \left( K_X + D \right) +
    \sum_{i=1}^{s} x_i \overline{E}_i^{0}\ ,
  \end{equation}
  and 
  \begin{equation*}
    x_i = \begin{cases}
      2 & 1\le i \le s^\prime \\
      0 & \text{$s^\prime + 1 \le i \le s$, and $E_i^0$ maps onto a double 
	point of 
      $D$} \\
      1 & \text{$s^\prime + 1 \le i \le s$, and $E^0_i$ maps onto a 
      smooth point of $D$ } \ .
      \end{cases}
  \end{equation*}
  Arguing now by induction, put $\pi_0 \coloneqq id_Y$, suppose that for $k\ge
  0$ we have
  \begin{equation*}
    K_{Y_k} + E^k + D^k = \pi^*_k\dots \mu^* 
    \left( K_X + D \right) + \sum_{i=1}^{s+k} x_i \overline{E}_i^{k}
  \end{equation*}
  then pulling back both sides of the above equality by $\pi_{k+1}$ 
  we obtain
  \begin{equation*}
    K_{Y_{k+1}} + E^{k+1} + D^{k+1} + \left(
    \epsilon_{s+k+1}+ \delta_{s+k+1} -1 \right) E_{s+k+1}^{k+1}
    = \pi^*_{k+1}\dots \mu^* 
    \left( K_X + D \right) + \sum_{i=1}^{s+k} x_i \overline{E}_i^{k+1}   
  \end{equation*}
  and since $\overline{E}_{s+k+1}^{k+1}= E_{s+k+1}^{k+1}$ 
  \begin{equation*}
    K_{Y_{k+1}} + E^{k+1} + D^{k+1}
    = \pi^*_{k+1}\dots \mu^* 
    \left( K_X + D \right) + \sum_{i=1}^{s+k+1} 
    x_i \overline{E}_i^{k+1}   
  \end{equation*}
  with $x_{s+k+1} = \left( 1 - \epsilon_{s+k+1}- \delta_{s+k+1} \right) $.
  Observe that $ \epsilon_{s+k+1}+ \delta_{s+k+1}\le 1 $, indeed after
  the first $s$ blow-ups, the center of the blow-up can not be a point where
  a component of the strict transform of $D$ and two exceptional curves
  meet since $D$ is assumed to be simple normal crossing. This
  concludes the proof.
\end{proof}

\begin{notation}
	$g(C)$, $g(D_k)$ denote the  
	geometric genus of $C$, $D_k$, respectively.
\end{notation}

\begin{remark} \label{remark:explicitep}
In view of Definition \ref{notation:relativepoincare}, since
$D$ is simple normal crossing and the topological 
Euler-Poincar\'e characteristic is
additive under disjoint unions, we have 
\begin{equation}\label{equation:epopensurface}
	e_{X\setminus D}= e_{\mathrm{top}}(X)-\sum_i \left(2-2g(D_i)\right)+
  \sum_{i<j}D_i\cdot D_j\ ,
\end{equation}
in other words $e_{X\setminus D} = e_{\mathrm{orb}}(X\setminus D)$, see for instance
\cite[pag.~359]{langer-orbifold}.
Moreover considering the normalization $\eta \colon \widetilde{C}\rightarrow C$ 
obtained by restricting 
$\pi\circ \mu$ to $\widetilde{C}$, then
\begin{equation}\label{equation:epopencurve}
	e_{C\setminus D} = e_{\mathrm{top}}\left( \widetilde{C }\setminus
	\eta^{-1}(D)\right) =e_{\mathrm{top}}( \widetilde{C} )- 
  \sharp \left( \eta^{-1}(D)\right) 
  =2-2g(C)-(E^{\prime
  \prime}+\widetilde{D})\cdot \widetilde{C}\ ,
\end{equation}
namely $e_{X\setminus D}$, $e_{C\setminus D}$ are both orbifold Euler numbers. 
\end{remark}

\begin{lemma}\label{lemma:chernresoldata}
  \begin{gather*}
    c_2(\mathcal{E}_\alpha) = d \left( e_{X\setminus D} - \alpha 
    e_{C\setminus D} + \sum_{i=1}^{s^\prime}(\alpha m_i
    -1) - \sum_{j=s+1}^{s+r^\prime}\alpha m_j \epsilon_j \right) \\
    c_1( \mathcal{E}_\alpha) = f^* \left( \pi^* \mu^*
    (K_X+D + \alpha C) + \sum_{i=1}^{s+r} (x_i -\alpha m_i) 
    \overline{E}_i 
    \right)
  \end{gather*}
  and 
  \begin{equation*}
    c^2 _1(\mathcal{E}_\alpha)= d \left( (K_X+D)^2 + 2 \alpha ( K_X + D ) \cdot 
      C +
    \alpha^2 C^2 - \sum_{i=1}^{s+r}( x_i - \alpha m_i )^2 \right)\ ,
  \end{equation*}
  where $d$ denotes the degree of the finite map $f$.
\end{lemma}

\begin{proof}
	Let us start by computing the second Chern class. By Remark 
	\ref{remark:Ealphaaspullback}, $\mathcal{E}_\alpha =
	\widehat{f^*\Omega}^1_{\widetilde Y} \left(\log 
	(\alpha \widetilde{C}+\widetilde{D}+E)\right) $ and then by
	\cite[Theorem 3.6]{langer-orbifold} we have that 
	\begin{equation*}
		c_2(\mathcal{E}_\alpha) = d\ e_{\mathrm{orb}}(\widetilde{Y}, 
			\alpha
		\widetilde{C} + \widetilde{D} + E)\ .
	\end{equation*}
	Denote by $\mathcal{S}$ the set of singular points of
	$\mathrm{Supp}\ \alpha \widetilde{C} + \widetilde{D} + E$ and observe
	that in the present situation local orbifold Euler numbers are zero.
	Indeed, local orbifold Euler numbers depend only on the analytic type
	of the singularity \cite[p.~366, 3.2]{langer-orbifold}, and they are
	zero in the case of normal crossings \cite[Lemma 3.3]{langer-orbifold}.
	Since each blow-up increases the topological Euler number of the surface
	by one,
	it follows that \cite[Definition 3.1]{langer-orbifold} reads
	\begin{align*}
		e_{\mathrm{orb}}&(\widetilde{Y}, \alpha \widetilde{C} +
		\widetilde{D} + E ) = e_{\mathrm{top}}(X) + r + s +
		\sum_{k}(2-2g(D_k)) -2r -2s \\
		& + 2\ \sharp\ ( \mathcal{S} \setminus
		\widetilde{C} ) + \sharp\ (\mathcal{S}\cap \widetilde{C})
		-\alpha \left( 2 - 2g(C) - \sharp\ (\mathcal{S} \cap
		\widetilde{C}) \right) - \sharp\ \mathcal{S} =\\
		& e_{\mathrm{top}}(X) +
		\sum_{k}(2-2g(D_k)) -r -s 
		+  \sharp\ ( \mathcal{S} \setminus
		\widetilde{C} )
		-\alpha \left( 2 - 2g(C) - \sharp\ (\mathcal{S} \cap
		\widetilde{C}) \right)\ .
	\end{align*}
	The number of double points of $E$ is equal to $r$ by construction
	and then
	\begin{equation*}
	       \sharp\ (\mathcal{S} \setminus
	       \widetilde{C}) = r + E\cdot \widetilde{D} + 
	       \sum_{i<j}\widetilde{D}_i \cdot \widetilde{D}_j\ ,
        \end{equation*}
	as a consequence we have
	\begin{equation} \label{equation:chernea}
		\begin{split}
		c_2&(\mathcal{E}_\alpha)= 
		d \left( e_{\mathrm{top}}(X) - s - \sum_k (2- 2g(D_k)) +
		E \cdot \widetilde{D}+ \right. \\
		& \left. \sum_{i<j}\widetilde{D}_i
			\cdot \widetilde{D}_j +
			\alpha \left( 2 - 2g(C) \right)+ \alpha
			( E+\widetilde{D}
		)\cdot \widetilde{C} \right) \ .
	        \end{split}
	\end{equation}
  Observe that $ \sum_{i<j}D_i \cdot D_j $ equals the number of double points
  of $D$. Each blow-up in a double point of $D$ increases $E\cdot
  \widetilde{D}$ by two and a blow-up in a smooth point by one. It follows that
  \begin{equation*}
    \sum_{i<j}D_i \cdot D_j = E \cdot \widetilde{D}-(s-s^\prime)+
    \sum_{i<j}\widetilde{D}_i \cdot \widetilde{D}_j
  \end{equation*}
  and then in view of 
  \eqref{equation:epopensurface}, \eqref{equation:epopencurve} and
  \eqref{equation:chernea} we finally have
  \begin{equation*}
    c_2(\mathcal{E}_\alpha) = d\left( e_{X\setminus D} - \alpha
    e_{C\setminus D} + \alpha E^\prime \cdot 
    \widetilde{C} - s^\prime \right)\ . 
  \end{equation*}
  Since
  \begin{equation*}
    E^\prime \cdot \widetilde{C} = \left( \sum_{i=1}^{s^\prime}
    \overline{E}_i - \sum_{j=s+ 1}^{s+r^\prime}\epsilon_j
    \overline{E}_j \right) \cdot \left( \pi^* \mu^* C - \sum_{i=1}^{s+r} m_i
    \overline{E}_i \right ) = \sum_{i=1}^{s^\prime} m_i -
    \sum_{j=s+1}^{s+r^\prime} m_j \epsilon_j
  \end{equation*}
  we can conclude that 
  \begin{equation*}
    c_2(\mathcal{E}_\alpha) = d \left( e_{X\setminus D} - \alpha
    e_{C\setminus D} + \sum_{i=1}^{s^\prime}(\alpha m_i
    -1) - \sum_{j=s+1}^{s+r^\prime}\alpha m_j \epsilon_j \right)\ .
  \end{equation*}
  Regarding $c_1(\mathcal{E}_\alpha)$, note that $c_1(\mathcal{E}_\alpha) 
  = f^* \left( K_{\widetilde{Y}} + \widetilde{D} + E + \alpha \widetilde{C} 
  \right)$. For instance, this can be deduced by a direct computation from
  \eqref{equation:Ealphaexact}. 
  In view of
  \eqref{equation:mi} and \eqref{equation:xi} we
  have that 
  \begin{align*}
    c_1(\mathcal{E}_\alpha) =& f^* \left( K_{\widetilde{Y}} +
    \widetilde{D} + E + \alpha \widetilde{C} \right) \\
    =& f^* \left( \pi^* \mu^*
    (K_X+D) + \sum_{i=1}^{s+r} x_i \overline{E}_i + \alpha \pi^* \mu^* C -
    \sum_{i=1}^{s+r} \alpha m_i \overline{E}_i \right) \\
    =& f^* \left( \pi^* \mu^*
    (K_X+D + \alpha C) + \sum_{i=1}^{s+r} (x_i -\alpha m_i) \overline{E}_i 
    \right)
  \end{align*}
  and then 
  \begin{equation*}
    c^2 _1(\mathcal{E}_\alpha) = d \left( (K_X+D)^2 + 2 \alpha ( K_X + D )\cdot
      C +
    \alpha^2 C^2 - \sum_{i=1}^{s+r}( x_i - \alpha m_i )^2 \right)\ .
  \end{equation*}
\end{proof}

\subsection{Adjunction formula}
Following the notation introduced in the proof of Lemma \ref{lemma:xed}, 
let us denote by $D^0$, $C^0$ the
strict transforms by the blow-up $\mu$ 
of $D$ and $C$, respectively. We have
\begin{gather}
  \widetilde{D} = \pi^* D^0 - \sum_{j=s+1}^{s+r}\delta_j
  \overline{E}_j = \pi^* D^0 - \sum_{j=s+r^\prime+1}^{s+r}
  \delta_j
  \overline{E}_j\label{equation:d0}\ , \\
  \widetilde{C} = \pi^* C^0 - \sum_{j=s+1}^{s+r} m_j
  \overline{E}_j \label{equation:c0}\ ,
\end{gather}
where the second equality in \eqref{equation:d0} follows by
\eqref{equation:deltaj}.

\begin{proposition} \label{proposition:adjunction}
  \begin{equation*}
    (K_X+  D) \cdot C + C^2  =
     -e_{C\setminus D} 
    +\sum_{i=1}^{s} m_i(m_i-x_i+1) + 
      \sum_{j=s+1}^{s+r} m_j (m_j - 1) 
       + \sum_{j=s+r^\prime + 1}^{s+r} m_j (\epsilon_j + \delta_j)\ . 
  \end{equation*}
\end{proposition}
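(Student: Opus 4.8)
The plan is to compute both sides of the asserted identity by pulling everything back to the resolution $\widetilde{Y}$, where adjunction holds in its ordinary smooth form, and then to express the exceptional contributions in terms of the multiplicities $m_i$ and the discrepancies $x_i$ already recorded in Lemma \ref{lemma:xed}. The natural starting point is the adjunction formula on $\widetilde{Y}$ applied to the smooth curve $\widetilde{C}$, namely
\begin{equation*}
  2g(C)-2 = \left(K_{\widetilde{Y}}+\widetilde{C}\right)\cdot \widetilde{C},
\end{equation*}
since $\widetilde{C}$ is the normalization of $C$. I would then rewrite the left-hand side $(K_X+D)\cdot C + C^2$ using the projection formula: since $\pi^*\mu^*(K_X+D)$ and $\pi^*\mu^* C$ pull back the intersection numbers on $X$, one has $(K_X+D)\cdot C = \pi^*\mu^*(K_X+D)\cdot \pi^*\mu^* C$ and $C^2 = (\pi^*\mu^* C)^2$. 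The task is to replace $\pi^*\mu^* C$ by $\widetilde{C}+\sum m_i \overline{E}_i$ (equation \eqref{equation:mi}) and $\pi^*\mu^*(K_X+D)$ by $K_{\widetilde{Y}}+\widetilde{D}+E - \sum x_i \overline{E}_i$ (equation \eqref{equation:xi}), and to bookkeep the resulting exceptional cross-terms.

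Concretely, I would proceed in the following order. First, expand
\begin{equation*}
  (K_X+D)\cdot C + C^2 = \pi^*\mu^*(K_X+D+C)\cdot \pi^*\mu^* C,
\end{equation*}
and substitute the two expressions above for the pullbacks. Using $\overline{E}_i\cdot \widetilde{C}=0$ (the total transform meets the strict transform trivially in the relevant sense, or more precisely $\pi^*\mu^* C\cdot \overline{E}_i = 0$) together with the orthogonality relations $\overline{E}_i\cdot \overline{E}_j=-\delta_{ij}$ for total transforms, the cross terms collapse to sums of $m_i$, $x_i$, $\delta_i$, $\epsilon_i$. Second, I would feed in the adjunction identity on $\widetilde{Y}$ to eliminate $(K_{\widetilde{Y}}+\widetilde{C})\cdot\widetilde{C}$ in favour of $2g(C)-2$, and then invoke Remark \ref{remark:explicitep}, specifically \eqref{equation:epopencurve}, to convert the combination $2-2g(C)-(E''+\widetilde{D})\cdot\widetilde{C}$ into $-e_{C\setminus D}$. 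The relations \eqref{equation:xpdpe} and \eqref{equation:deltaj} should then let me split the index range $s+1\le j\le s+r$ correctly, distributing the $\delta_j$ and $\epsilon_j$ contributions into the final three sums as stated, with the split at $j=s+r'$ reflecting whether the blow-up center sits over $S_1$ or $S_2$.

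The main obstacle I anticipate is the careful bookkeeping of the exceptional divisors in the two index regimes: the curves $E_1,\dots,E_s$ arising from $\mu$ versus the curves $E_{s+1},\dots,E_{s+r}$ arising from $\pi$, and within each the further split according to whether the center maps over $S_1$ or $S_2$. The coefficients $x_i$ behave differently in these ranges (equal to $2$ for $i\le s'$, at most $1$ for $s'<i\le s$, and satisfying $x_j+\delta_j+\epsilon_j=1$ for $j>s$), so the term $m_i(m_i-x_i+1)$ for $i\le s$ and $m_j(m_j-1)$ for $j>s$ emerge only after substituting these constraints. The trickiest piece is the residual sum $\sum_{j=s+r'+1}^{s+r} m_j(\epsilon_j+\delta_j)$, which should appear precisely because on that range $x_j = 1-\delta_j-\epsilon_j$ forces an extra correction that does not cancel, whereas for $s+1\le j\le s+r'$ the vanishing \eqref{equation:deltaj} of $\delta_j$ together with the contribution of $\epsilon_j$ is already absorbed into the $m_j(m_j-1)$ term. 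I would organize the computation as a single chain of equalities, tracking each index range in a separate sum, and the identity should close once all the discrepancy relations are substituted.
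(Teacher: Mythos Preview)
Your plan is correct and is essentially the same argument the paper gives: both proofs pivot on the identity
\[
  (K_X+D)\cdot C + C^2 = (K_{\widetilde Y}+\widetilde D+E+\widetilde C)\cdot\widetilde C \;+\; \sum_{i=1}^{s+r} m_i(m_i-x_i),
\]
obtained exactly as you describe from \eqref{equation:xi} and \eqref{equation:mi}, then split off $(K_{\widetilde Y}+\widetilde C)\cdot\widetilde C=2g(C)-2$ by adjunction and convert to $-e_{C\setminus D}$ via \eqref{equation:epopencurve}, with \eqref{equation:xpdpe} and \eqref{equation:deltaj} handling the index ranges. The only organizational difference is that the paper routes the computation of $E\cdot\widetilde C$ and $\widetilde D\cdot\widetilde C$ through the intermediate surface $Y$ (using $D^0$, $C^0$ from \eqref{equation:d0}--\eqref{equation:c0}) and then evaluates $(E''+\widetilde D)\cdot\widetilde C$ separately, whereas you expand everything directly on $\widetilde Y$ using $\overline E_i\cdot\overline E_j=-\delta_{ij}$; the two organizations are equivalent.

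One small slip to fix when you write it out: the equality $\overline E_i\cdot\widetilde C=0$ is false (in fact $\overline E_i\cdot\widetilde C=m_i$); the correct orthogonality, which you state immediately afterwards, is $\pi^*\mu^*C\cdot\overline E_i=0$, together with $\overline E_i\cdot\overline E_j=-\delta_{ij}$. Use those two relations and the bookkeeping goes through as you outlined.
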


\begin{proof}
  By \eqref{equation:xi} and \eqref{equation:mi} we have
  \begin{equation}\label{equation:proofpropadj1}
    ( K_{\widetilde{Y}} + \widetilde{D} + E + \widetilde{C} ) \cdot
    \widetilde{C} = ( K_X + D ) \cdot C + C^2 + \sum_{i=1}^{s+r}m_i (
    x_i - m_i)\ ,
  \end{equation}
  observe that by adjunction:
  \begin{equation*}
   2 g(C) - 2 =
   ( K_{\widetilde{Y}}+ \widetilde{C} ) \cdot \widetilde{C}\ ,
  \end{equation*}
  moreover
  by \eqref{equation:mi} and \eqref{equation:epsiloni}: 
  \begin{equation*}
    E\cdot \widetilde{C} = \left(\sum_{i=1}^{s} \overline{E}_i - 
      \sum_{j=s+1}^{s+r} 
    \epsilon_j
  \overline{E}_j \right) \cdot \left(\pi^* \mu^* C - \sum_{i=1}^{s+r} m_i
  \overline{E}_i \right) =  \sum_{i=1}^{s}m_i -\sum_{j=s+1}^{s+r}m_j 
  \epsilon_j\ ,
  \end{equation*}
  and by \eqref{equation:d0} and \eqref{equation:c0}: 
  \begin{equation*}
    \widetilde{D}\cdot \widetilde{C} = \left(\pi^* D^0 - 
    \sum_{j=s+1}^{s+r}\delta_j
  \overline{E}_j  \right) \cdot \left( \pi^* C^0 - 
  \sum_{j=s+1}^{s+r} m_j
  \overline{E}_j \right) = D^0\cdot C^0 -
  \sum_{j=s+1}^{s+r}m_j \delta_j\ .
  \end{equation*}
  Substituting the above equalities and \eqref{equation:xpdpe}, 
  \eqref{equation:epopencurve}
  in
  \eqref{equation:proofpropadj1} we get:
  \begin{equation*}
    ( K_X+ D) \cdot  C + C^2 =
    - e_{C\setminus D} + 
    D^0 \cdot C^0 - (E^{\prime
    \prime}+\widetilde{D}) \cdot \widetilde{C} + \\
     \sum_{i=1}^{s}m_i(m_i-x_i+1) +
    \sum_{j=s+1}^{s+r} m_j (m_j - 1)\ ,
  \end{equation*}
  but by \eqref{equation:esecond}, \eqref{equation:d0} and
  \eqref{equation:c0}:
  \begin{equation*}
    \begin{split}
      (E^{\prime \prime} & + \widetilde{D}) \cdot \widetilde{C} =  \\
      & \left( \sum_{i=s^\prime+1}^{s} \overline{E}_i - 
	\sum_{j=s+r^\prime+1}^{s+r} \epsilon_j
	\overline{E}_j + \pi^* D^0 - \sum_{j=s+1+r^\prime}^{s+r}
	\delta_j
      \overline{E}_j \right) \cdot \left( \pi^* 
	C^0 - \sum_{j=s+1}^{s+r}\delta_j
      \overline{E}_j \right) = \\
      & D^0 \cdot C^0 -
      \sum_{j= s+ r^\prime+1 }^{s+r} m_j (\epsilon_j + \delta_j)\ ,
    \end{split}
  \end{equation*}
  and this concludes the proof of the Proposition.
\end{proof}

\section{Proof of Theorem \ref{theorem:main} }
Given $X$, $D$, $C$ and $\alpha$ as in the statement of
Theorem \ref{theorem:main}, let us apply the construction of Section
\ref{section:mainconstruction}, refer to  
\S \ref{subsection:notation} 
for the notation regarding resolution data. 
We obtain a smooth surface $\widetilde{Y}$, 
a finite morphism $f\colon Z\rightarrow \widetilde{Y}$,
an orbibundle $\mathcal{E}_{\alpha}$
defined on $Z$, etc. 
Consider the divisor
\begin{equation} \label{equation:dalpha}
  \begin{split}
    D_{\alpha} & \coloneqq K_{\widetilde{Y}}+\widetilde{D}+E+\alpha
    \widetilde{C} = \pi^* \mu^* (K_X + D ) +
    \sum_{i=1}^{s+r}x_i \overline{E}_i + \alpha \widetilde{C} \\
    & = 
    \pi^* \mu^*
    (K_X+D + \alpha C ) + \sum_{i=1}^{s+r} (x_i -\alpha m_i) 
    \overline{E}_i 
  \end{split}
\end{equation}
where the second equality follows by \eqref{equation:xi} and the third one by
\eqref{equation:mi}. Since by hypothesis $K_X+D$ is $\mathbb{Q}$-effective, 
it follows
by \eqref{equation:xi2} and the second equality in \eqref{equation:dalpha} that
$D_\alpha$ is $\mathbb{Q}$-effective, then in turn the integral divisor
$c_1(\mathcal{E}_\alpha)=f^*D_\alpha$ is
$\mathbb{Q}$-effective and admits Zariski decomposition. In order to prove
Theorem \ref{theorem:main}, we might apply Theorem \ref{theorem:bmy} to the
bundle $\mathcal{E}_\alpha$. Indeed by Lemma \ref{lemma:chernresoldata}
we have 
\begin{equation}\label{equation:proofattempt}
  \begin{split}
    \frac{1}{d} & \left( 3c_2(\mathcal{E}_\alpha)-
    c^2_1(\mathcal{E}_\alpha ) \right)= 
    3 e_{X\setminus D} - (K_X+D)^2 - 2 \alpha \left[ (K_X+D) \cdot C +
    \frac{3}{2}e_{C\setminus D} \right] - \alpha^2 C^2 \\
    & +\sum_{i=1}^{s^\prime}\left( 3(\alpha m_i-1)+(x_i-\alpha m_i)^2\right)+
    \sum_{i=s^\prime+1}^{s} (x_i-\alpha m_i)^2 
    +\sum_{j=s+1}^{s+r^\prime}\left( -3\alpha m_j \epsilon_j 
    + (x_i-\alpha m_i)^2  \right) \\
    & + \sum_{j=s+r^\prime + 1}^{s+r} (x_i-\alpha
    m_i)^2 
  \end{split}
\end{equation}
and we might find a suitable upper bound for the sums in the right hand of
the above
inequality and finally replace it with the aid of
Proposition \ref{proposition:adjunction}. 
But from the third equality in
\eqref{equation:dalpha}, $D_\alpha$ is  not in general nef  
on $E$ and analogous considerations hold for 
$c_1(\mathcal{E}_\alpha)$ and $\mathrm{Supp}\ f^{*}(E)$. 
Moreover, 
it is not clear how to, if possible at all, bound
the summands on the right hand side of \eqref{equation:proofattempt}
by the summands on the right hand side of the equality in Proposition
\ref{proposition:adjunction}. In order to avoid the first problem, 
we could add the term
$\frac{f^* \overline{N}_\alpha^2}{4}$ on the left hand side of 
\eqref{equation:proofattempt},  where $\overline{N}_\alpha$ 
stands for the negative part of
the absolute Zariski decomposition of $D_\alpha$, and consequently subtract
\footnote{Recall that $\overline{N}^2_\alpha<0$ .}
a corresponding
quantity to the summands on the right-hand side of the equality. 
But it turns out that the
denominator $4$ is too big to obtain the desired bound. In order to 
get rid of this denominator for as many terms as possible, following
Miyaoka,  
we first perform a nef reduction of
$\mathcal{E}_\alpha$ contracting a part of the negative locus 
and finally apply Theorem \ref{theorem:bmy} to the resulting 
bundle. 

Consider the Zariski decomposition of $D_\alpha = P_{G+F^{\prime
\prime}} \left( D_\alpha \right) +
N_{G+F^{\prime\prime}}\left( D_{\alpha} \right)$ into its 
$(G+F^{\prime \prime})$-nef and $(G+F^{\prime
\prime})$-negative part, respectively. 
In order to simplify notation set $P_\alpha =  P_{G+F^{\prime
\prime}} \left( D_\alpha \right) $ and $N_\alpha
=N_{G+F^{\prime\prime}}\left( D_{\alpha} \right) $.
By unicity of the Zariski
decomposition we have that $ f^* D_\alpha = f^* P_\alpha +
f^* N_{\alpha} $ gives the Zariski decomposition of $c_1(
\mathcal{E}_\alpha ) = f^* D_\alpha$ into its $(\mathrm{Supp}\ f^{*}(G+F^{\prime
\prime}))$-nef and $(\mathrm{Supp}\ f^{*}(G+F^{\prime \prime}))$-negative 
part, respectively.  
We are going to perform a nef reduction of $\mathcal{E}_\alpha$ 
according to Proposition \ref{proposition:nefreduction}, but
observe that the effective $\mathbb{Q}$-divisor 
$f^*N_\alpha$ with simple normal crossing support in not a priori
necessarily integral.
Consider then $\hat{f}$, the composition of $f$ 
and a Kawamata covering, \cite[4.1.12]{lazarsfeld-positivityI}, 
such that $\hat{f}^*N_\alpha$ is an integral divisor,
$\mathrm{Supp}\ \hat{f}^{*}(\overline{C+D})$ 
has again simple normal crossings\footnote{Observe 
  indeed that the support of $N_\alpha$ is 
  contained in the support of
$\overline{C+D}$ .} and
$\hat{f}^* \mathcal{E}_\alpha\subseteq 
\hat{f}^*\Omega_Z(\log \overline{C+D}) \subseteq \Omega_{Z}(\log(
\mathrm{Supp}\ \hat{f}^{*}(\overline{C+D})))$. With a slight abuse
of notation we will still
denote $\hat{f}$ by $f$ and $\hat{f}^*\mathcal{E}_\alpha$ by
$\mathcal{E}_\alpha$ since this does not generate any confusion.
Indeed observe that all the results in Section
\ref{section:mainconstruction} are still valid with
$\hat{f}^*\mathcal{E}_\alpha$ in place of $\mathcal{E}_\alpha $ and
$\hat{f}$ in place of $f$.
Let us apply Proposition \ref{proposition:nefreduction} to the map 
$\rho \coloneqq \mu_2 \circ \pi \circ f$, $\rho\colon Z \rightarrow Y^\prime$, 
and the rank two vector bundle 
$$\mathcal{E}_\alpha \subseteq
f^*\Omega_{\widetilde{Y}} (\log 
\overline{C+D} ) \subseteq \Omega_{Z}(\log 
(\mathrm{Supp}\ f^{*}(\overline{C+D})))\ .$$ 
With the notation introduced in the Proposition, if we set 
$$\Gamma = E_1 +
\cdots E_{s^\prime}$$ 
and 
$$\Lambda = \Gamma+\mathrm{Supp}\ \mu_2^{*}(D)$$
the exceptional
locus of $\rho$ is the simple normal crossing divisor 
$$R =
\mathrm{Supp}\ f^{*}(F^{''}+G)\ ,$$
that in turn is contained in the simple normal crossing
divisor 
$$\Delta =\mathrm{Supp}\ f^{*}(\overline{C+D})$$ 
and moreover $\rho ( R )\subset
\Lambda $. Note that
\begin{equation*}
  \rho^{*} \Omega_{Y^\prime}( \log\
  \Lambda ) \subseteq 
  \Omega_{Z}( \log(\mathrm{Supp}\ \rho^{*} 
  ( \Lambda ) ) \subseteq \mathcal{E}_\alpha\
  ,
\end{equation*}
since $\mathrm{Supp}\ \rho^{*} ( \Lambda ) =\mathrm{Supp}\ ( \mu_2 \circ \pi
\circ f )^{*} 
(\Gamma+\mathrm{Supp}\ \mu_2^{*}(D) )
$ does not contain $\mathrm{Supp}\ f^{*}\widetilde{C}$. 
We obtain a rank two vector bundle $\widetilde{\mathcal{E}}_\alpha
=\mathfrak{P}_{\mathrm{Supp}\ f^{*}(F^{''}+G)}( \mathcal{E}_\alpha )$ 
such that $c_1( \widetilde{\mathcal{E}}_\alpha ) = f^* P_\alpha$ and
$c_2(\widetilde{\mathcal{E}}_\alpha ) = c_2( \mathcal{E}_\alpha )$. 
Since $N_\alpha$ is supported on $G + F^{''}$, we set 
\begin{equation}\label{equation:negativepart}
  N_\alpha = \sum_{j=s^{'}+1}^{s+r} b_j \overline{E}_j\ ,
\end{equation}
and we are in position to formulate the following:

\begin{proposition} \label{proposition:firstest}
  \begin{align*}
    \frac{1}{d}& \left( 3c_2(\widetilde{\mathcal{E}}_\alpha) - 
    c^2_1(\widetilde{\mathcal{E}}_\alpha) \right)=
    \frac{1}{d} \left( 3c_2(\mathcal{E}_\alpha)-
    c^2_1(\mathcal{E}_\alpha) + f^*N^2_\alpha \right)= \\
    & 3 e_{X\setminus D} - (K_X+D)^2 - 2 \alpha \left[ (K_X+D) \cdot C +
    \frac{3}{2}e_{C\setminus D} \right] - \alpha^2 C^2 \\
    & +\sum_{i=1}^{s^\prime}\left( 3(\alpha m_i-1)+(x_i-\alpha m_i)^2\right)+
    \sum_{i=s^\prime+1}^{s}\left( (x_i-\alpha m_i)^2 -b_i^2 \right) \\
    & +\sum_{j=s+1}^{s+r^\prime}\left( -3\alpha m_j \epsilon_j + (x_i-\alpha
    m_i)^2 -b_j^2 \right) + \sum_{j=s+r^\prime + 1}^{s+r}\left( (x_i-\alpha
    m_i)^2 -b_j^2 \right)
  \end{align*}
\end{proposition}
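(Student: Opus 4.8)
The plan is to prove the two equalities in turn: the first rests on the formal properties of the nef reduction together with the orthogonality of the Zariski decomposition, while the second is a bookkeeping refinement of the already established identity \eqref{equation:proofattempt}.

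For the first equality I would start from the two facts recorded just before the statement, namely $c_1(\widetilde{\mathcal{E}}_\alpha)=f^*P_\alpha$ and $c_2(\widetilde{\mathcal{E}}_\alpha)=c_2(\mathcal{E}_\alpha)$, together with $c_1(\mathcal{E}_\alpha)=f^*D_\alpha$. Since $f$ is finite of degree $d$, pullback multiplies self-intersection numbers by $d$, so that $c_1^2(\widetilde{\mathcal{E}}_\alpha)=d\,P_\alpha^2$ and $c_1^2(\mathcal{E}_\alpha)=d\,D_\alpha^2$. The key input is property \emph{(iv)} of Proposition \ref{proposition:zariskidecomposition}, which gives $P_\alpha\cdot N_\alpha=0$ and hence $D_\alpha^2=P_\alpha^2+N_\alpha^2$. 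Substituting yields $c_1^2(\widetilde{\mathcal{E}}_\alpha)=c_1^2(\mathcal{E}_\alpha)-(f^*N_\alpha)^2$, and since the two $c_2$'s agree this is precisely the first equality, once $f^*N^2_\alpha$ is read as $(f^*N_\alpha)^2=d\,N_\alpha^2$.

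For the second equality I would add $\tfrac{1}{d}(f^*N_\alpha)^2=N_\alpha^2$ to the right-hand side of \eqref{equation:proofattempt}, which was obtained from Corollary \ref{corollary:chernresoldata}. It then remains only to evaluate $N_\alpha^2$. Writing $N_\alpha=\sum_{j=s'+1}^{s+r}b_j\overline{E}_j$ as in \eqref{equation:negativepart}, I would invoke the standard fact that the total transforms of the exceptional curves of a sequence of point blow-ups are orthonormal up to sign, that is $\overline{E}_i\cdot\overline{E}_j=-\delta_{ij}$; this immediately gives $N_\alpha^2=-\sum_{j=s'+1}^{s+r}b_j^2$. Distributing the summands $-b_j^2$ over the three index ranges $s'+1\le i\le s$, $s+1\le j\le s+r'$ and $s+r'+1\le j\le s+r$ appearing in \eqref{equation:proofattempt} produces the stated expression.

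The only genuinely delicate point is the legitimacy of expanding $N_\alpha$ in the total-transform basis with indices running from $s'+1$, given that by construction $N_\alpha$ is supported on the strict transforms $G+F''$. This is justified because the strict transform of $E_i$ differs from $\overline{E}_i$ only by total transforms of curves blown up later, whose indices are strictly larger than $i$; hence for $i\ge s'+1$ the span of the relevant strict transforms coincides with the span of $\{\overline{E}_j : s'+1\le j\le s+r\}$, so the orthonormality computation applies verbatim. Everything else is routine algebra, and I do not anticipate a serious obstacle beyond keeping the index ranges consistent.
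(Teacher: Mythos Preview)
Your proposal is correct and follows exactly the route the paper intends: the paper's own proof reads simply ``In view of \eqref{equation:negativepart} and Corollary \ref{corollary:chernresoldata}, the Proposition follows by a straightforward computation,'' and what you have written is precisely that straightforward computation made explicit. Your justification that the strict transforms $F''+G$ span the same subspace as $\{\overline{E}_j : s'+1\le j\le s+r\}$, so that \eqref{equation:negativepart} and the orthonormality $\overline{E}_i\cdot\overline{E}_j=-\delta_{ij}$ give $N_\alpha^2=-\sum_{j=s'+1}^{s+r}b_j^2$, is the only nonobvious point and you have handled it correctly.
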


\begin{proof}
  In view of \eqref{equation:negativepart} and Lemma
  \ref{lemma:chernresoldata}, the Proposition follows by a
  straightforward computation.
\end{proof}

We are going to find an upper bound for each of the last three summations 
contained in the right
hand of the equality in Proposition \ref{proposition:firstest}. 
First of all we have:

\begin{lemma} \label{lemma:bj}
  \begin{equation*}
    b_j \ge x_j - \alpha m_j \quad \text{for $ s^\prime + 1 \le j \le s+r$}
    \ ,
  \end{equation*}
  and then
  \begin{equation} \label{equation:bj}
    b_j^2\ge \max\{x_j-\alpha m_j, 0\}^2 \quad \text{for $ s^\prime + 1 \le j 
    \le s+r$}.
  \end{equation}
\end{lemma}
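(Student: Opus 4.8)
The plan is to deduce the squared estimate \eqref{equation:bj} from the single linear bound $b_j \ge x_j - \alpha m_j$ for $s^\prime + 1 \le j \le s+r$. This reduction is purely formal: if $x_j - \alpha m_j > 0$ then the linear bound forces $b_j \ge x_j - \alpha m_j > 0$, and squaring two positive quantities preserves the inequality, so $b_j^2 \ge (x_j - \alpha m_j)^2 = \max\{x_j - \alpha m_j, 0\}^2$; while if $x_j - \alpha m_j \le 0$ the right-hand side of \eqref{equation:bj} is $0$ and there is nothing to prove. Hence the whole content lies in the linear inequality, and no separate sign information on $b_j$ is needed.

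To prove the linear bound I would work in the total transform basis on $\widetilde Y$, in which the intersection form is diagonal. Recall the standard relations for the exceptional curves of the composite blow-up $\mu \circ \pi \colon \widetilde Y \to X$, namely
\[
  \overline{E}_i \cdot \overline{E}_k = -\delta_{ik}, \qquad (\pi^*\mu^* L)\cdot \overline{E}_k = 0
\]
for every divisor $L$ on $X$. Starting from the last expression for $D_\alpha$ in \eqref{equation:dalpha} and from \eqref{equation:negativepart}, these relations make the computation of $P_\alpha \cdot \overline{E}_j = (D_\alpha - N_\alpha)\cdot \overline{E}_j$ immediate, giving
\[
  P_\alpha \cdot \overline{E}_j = b_j - (x_j - \alpha m_j), \qquad s^\prime + 1 \le j \le s+r.
\]
Thus the linear bound is equivalent to the positivity statement $P_\alpha \cdot \overline{E}_j \ge 0$ for every index $j$ in the range of the cycle $G + F^{\prime\prime}$.

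The positivity I would extract from the fact that $P_\alpha = P_{G+F^{\prime\prime}}(D_\alpha)$ is nef on the cycle $G+F^{\prime\prime}$, that is $P_\alpha \cdot Z \ge 0$ for every prime component $Z$ of $G + F^{\prime\prime}$ (property \emph{(ii)} of Proposition \ref{proposition:zariskidecomposition}). The key point is that, for $j \ge s^\prime + 1$, the total transform $\overline{E}_j$ is a non-negative combination of strict transforms of exceptional curves, each of which is a component of $G + F^{\prime\prime}$: pulling $E_j$ up through the subsequent blow-ups writes $\overline{E}_j$ as its own strict transform plus a non-negative combination of the strict transforms of the exceptional curves created over points of $E_j$. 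Intersecting this expansion with $P_\alpha$ and using nefness on the cycle term by term then yields $P_\alpha \cdot \overline{E}_j \ge 0$.

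The hard part will be the bookkeeping in this last step: one must check that for every $j \ge s^\prime + 1$ all the exceptional curves occurring in $\overline{E}_j$ really belong to $G + F^{\prime\prime}$, and in particular that none of the curves $F_1, \ldots, F_{s^\prime}$ over $S_1$, which are deliberately left out of the cycle, can occur. This is exactly where the order of the blow-ups enters: a curve $E_j$ with $s^\prime + 1 \le j \le s$ lies over a point of $S_2$, and the curves $E_{s+1}, \ldots, E_{s+r}$ are produced by the later blow-ups $\pi_i$, so every exceptional curve created over such an $E_j$ belongs to $F^{\prime\prime}$ or $G$ and never to $F^\prime$; the curves $F^\prime$ appear only as earlier members of the chains over $S_1$ and so never inside the total transform of a later $\overline{E}_j$. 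Once this is verified the inequality $b_j \ge x_j - \alpha m_j$, and hence \eqref{equation:bj}, follows.
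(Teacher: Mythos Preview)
Your proof is correct and follows the same approach as the paper: compute $P_\alpha \cdot \overline{E}_j = b_j - (x_j - \alpha m_j)$ in the total-transform basis, then invoke nefness of $P_\alpha$ on $G + F^{\prime\prime}$ to conclude. The paper is terser, simply asserting that for $j \ge s^\prime + 1$ the support of $\overline{E}_j$ lies in $F^{\prime\prime} + G$, whereas you spell out the bookkeeping showing that no component of $F^\prime$ can occur in such an $\overline{E}_j$; this extra care is justified and fills in what the paper leaves implicit.
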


\begin{proof}
  Let us prove the first inequality. If $j\ge s^\prime +1 $ then
  the upport of the effective divisor $\overline{E}_j$ 
  is contained in the support of
  $F^{\prime \prime}+ G$. It follows that:
  \begin{equation*}
    0 \le P_\alpha \cdot \overline{E}_j = \left( D_\alpha - N_\alpha 
    \right)\cdot \overline{E}_j =  -x_j+\alpha m_j + b_j\ ,
  \end{equation*}
  note indeed that the divisors $\overline{E}_j$, $j=1,\ldots,s+r$, 
  are
  orthogonal with respect to the intersection product.
  Regarding \eqref{equation:bj}, it is an immediate
  consequence of the first inequality given that $b^2_j\ge 0$. 
\end{proof}

From Lemma \ref{lemma:bj} we deduce the following inequalities:

\begin{corollary} \label{corollary:fst}
  \begin{gather}
    (x_i - \alpha m_i)^2 - b_i^2 \le \alpha^2 m_i(m_i-x_i)\quad \text{for
    $s^\prime + 1 \le i \le s$,}  \label{equation:firstsum} \\
    -3 \alpha m_j \epsilon_j + (x_j-\alpha m_j)^2 - b_j^2 \le \alpha^2 m_j
    (m_j - 1) \quad \text{for $s+1\le j \le s+r^\prime$,}
    \label{equation:secondsum} \\
    (x_i - \alpha m_i)^2 - b_i^2 \le \alpha^2 m_j (m_j-1+\epsilon_j +
    \delta_j)\quad \text{ for $s+r^\prime + 1 \le j \le s+ r$.}
    \label{equation:thirdsum} 
  \end{gather}
\end{corollary}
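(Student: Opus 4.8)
The plan is to treat \eqref{equation:firstsum}, \eqref{equation:secondsum} and \eqref{equation:thirdsum} uniformly, in two moves: first use Lemma \ref{lemma:bj} to eliminate the unknown $b_j$, and then finish by an elementary case analysis driven by the integrality of $x_j$, $\delta_j$, $\epsilon_j$.

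First I would record the elementary identity
\begin{equation*}
  (x_j-\alpha m_j)^2 - \max\{x_j-\alpha m_j,0\}^2 = \max\{\alpha m_j - x_j,0\}^2,
\end{equation*}
so that the squared bound \eqref{equation:bj} of Lemma \ref{lemma:bj} yields, for every $j$ in the relevant range,
\begin{equation*}
  (x_j-\alpha m_j)^2 - b_j^2 \le \max\{\alpha m_j - x_j,0\}^2.
\end{equation*}
This replaces $b_j$ by an explicit quantity and turns each of the three claims into a purely numerical inequality involving only $\alpha$, $m_j$, $x_j$, $\epsilon_j$ and $\delta_j$.

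For \eqref{equation:firstsum}, with $s^\prime+1\le i\le s$, I would split on the sign of $\alpha m_i - x_i$. If $\alpha m_i \le x_i$ the left-hand bound is $0$ while the right-hand side $\alpha^2 m_i(m_i-x_i)$ is non-negative, because the centre of the blow-up lies on $C$ so $m_i\ge 1$, and $x_i\le 1$ by \eqref{equation:xsmall}. If instead $\alpha m_i > x_i$, the claim reads $(\alpha m_i-x_i)^2\le \alpha^2 m_i(m_i-x_i)$, and the difference of the two sides factors as $x_i\bigl(\alpha m_i(2-\alpha)-x_i\bigr)$, which is $\ge 0$ since $x_i\ge 0$, $2-\alpha\ge 1$ and $\alpha m_i > x_i$.

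The key observation for \eqref{equation:secondsum} and \eqref{equation:thirdsum} is that for $j\ge s+1$ the numbers $x_j$, $\delta_j$, $\epsilon_j$ are non-negative integers (non-negativity from \eqref{equation:xi2} and the definitions of $\delta_j,\epsilon_j$) with $x_j+\delta_j+\epsilon_j=1$ by \eqref{equation:xpdpe}, so exactly one of them equals $1$ and the others vanish. This collapses each inequality to a short list of explicit cases. In \eqref{equation:secondsum}, where $\delta_j=0$ by \eqref{equation:deltaj}, only $(x_j,\epsilon_j)=(1,0)$ or $(0,1)$ occur: the first is the computation already done above, and the second reduces to $\alpha^2 m_j\le 3\alpha m_j$, true because $\alpha\le 1<3$---this is exactly where the coefficient $3$ in front of $\alpha m_j\epsilon_j$ is used. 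In \eqref{equation:thirdsum}, the case $x_j=1$ again gives $\max\{\alpha m_j-1,0\}^2\le \alpha^2 m_j(m_j-1)$, while each of the cases $\epsilon_j=1$ and $\delta_j=1$ makes both sides equal to $\alpha^2 m_j^2$, so that equality holds. The main point is thus not any single estimate but the integrality reduction $x_j+\delta_j+\epsilon_j=1$: once it is noted that a single unit is distributed among non-negative integers, every remaining inequality is either the factorisation $x_i\bigl(\alpha m_i(2-\alpha)-x_i\bigr)\ge 0$ or the trivial bound $\alpha\le 1<3$, with no genuine obstacle beyond the bookkeeping of the cases.
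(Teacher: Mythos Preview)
Your proof is correct and follows essentially the same approach as the paper: both reduce to Lemma~\ref{lemma:bj}, split on the sign of $x_j-\alpha m_j$, and exploit the relation $x_j+\delta_j+\epsilon_j=1$ from Lemma~\ref{lemma:xed}. The one organizational difference is that you observe the $x_j,\delta_j,\epsilon_j$ are non-negative \emph{integers} summing to~$1$, so exactly one of them equals~$1$; this collapses \eqref{equation:secondsum} and \eqref{equation:thirdsum} into a handful of explicit substitutions, whereas the paper carries $\epsilon_j,\delta_j$ as parameters through a chain of inequalities. Your factorisation $x_i\bigl(\alpha m_i(2-\alpha)-x_i\bigr)\ge 0$ for \eqref{equation:firstsum} is likewise a cleaner packaging of the same estimate the paper obtains step by step. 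Neither approach needs anything the other does not; yours is simply a tidier bookkeeping of the same case analysis.
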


\begin{proof}
  Let us start proving \eqref{equation:firstsum}.
  If $x_i - \alpha m_i\le 0$ then $ x_i \le \alpha m_i$ and by
  \eqref{equation:bj} we have
  \begin{gather*}
    (x_i - \alpha m_i)^2 - b_i^2 \le x_i^2 -2\alpha m_ix_i + \alpha^2
    m_i^2\le\\
    \alpha m_i x_i -2 \alpha m_i x_i + \alpha^2 m_i^2 = \alpha^2
    m_i (m_i - x_i)\ .
  \end{gather*}
  If $ x_i - \alpha m_i > 0 $ then by \eqref{equation:bj}, $ -b^2_i \le
  - (x_i - \alpha m_i)^2 $ and \eqref{equation:firstsum} becomes $0\le
  \alpha^2 m_i(m_i-x_i)$, which is true since $x_i\le 1$ for $s^\prime + 1 \le
  i \le s$ by \eqref{equation:xsmall} of Lemma \ref{lemma:xed}.
  Let us consider \eqref{equation:secondsum}. 
  First of all, observe that, for $s+1 \le j \le s+r^\prime$, by 
  \eqref{equation:xpdpe} of Lemma \ref{lemma:xed}
  we have that $x_j+\delta_j+\epsilon_j = 1$, moreover $\delta_j=0$, see
  \eqref{equation:deltaj}, since 
  blow-ups occur in points that do not map onto  
  $D$, it follows that $x_j=1-\epsilon_j$. 
  If $x_i-\alpha m_i \le 0$, again by Lemma \ref{lemma:bj} we have that:
  \begin{gather*}
    -3\alpha m_j \epsilon_j + (x_j-\alpha m_j)^2 - b_j^2 \le -3\alpha m_j
    \epsilon_j + (x_j-\alpha m_j)^2 = \\
    -3\alpha m_j \epsilon_j + (1 -
      \epsilon_j-\alpha
    m_j)^2 =  -3\epsilon_j\alpha m_j  + (1-\epsilon_j)^2 -2
    (1-\epsilon_j)\alpha m_j + \alpha^2 m_j^2 = \\
    \alpha^2 m_j^2 - (2+\epsilon_j)\alpha m_j + (1-\epsilon_j)^2\le 
    \alpha^2 m_j^2 - (2+\epsilon_j)\alpha m_j + (1-\epsilon_j)\alpha m_j =
    \\
    \alpha^2 m_j^2 - (1+2\epsilon_j) \alpha m_j \le \alpha^2 m_j^2 -
    \alpha^2 m_j= \alpha^2 m_j(m_j-1)\ .
  \end{gather*}
  To conclude the proof of \eqref{equation:secondsum}, observe that if
  $x_j-\alpha m_j> 0$ then \eqref{equation:secondsum} reduces to
  $$-3\epsilon_j \alpha m_j \le \alpha^2 m_j(m_j-1) $$
  that is always true
  since the
  left hand of the inequality is always non positive and the right hand is
  always non negative.
  Finally let us prove \eqref{equation:thirdsum} beginning again with
  the case $x_j-\alpha m_j \le 0$. By \eqref{equation:bj} of
  Lemma \ref{lemma:bj} and Equation \eqref{equation:xpdpe} of Lemma
  \ref{lemma:xed} we have:
  \begin{gather*}
    (x_j - \alpha m_j)^2 -b_j^2 = (x_j - \alpha m_j)^2 = x_j^2 -2 x_j\alpha
    m_j + \alpha^2 m_j^2\le \\
    2\alpha^2 m_j^2 -2 x_j \alpha m_j \le \alpha^2 m_j^2 -
    x_j \alpha m_j \le \alpha^2 m_j^2 - \alpha^2 m_j x_j = \\
    \alpha^2 m_j(m_j - x_j) = \alpha^2 m_j ( m_j - 1 + \epsilon_j + \delta_j
    ) = \alpha^2 m_j(m_j - 1 )+ \alpha^2 m_j (\epsilon_j + \delta_j)\ .
  \end{gather*}
  In case $x_j - \alpha m_j > 0$, as before,
  \eqref{equation:thirdsum} reduces to $$0\le \alpha^2 m_j(m_j - 1 )+
  \alpha^2 m_j (\epsilon_j + \delta_j)\ ,$$ that is trivially true. 
\end{proof}

Observe that $P_\alpha$ 
may not be nef on $F+G$. Put $\widehat{P}_\alpha = P_{F+G}( P_\alpha ) $,
$\widehat{N}_\alpha = N_{F+G}( P_\alpha ) $ and 
$\overline{P}_\alpha = P( P_\alpha )$, 
$\overline{N}_\alpha = N( P_\alpha )$. 
$P_\alpha= \widehat{P}_\alpha + \widehat{N}_\alpha$ is then the Zariski
decomposition with support in $F+G$ and $P_\alpha = \overline{P}_\alpha +
\overline{N}_\alpha$ the absolute Zariski decomposition. Let us write
$\widehat{N}_\alpha$ as a sum\footnote{Since by construction $P_\alpha$ is nef on
  $F^{''}+ G$, we could write $\widehat{N}_\alpha =
  \sum_{i=1}^{s^\prime} \hat{b}_i \overline{E}_i + \sum_{j=1}^{s}
  \hat{b}_{s^\prime+j} \overline{E}_{s^\prime+j} $, indeed the remaining $\hat{b}_i$ are certainly
  zero. Since we will need to bound $\hat{b}_i$,
  $i=1,\ldots,s^\prime$,
  this would have no impact in what follows.}:
\begin{equation*}
  \widehat{N}_\alpha = \sum_{i=1}^{s+r} \hat{b}_i \overline{E}_i\ . 
\end{equation*}

\begin{lemma} \label{lemma:}
  \begin{equation*}
    \hat{b}_i \ge x_i - \alpha m_i \quad \text{for $ 1 \le i 
    \le s^\prime$}
    \ ,
  \end{equation*}
  then
  \begin{equation*}
    \hat{b}_i^2\ge (\max\{x_i-\alpha m_i, 0\})^2 \quad \text{for $ 
      1 \le i 
    \le s^\prime $}
  \end{equation*}
  and
  \begin{equation} \label{equation:absolutezariski}
    \overline{N}_\alpha^2 \le \widehat{N}_\alpha^2 \le - \sum_{i=1}^{s^\prime}
    (\max\{x_i-\alpha m_i, 0\})^2\ .
  \end{equation}
\end{lemma}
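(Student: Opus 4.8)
The plan is to run the argument of Lemma \ref{lemma:bj} one level deeper, now for the decomposition $P_\alpha=\widehat{P}_\alpha+\widehat{N}_\alpha$ with support in $F+G$, and then to read off \eqref{equation:absolutezariski} from the comparison of relative and absolute Zariski decompositions in Corollary \ref{corollary:zdecompositionpr}. First I would prove $\hat{b}_i\ge x_i-\alpha m_i$ for $1\le i\le s^\prime$. For such an $i$ the total transform $\overline{E}_i$ is supported on $F+G$: its components are $F_i$ together with the strict transforms of those curves of $G^\prime$ produced by the blow-ups $\pi_1,\dots,\pi_{r^\prime}$ centred over $S_1$, since the blow-ups over $S_2$ are centred away from the strict transform of $E_i$ and hence never contribute to $\overline{E}_i$. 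As $\widehat{P}_\alpha=P_{F+G}(P_\alpha)$ is nef on every component of $F+G$ and $\overline{E}_i$ is a non-negative combination of such components, we get $\widehat{P}_\alpha\cdot\overline{E}_i\ge 0$.

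Next I would evaluate this intersection number on $\widetilde{Y}$ using $\overline{E}_i\cdot\overline{E}_j=-\delta_{ij}$ and $\pi^*\mu^*(-)\cdot\overline{E}_i=0$. From the last line of \eqref{equation:dalpha} one has $D_\alpha\cdot\overline{E}_i=-(x_i-\alpha m_i)$, while $N_\alpha\cdot\overline{E}_i=0$ because $N_\alpha$ is supported on the $\overline{E}_j$ with $j\ge s^\prime+1$ by \eqref{equation:negativepart}; hence $P_\alpha\cdot\overline{E}_i=\alpha m_i-x_i$. Subtracting $\widehat{N}_\alpha\cdot\overline{E}_i=-\hat{b}_i$ gives $\widehat{P}_\alpha\cdot\overline{E}_i=\alpha m_i-x_i+\hat{b}_i\ge 0$, which is the first claim. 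Since $\widehat{N}_\alpha$ is effective we have $\hat{b}_i\ge 0$, so in fact $\hat{b}_i\ge\max\{x_i-\alpha m_i,0\}$ and therefore $\hat{b}_i^2\ge(\max\{x_i-\alpha m_i,0\})^2$.

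Finally I would split \eqref{equation:absolutezariski} into its two inequalities. The left one, $\overline{N}_\alpha^2\le\widehat{N}_\alpha^2$, is Corollary \ref{corollary:zdecompositionpr} (iii) applied to the effective divisor $P_\alpha$ and the negative definite cycle $F+G$, which yields $0\ge(N_{F+G}(P_\alpha))^2\ge(N(P_\alpha))^2$. For the right one I would use that $\widehat{N}_\alpha=\sum_{i=1}^{s+r}\hat{b}_i\overline{E}_i$ is expressed in the orthonormal basis of total transforms, so $\widehat{N}_\alpha^2=-\sum_{i=1}^{s+r}\hat{b}_i^2$; dropping the non-negative terms with $i>s^\prime$ and bounding the remaining ones by the previous step gives $\widehat{N}_\alpha^2\le-\sum_{i=1}^{s^\prime}\hat{b}_i^2\le-\sum_{i=1}^{s^\prime}(\max\{x_i-\alpha m_i,0\})^2$. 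The only delicate point, already implicit in Lemma \ref{lemma:bj}, is the passage from nefness of $\widehat{P}_\alpha$ on the strict-transform components of $F+G$ to positivity against the total transforms $\overline{E}_i$; once the support statement $\mathrm{Supp}(\overline{E}_i)\subseteq F+G$ is secured for $i\le s^\prime$ the rest is routine bookkeeping with $\overline{E}_i\cdot\overline{E}_j=-\delta_{ij}$.
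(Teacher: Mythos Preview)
Your proof is correct and follows essentially the same route as the paper's: nefness of $\widehat{P}_\alpha$ on $F+G$ applied to the total transforms $\overline{E}_i$ (whose support you correctly verify lies in $F+G$ for $i\le s^\prime$), the orthogonality $\overline{E}_i\cdot\overline{E}_j=-\delta_{ij}$, and Corollary \ref{corollary:zdecompositionpr}(iii) for the comparison $\overline{N}_\alpha^2\le\widehat{N}_\alpha^2$. If anything you are slightly more careful than the paper, which jumps directly to $P_\alpha\cdot\overline{E}_i=-x_i+\alpha m_i$ without explicitly noting that $N_\alpha\cdot\overline{E}_i=0$ for $i\le s^\prime$.
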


\begin{proof}
  We follow the same argument as in the proof of Lemma \ref{lemma:bj}. 
  We have that
  \begin{equation*}
    0 \le \widehat{P}_\alpha \cdot \overline{E}_i = (P_\alpha -
    \widehat{P}_\alpha ) \cdot \overline{E}_i = 
    - x_i +\alpha m_i + \hat{b}_\alpha\ , 
  \end{equation*}
  for $i=1,\ldots,s+r$, indeed $\widehat{P}_\alpha$ is nef on $F+G$ and the
  support of the effective divisor 
  $\overline{E}_i$, for $i=1,\ldots,s+r$, is contained in
  $F+G$.
  It follows that $\hat{b}_i \ge x_i - \alpha m_i $ and
  moreover, since
  $\hat{b}^2_i$ is non negative, $ \hat{b}_i^2\ge (\max\{x_i-\alpha
  m_i, 0\})^2 $.
  Finally, observe that by Proposition
  \ref{proposition:zariskidecomposition} we have that $\overline{N}_\alpha^2 \le
  \widehat{N}_\alpha^2$ and then 
  \begin{equation*}
    \overline{N}_\alpha^2 \le \widehat{N}_\alpha^2 = - \sum_{i=1}^{s+r}
    \hat{b}_i^2 \le  - \sum_{i=1}^{s^\prime} \hat{b}_i^2 \le 
    - \sum_{i=1}^{s^\prime} (\max\{x_i-\alpha
    m_i, 0\})^2\ ,
  \end{equation*}
  that proves \eqref{equation:absolutezariski} and conclude the proof 
  of the Lemma. 
\end{proof}

We can summarize the above computations in the following:

\begin{proposition} \label{proposition:principalinequality}
  \begin{equation}\label{equation:principalinequality}
    \begin{split}
      \frac{1}{d} & \left(3c_2(\widetilde{\mathcal{E}}_\alpha) - 
	c^2_1(\widetilde{\mathcal{E}}_\alpha) +
      \frac{(f^*\overline{N}_\alpha)^2}{4} \right) \le
      3 e_{X\setminus D} - (K_X+D)^2 \\ 
      &-2 \alpha \left[ (K_X+D) \cdot C +
      \frac{3}{2} e_{ C\setminus D} \right] + 
      \frac{\alpha^2}{2} \left[ C^2 + 3(K_X+D) \cdot C + 3 e_{ C \setminus D } 
      \right]\ .
    \end{split}
  \end{equation}
\end{proposition}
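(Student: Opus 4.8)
The plan is to begin from the identity of Proposition \ref{proposition:firstest} and add the term $\tfrac{1}{4d}(f^*\overline{N}_\alpha)^2$ to both sides. Since $f$ has degree $d$, the projection formula gives $(f^*\overline{N}_\alpha)^2 = d\,\overline{N}_\alpha^2$, so the added quantity is simply $\tfrac14\overline{N}_\alpha^2$. The four ``main'' terms $3e_{X\setminus D}-(K_X+D)^2-2\alpha\big[(K_X+D)\cdot C+\tfrac32 e_{C\setminus D}\big]$ are common to the left side so produced and to the right side of \eqref{equation:principalinequality}; after cancelling them and transferring the leftover $-\alpha^2C^2$, the target reduces to showing that the four summations of Proposition \ref{proposition:firstest} together with $\tfrac14\overline{N}_\alpha^2$ are bounded by $\tfrac{3\alpha^2}{2}\big[C^2+(K_X+D)\cdot C+e_{C\setminus D}\big]$. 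The organising principle is that, by Proposition \ref{proposition:adjunction}, this last quantity equals $\tfrac{3\alpha^2}{2}$ times $\sum_{i=1}^{s}m_i(m_i-x_i+1)+\sum_{j=s+1}^{s+r}m_j(m_j-1)+\sum_{j=s+r'+1}^{s+r}m_j(\epsilon_j+\delta_j)$, so it suffices to dominate each piece of the left side by $\tfrac32$ times its matching adjunction summand.

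For the three summations indexed by $s'+1\le i\le s+r$ I would apply Corollary \ref{corollary:fst} directly: the bounds \eqref{equation:firstsum}, \eqref{equation:secondsum}, \eqref{equation:thirdsum} control them termwise by $\alpha^2 m_i(m_i-x_i)$, $\alpha^2 m_j(m_j-1)$ and $\alpha^2 m_j(m_j-1+\epsilon_j+\delta_j)$. Each of these is at most $\tfrac32$ times the corresponding adjunction term, because $\alpha^2 T\le\tfrac{3\alpha^2}{2}T$ whenever $T\ge0$, and the relevant quantities are nonnegative: for $s'+1\le i\le s$ one has $x_i\le1$ by \eqref{equation:xsmall}, so $m_i-x_i+1\ge0$, while $m_j(m_j-1)$ and $m_j(\epsilon_j+\delta_j)$ are manifestly nonnegative for integral $m_j\ge0$ and $\epsilon_j,\delta_j\ge0$.

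The delicate step, and the main obstacle, is the first summation $\sum_{i=1}^{s'}\big(3(\alpha m_i-1)+(x_i-\alpha m_i)^2\big)$ combined with $\tfrac14\overline{N}_\alpha^2$; this is exactly where the coefficient $\tfrac14$ of Theorem \ref{theorem:bmy} is consumed rather than wasted. Here I would substitute $x_i=2$ (valid for $1\le i\le s'$ by \eqref{equation:xsmall}) and insert the bound $\overline{N}_\alpha^2\le-\sum_{i=1}^{s'}(\max\{x_i-\alpha m_i,0\})^2$ from \eqref{equation:absolutezariski}, then split termwise on the sign of $2-\alpha m_i$. When $\alpha m_i\le2$ the quadratic contributions telescope exactly, $3(\alpha m_i-1)+(2-\alpha m_i)^2-\tfrac14(2-\alpha m_i)^2=\tfrac34\alpha^2 m_i^2$, and this is $\le\tfrac{3\alpha^2}{2}m_i(m_i-1)$ precisely because the points of $S_1$ are singular points of $C$, forcing $m_i\ge2$. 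When $\alpha m_i>2$ the maximum vanishes and the term equals $\alpha^2 m_i^2-\alpha m_i+1$, which one checks is again $\le\tfrac{3\alpha^2}{2}m_i(m_i-1)$ using $\alpha\le1$ and $\alpha m_i>2$. In both cases each summand is dominated by $\tfrac{3\alpha^2}{2}m_i(m_i-1)$, which is the adjunction term for $i\le s'$ since $x_i=2$ gives $m_i(m_i-x_i+1)=m_i(m_i-1)$.

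Finally I would sum the four bounds, identify the total as $\tfrac{3\alpha^2}{2}$ times the full adjunction sum, replace that sum by $C^2+(K_X+D)\cdot C+e_{C\setminus D}$ via Proposition \ref{proposition:adjunction}, and recombine the $C^2$ contributions (the transferred $-\alpha^2C^2$ merging with $\tfrac{3\alpha^2}{2}C^2$ to yield $\tfrac{\alpha^2}{2}C^2$) so as to recover the coefficient $\tfrac{\alpha^2}{2}$ in front of $\big[C^2+3(K_X+D)\cdot C+3e_{C\setminus D}\big]$. This reproduces \eqref{equation:principalinequality} exactly.
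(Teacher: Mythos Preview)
Your proposal is correct and follows essentially the same route as the paper: both start from Proposition~\ref{proposition:firstest}, control the three later summations termwise via Corollary~\ref{corollary:fst}, handle the $1\le i\le s'$ summation together with the $\tfrac14\overline{N}_\alpha^2$ contribution by substituting $x_i=2$, invoking \eqref{equation:absolutezariski}, and splitting on the sign of $2-\alpha m_i$, and then close with Proposition~\ref{proposition:adjunction}. Your computation $3(\alpha m_i-1)+\tfrac34(2-\alpha m_i)^2=\tfrac34\alpha^2 m_i^2$ in the case $\alpha m_i\le 2$ is exactly the paper's claim \eqref{equation:boundoldpoints} divided by $4$, and your verification in the case $\alpha m_i>2$ is a streamlined version of the paper's chain of inequalities.
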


\begin{proof}
  Combining Proposition \ref{proposition:firstest}, Corollary
  \ref{corollary:fst} and \eqref{equation:xsmall} we get:
  \begin{equation} \label{equation:finalstepone}
    \begin{split}
      \frac{1}{d} & \left(3c_2(\widetilde{\mathcal{E}}_\alpha) - 
	c^2_1(\widetilde{\mathcal{E}}_\alpha) +
      \frac{(f^*\overline{N}_\alpha)^2}{4} \right) \le \\
      & 3e_{X\setminus D} - (K_X+D)^2 - 2 \alpha \left[ (K_X+D) \cdot C +
      \frac{3}{2} e_{C\setminus D} \right] - 
      \alpha^2 C^2 \\
      & +\sum_{i=1}^{s^\prime}\left( 3(\alpha m_i-1)+(2 -\alpha m_i)^2 - 
	\frac{1}{4}(\max\{2 -\alpha
      m_i, 0\})^2 \right)+ \\
      & \sum_{i=s^\prime+1}^{s} \alpha^2 m_i(m_i-x_i)
      +\sum_{j=s+1}^{s+r^\prime} \alpha^2 m_j (m_j - 1) + 
      \sum_{j=s+r^\prime + 1}^{s+r} \alpha^2 m_j (m_j-1+\epsilon_j +
      \delta_j )\ .
    \end{split}
  \end{equation}

  We are going to bound the terms $3(\alpha m_i-1)+(2 -\alpha m_i)^2 -
  \frac{1}{4}(\max\{2 - \alpha m_i, 0\})^2 $ in the above sum. First of all
  observe that
  \begin{equation*}
    3(\alpha m_i-1)+(2 -\alpha m_i)^2 = 1-\alpha m_i+\alpha^2 m_i^2\, 
  \end{equation*}
  following Miyaoka, we claim that 
  \begin{equation} \label{equation:boundoldpoints}
    \begin{split}
      4 & \left[ 3(\alpha m_i-1)+(2 -\alpha m_i)^2 \right]-
      (\max\{2 - \alpha m_i, 0\})^2 = \\
      & 4 ( 1-\alpha m_i+\alpha^2 m_i^2 ) - (\max\{2 - \alpha m_i, 0\})^2 \le
      6\alpha^2 m_i(m_i-1)\ ,
    \end{split}
  \end{equation}
  for $i=1,\ldots,s^\prime$. If $2>\alpha m_i$, \eqref{equation:boundoldpoints}
  reduces to $3\alpha^2\le 6\alpha^2m_i(m_i-1)$ that it is true thanks to
  the fact that $m_i\ge 2$ for $i=1,\ldots,s^\prime$. Consider now the case
  $2\le \alpha m_i$, then
  \begin{equation*}
    \begin{split}
      4 & ( 1-\alpha m_i+\alpha^2 m_i^2 ) - 6\alpha^2 m_i(m_i-1)\le 
      2\alpha m_i - 4\alpha m_i + 4 \alpha^2 m_i^2 -6 \alpha^2 m_i(m_i-1)=
      \\
      & -2 \alpha m_i + 4 \alpha^2 m_i^2 - 6 \alpha^2 m_i (m_i-1)= 
      2\alpha m_i ( -1 + 2\alpha m_i -3\alpha m_i + 3\alpha ) = \\
      & 2\alpha m_i ( 3 \alpha -1 -\alpha m_i ) \le 2\alpha m_i(3-1-\alpha
      m_i)=  
      2 \alpha m_i (2 -\alpha m_i )\le 0\ ,
    \end{split}
  \end{equation*}
  and this completes the proof of \eqref{equation:boundoldpoints}. In view
  of \eqref{equation:boundoldpoints}, the inequality 
  \eqref{equation:finalstepone} becomes 
  \begin{equation*}
    \begin{split}
      \frac{1}{d} & \left(3c_2(\widetilde{\mathcal{E}}_\alpha) - 
	c^2_1(\widetilde{\mathcal{E}}_\alpha) +
      \frac{(f^*\overline{N}_\alpha)^2}{4} \right) \le \\
      & 3e_{X\setminus D} - (K_X+D)^2 - 2 \alpha \left[ (K_X+D) \cdot C +
      \frac{3}{2}e_{C\setminus D} \right] - \alpha^2 C^2 + \\
      & +\sum_{i=1}^{s^\prime} \frac{3}{2}\alpha^2 m_i(m_i-1)
      +\sum_{i=s^\prime+1}^{s} \alpha^2 m_i(m_i-x_i) \\
      &+\sum_{j=s+1}^{s+r} \alpha^2 m_j (m_j - 1) + 
      \sum_{j=s+r^\prime + 1}^{s+r} \alpha^2 m_j (\epsilon_j +
      \delta_j )\le  \\
      & 3 e_{X\setminus D} - (K_X+D)^2 - 2 \alpha \left[ (K_X+D) \cdot C +
      \frac{3}{2} e_{ C\setminus D} \right] - \alpha^2 C^2 + \\
      & + \frac{3}{2}\alpha^2 \left[ \sum_{i=1}^{s} m_i(m_i-x_i+1) + 
	\sum_{j=s+1}^{s+r} m_j (m_j - 1) 
	+ \sum_{j=s+r^\prime + 1}^{s+r} m_j (\epsilon_j + \delta_j) 
      \right]\ ,
    \end{split}
  \end{equation*}
  combining the above inequality and Proposition
  \ref{proposition:adjunction} we obtain
  \eqref{equation:principalinequality} and this concludes the proof of the
  Proposition.
\end{proof}

We are now ready to prove Theorem \ref{theorem:main}. 

\begin{proof}[Proof of Theorem \ref{theorem:main}]
  With the notation introduced above, 
  recall that by construction we have that $\mathrm{Supp}\ f^{*}(\overline{C+D})$ is a
  simple normal crossing effective divisor, 
  \begin{equation*}
    \widetilde{\mathcal{E}}_\alpha\subseteq \mathcal{E}_\alpha \subseteq
    f^*\Omega_Z \left(\log 
    \overline{C+D} \right) \subseteq \Omega_{Z} \left(\log
    (\mathrm{Supp}\ f^{*}(\overline{C+D})) \right)\ ,
  \end{equation*}
  moreover $c_1( \widetilde{\mathcal{E}}_\alpha) = P_\alpha $ is
  $\mathbb{Q}$-effective. 
  We can then apply Bogomolov-Miyaoka-Yau inequality, in the form of
  Theorem
  \ref{theorem:bmy}, to
  $\widetilde{\mathcal{E}}_\alpha$ obtaining 
  \begin{equation*}
    \left( 3c_2(\widetilde{\mathcal{E}}_\alpha) - 
      c^2_1(\widetilde{\mathcal{E}}_\alpha) +
    \frac{(f^*\overline{N}_\alpha)^2}{4}\right) \ge 0\ .
  \end{equation*}
  In view of the above inequality Theorem \ref{item:maininequality} follows
  directly by Proposition \ref{proposition:principalinequality}.

  Let us prove Theorem \ref{item:mainsecondinequality}. 
  Consider the inequality
  \eqref{equation:mainsecondary}, its left-hand side is 
  a quadratic polynomial in $\alpha$, 
  since $C$ is not a smooth $D$-rational curve
  and $(K_X + D)\cdot C \ge - \frac{3}{2} e_{X\setminus D}$ the leading
  coefficient is non positive. Indeed since $C$ is not a smooth $D$-rational
  curve we have that 
  \begin{equation*}
    C^2 + ( K_X+D ) \cdot C = 2g( C ) - 2 + \Sigma + D\cdot C \ge 0
  \end{equation*}
  where $\Sigma\ge 0$ is the contribution of the singular points of $C$ and
  $g\left( C\right)$ denotes its geometric genus as usual. 
  In view of $(K_X + D)\cdot C \ge - \frac{3}{2} e_{X\setminus D}$ 
  we have then
  \begin{equation} \label{equation:positiveleadingco}
    C^2 +  3 ( K_X + C ) \cdot C + 3 
    e_{C\setminus D} > 2  
    ( K_X + D )\cdot C + 3 e_{C \setminus D} 
    = 2 
    ( K_X + D )\cdot C + \dfrac{3}{2} e_{C\setminus D} > 0 \ .
  \end{equation}
  It follows that the left-hand side of the inequality
  \eqref{equation:maininequality} attains its minimum in correspondence of 
  \begin{equation*}
    \alpha_0 = \dfrac{2 \left( ( K_X + D )\cdot C +
    \dfrac{3}{2} e_{ C \setminus D} \right) }{C^2 +  3 
      ( K_X + C )\cdot C + 3 
    e_{ C\setminus D} }
  \end{equation*}
  and since $0\le \alpha_0 \le 1$, see \eqref{equation:positiveleadingco},
  substituting $\alpha_0$ in \eqref{equation:maininequality} we get 
  \begin{equation*}
    - \dfrac{2 \left( ( K_X + D ) \cdot C +
    \dfrac{3}{2} e_{C \setminus D} \right)^2 }{C^2 +  3 
    ( K_X + C ) \cdot C + 3 e_{ C\setminus D }} + 3
    e_{X \setminus D} - \left( K_X + D \right)^2\ge 0
  \end{equation*}
  from which \eqref{equation:mainsecondary} and then Theorem
  \ref{item:mainsecondinequality} follows
  immediately.
\end{proof}

\section{Proof of Theorem \ref{theorem:bound} and Corollary 
\ref{corollary:p1example}} \label{section:final}

\begin{notation}\label{notation:xsigmagamma}
  Suppose that $K_X + D$ is a nef and big divisor or equivalently that
  $\left( K_X + D \right)$ is nef and $( K_X + D )^2 > 0$. We
  introduce the following notation:
  \begin{gather*}
    x = \dfrac{( K_X + D ) \cdot C }{( K_X + D )^2} \\
    \sigma = \dfrac{ e_{X\setminus D} }{( K_X + D
    )^2 } \\
    \gamma = \dfrac{ -\frac{1}{2} e_{C\setminus D} }{\left( K_X + D
    \right)^2} \\
    y^2 = - \dfrac{\left( C - x ( K_X + D )
    \right)^2}{( K_X + D )^2 } = - \dfrac{C^2}{( K_X + D
    )^2} + x^2 \ .
  \end{gather*}
\end{notation}

\begin{remark} \label{remark:xsigmagamma}
  \begin{enumerate}
    \item $x\ge 0$, since we suppose that $ K_X + D $ is nef. 
    \item $\sigma \ge \tfrac{1}{3}$. Indeed, since we suppose $K_X +D$ 
      big, it is then linearly equivalent to an effective $
      \mathbb{Q}$-divisor and the inequality follows applying
      for instance \cite[Corollary 1.2]{miyaoka-maximal}.
    \item $y^2\ge 0$. Since $(K_X + D)^2>0$ the inequality
      follows applying Hodge Index Theorem to $ K_X + D
       $ and $E= \left[( K_X + D) \cdot C \right] ( K_X
      + D) - ( K_X + D )^2 C $. 
    \item $\gamma \ge - 1$, by definition of $e_{C\setminus D}$.
  \end{enumerate}
\end{remark}

The following Proposition is a consequence of Theorem \ref{theorem:main}.

\begin{proposition} \label{proposition:xsigmagamma}
  Let $X$ be a smooth projective surface, $D$ a simple normal
  crossing divisor on $X$ and $C$ an irreducible curve on $X$. Suppose that $K_X+D$
  is a nef and big divisor, $C$ is not a smooth $D$-rational curve
  and moreover that
  $(K_X + D)^2 > e_{X\setminus D}$. With Notation
  \ref{notation:xsigmagamma};
  \begin{enumerate}
    \item \label{item:polinomialP} If $( K_X + C )\cdot C> - 
      \dfrac{3}{2} e_{C \setminus D}$ or equivalently $x>3\gamma$, 
      we have that:
      \begin{equation}
	\label{equation:polinomialP}
	\mathcal{P}(x) = (\sigma -1) x^2 + (4\gamma+3\sigma -1) x - 2\gamma
	(3 \gamma + 3 \sigma -1) \ge 0
      \end{equation}
      and $x\le R_+(\sigma, \gamma)$, where $R_+(\sigma, \gamma)$ denotes the
      largest root of $\mathcal{P}$:
      \begin{equation}
	\label{equation:groot}
	R_{+}(\gamma, \sigma) = \dfrac{4\gamma + (3 \sigma -1) +
	  \sqrt{8(3\sigma -1)\gamma^2+ 8 \sigma (3 \sigma-1)\gamma +
	  (3\sigma -1)^2}}{2 (1-\sigma)}\ .
      \end{equation}
    \item \label{item:xbound} In general
      \begin{equation}
	\label{equation:xbound}
	x\le \max \{ 3\gamma, R_+(\sigma, \gamma)\} = R_+(\sigma, \gamma)\ .
	\end{equation}
  \end{enumerate}
\end{proposition}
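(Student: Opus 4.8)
The plan is to read the second inequality of Theorem~\ref{theorem:main}, i.e. \eqref{equation:mainsecondary}, through the dictionary of Notation~\ref{notation:xsigmagamma} and turn it into the quadratic inequality \eqref{equation:polinomialP}. The inequality \eqref{equation:mainsecondary} itself comes from optimizing \eqref{equation:maininequality}: its left–hand side is a quadratic in $\alpha$ with vertex $\alpha_0=\frac{2(x-3\gamma)}{x^2-y^2+3x-6\gamma}$, and the hypothesis $x>3\gamma$ of part~\ref{item:polinomialP}, together with $C^2+(K_X+D)\cdot C\ge 0$ (valid since $C$ is not a smooth $D$-rational curve), makes the denominator $x^2-y^2+3x-6\gamma=(x^2-y^2+x)+2(x-3\gamma)$ strictly positive and places $\alpha_0$ in $(0,1]$; substituting $\alpha_0$ into \eqref{equation:maininequality} then yields \eqref{equation:mainsecondary}. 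Here one also checks that Theorem~\ref{item:mainsecondinequality} is genuinely applicable: $K_X+D$ nef gives $(K_X+D)\cdot C\ge 0$, while $\sigma\ge\frac13$ (Remark~\ref{remark:xsigmagamma}) forces $e_{X\setminus D}=\sigma(K_X+D)^2>0$, so $(K_X+D)\cdot C\ge 0>-\frac32 e_{X\setminus D}$.

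Next I would substitute $(K_X+D)\cdot C=x(K_X+D)^2$, $\frac32 e_{C\setminus D}=-3\gamma(K_X+D)^2$, $3e_{X\setminus D}-(K_X+D)^2=(3\sigma-1)(K_X+D)^2$ and $C^2=(x^2-y^2)(K_X+D)^2$ into \eqref{equation:mainsecondary} and divide by $((K_X+D)^2)^2>0$. A short computation identifies the resulting left–hand side $2(x-3\gamma)^2-(3\sigma-1)(x^2-y^2+3x-6\gamma)$ with $-3\mathcal{P}(x)+(3\sigma-1)y^2$, so the inequality reads $3\mathcal{P}(x)\ge(3\sigma-1)y^2$. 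Since $\sigma\ge\frac13$ and $y^2\ge 0$ by Remark~\ref{remark:xsigmagamma}, the right–hand side is non-negative and \eqref{equation:polinomialP} follows. Because $(K_X+D)^2>e_{X\setminus D}$ gives $\sigma<1$, the leading coefficient $\sigma-1$ of $\mathcal{P}$ is negative; a concave quadratic that is non-negative at the real point $x$ must have real roots $R_-\le R_+$, with $x$ lying between them, so in particular $x\le R_+$, where $R_+$ is produced by the quadratic formula, giving \eqref{equation:groot}. This settles part~\ref{item:polinomialP}.

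For part~\ref{item:xbound} I would split on the sign of $x-3\gamma$: if $x>3\gamma$, part~\ref{item:polinomialP} gives $x\le R_+$; if $x\le 3\gamma$ the estimate $x\le 3\gamma$ is immediate. Thus $x\le\max\{3\gamma,R_+\}$, and it remains to prove the asserted identity $\max\{3\gamma,R_+\}=R_+$, i.e. $3\gamma\le R_+$. Evaluating $\mathcal{P}$ at $3\gamma$ gives the clean factorization $\mathcal{P}(3\gamma)=(3\sigma-1)\,\gamma\,(3\gamma+1)$, which (using $3\sigma-1\ge 0$) is non-negative whenever $\gamma\le-\frac13$ or $\gamma\ge 0$; in those ranges $3\gamma$ lies between the roots of the concave $\mathcal{P}$, whence $3\gamma\le R_+$. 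On the remaining band $-\frac13<\gamma<0$ I would argue directly: clearing the positive denominator $2(1-\sigma)$, the inequality $3\gamma\le R_+$ is equivalent to $(1-3\sigma)(2\gamma+1)\le\sqrt{\Delta}$, where $\Delta$ is the radicand of \eqref{equation:groot} (non-negative by part~\ref{item:polinomialP}); since $1-3\sigma\le 0$ and here $2\gamma+1>0$, the left–hand side is $\le 0\le\sqrt{\Delta}$, so the inequality holds.

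The routine but delicate point is the algebraic identification of the rewritten \eqref{equation:mainsecondary} with $-3\mathcal{P}(x)+(3\sigma-1)y^2$, where a single sign error would propagate into the formula for $R_+$. The genuine obstacle, however, is the $\max$ identity $3\gamma\le R_+$ underlying part~\ref{item:xbound}: since $\mathcal{P}(3\gamma)$ is \emph{negative} precisely on the band $-\frac13<\gamma<0$, the ``between the roots'' argument breaks down there, and one is forced into the separate direct estimate above, which is the only place where the precise form \eqref{equation:groot} of $R_+$ and the bound $\gamma>-\frac12$ (a consequence of $\gamma>-\frac13$) really enter.
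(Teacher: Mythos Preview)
Your argument for part~\ref{item:polinomialP} is essentially the paper's: both substitute Notation~\ref{notation:xsigmagamma} into \eqref{equation:mainsecondary}, arrive at $\mathcal{P}(x)\ge(\sigma-\tfrac13)y^2\ge 0$ (your $3\mathcal{P}(x)\ge(3\sigma-1)y^2$ is the same inequality up to a factor of $3$), and then use the negative leading coefficient $\sigma-1$ to trap $x$ below the larger root.

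For part~\ref{item:xbound} the paper is much shorter than your proposal. It notes that since $C$ is not a smooth $D$-rational curve one has $\gamma\ge 0$, and then simply discards the non-negative terms $(3\sigma-1)+\sqrt{\Delta}$ from the numerator of $R_+$ to obtain
\[
  R_+(\sigma,\gamma)\ \ge\ \frac{4\gamma}{2(1-\sigma)}\ =\ \frac{2\gamma}{1-\sigma}\ \ge\ 3\gamma,
\]
the last step because $\sigma\ge\tfrac13$ gives $\tfrac{1}{1-\sigma}\ge\tfrac32$. Your evaluation $\mathcal{P}(3\gamma)=(3\sigma-1)\gamma(3\gamma+1)$ and the ensuing case split on the sign of $\gamma$ are correct, but they do considerably more work than the paper: once $\gamma\ge 0$ is available, the entire branch $-\tfrac13<\gamma<0$ and the direct manipulation involving $\sqrt{\Delta}$ become superfluous. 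Your route does have the side benefit of not relying on the implication ``$C$ not smooth $D$-rational $\Rightarrow e_{C\setminus D}\le 0$'', which the paper takes for granted; but as written, the paper's two-line estimate is what you should compare against.
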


\begin{proof}
  Let us start proving statement \eqref{item:polinomialP}. The
  hypotheses of Theorem \ref{theorem:main} are satisfied, consider then
  \eqref{equation:mainsecondary}, dividing by $\left[ \left( K_X + D \right)^2
  \right]^2>0$
  and substituting Notation \ref{notation:xsigmagamma} we obtain: 
  \begin{equation*}
     (\sigma -1) x^2 + (4\gamma+3\sigma -1) x - 2\gamma
     (3 \gamma + 3 \sigma -1) \ge \left( \sigma - \dfrac{1}{3} \right) y^2\
     ,
  \end{equation*}
  the left-hand side of the above inequality is $ \mathcal{P}(x)$ and the
  right-hand side is greater than or equal to zero by Remark
  \ref{remark:xsigmagamma}. Observe that since the curve $C$ 
  is not a smooth $D$-rational curve
  $\gamma \ge
  0$ and then by Remark \ref{remark:xsigmagamma} the discriminant of the
  polynomial $ \mathcal{P}$ is greater than or equal to zero:
  \begin{equation*}
    \Delta =8(3\sigma -1)\gamma^2+ 8 \sigma (3 \sigma-1)\gamma +
	  (3\sigma -1)^2 \ge 0\ ,
  \end{equation*}
  moreover since $\sigma < 1$ the leading coefficient of $
  \mathbb{P}$ is negative, it follows that $ x \le R_+(\gamma, \sigma)$,
  and \eqref{equation:groot} follows by a straightforward computation. 
  Statement \eqref{item:xbound} follows now immediately from part
  \eqref{item:polinomialP}. Indeed, since $\sigma \ge 1/3$ then
  $\tfrac{1}{1-\sigma}\ge \tfrac{3}{2}$ and 
  \begin{equation*}
    R_+(\sigma, \gamma) \ge \dfrac{4 \gamma}{2(1-\sigma)} =
    \dfrac{2 \gamma}{1 - \sigma} \ge 3 \gamma\ . 
  \end{equation*}
\end{proof}

We are now ready to prove Theorem \ref{theorem:bound}.
\begin{proof}[Proof of Theorem \ref{theorem:bound}]
  Let us start by proving Theorem \ref{item:uppercanonicalbound}. 
  With Notation \ref{notation:xsigmagamma}, by Proposition
  \ref{proposition:xsigmagamma} and taking into account that
  $\tfrac{3\sigma -1}{8 \gamma^2} - \tfrac{\sigma^2}{\gamma^2} =
  \tfrac{3\sigma -1 - 8 \sigma^2}{8 \gamma^2}\le 0$ we have that:
  \begin{gather*}
    x \le  R_+ (\sigma, \gamma) = \\
    \dfrac{4\gamma + (3 \sigma -1) + 2 \gamma \sqrt{2 (3 \sigma -1)} 
    \sqrt{ 1 + \dfrac{\sigma}{\gamma} + 
    \dfrac{3 \sigma -1 }{8 \gamma^2} } }{2 (1-\sigma)} \le \\
    \dfrac{4\gamma + (3 \sigma -1) + 2 \gamma \sqrt{2 (3 \sigma -1)} 
    \sqrt{ \left( 1 + \dfrac{\sigma}{\gamma}\right)^2 + 
    \dfrac{3 \sigma -1 }{8 \gamma^2} - 
    \dfrac{\sigma^2}{\gamma^2} } }{2 (1-\sigma)} \le  \\
    \dfrac{4\gamma + (3 \sigma -1) + 2 \gamma \sqrt{2 (3 \sigma -1)} 
    \sqrt{ \left( 1 + \dfrac{\sigma}{\gamma}\right)^2 } }
    {2 (1 -\sigma)} =
    \\
    \dfrac{2 + \sqrt{2(3\sigma -1)}}{1-\sigma}\gamma + \dfrac{(3 \sigma
    -1) + 2\sigma \sqrt{2(3 \sigma -1)} }{ 2 (1 - \sigma) } \ .
  \end{gather*}
  Substituting Notation \ref{notation:xsigmagamma} in the above inequality
  we finally obtain part \ref{item:uppercanonicalbound} of the Theorem. 

  We are going to prove Theorem \ref{item:uppercanonicalboundsmooth} now.
  Let $t = ( K_X + D ) \cdot C$, since $C$ is smooth 
  and $D$, $C$ meet transversally, $C^2 + ( K_X + D ) \cdot C = - 
  e_{C\setminus D}$, substituting the preceding inequalities in
  \eqref{equation:mainsecondary} we obtain 
  \begin{equation}
    \label{equation:polynomialsmooth}
    \begin{split}
      2 t^2 + \left[ 6 e_{C\setminus D} - 2 \left( 3 
	  e_{X\setminus D}- 
      \left( K_X + D \right)^2 \right) \right]
      t 
      - 2 e_{C\setminus D}  \left( 3 
      e_{X\setminus D}- \left( K_X + D \right)^2 \right) +
      \dfrac{9}{2} e_{C\setminus D}^2 \le 0\ .
    \end{split}
  \end{equation}
  As in the proof of the preceding Part of the Theorem, the leading
  coefficient of the above polynomial in $t$ is greater than or equal
  to zero, and its discriminant equals 
  \begin{equation*}
    \left( 3 e_{X\setminus D} - 
    \left( K_X + D \right)^2 \right) \left( -2 e_{C\setminus D} + 
    3 e_{X\setminus D} - 
    \left( K_X + D \right)^2 \right) \ .
  \end{equation*}
  If $C$ is not a smooth $D$-rational curve then $- e_{C\setminus D}\ge 0$,
  moreover by the log Bogomolov-Miyaoka-Yau inequality as in   
  Miyaoka \cite[Corollary 1.2]{miyaoka-maximal},
  the above discriminant is non negative and finally $t$ is bounded
  above by the largest root of the polynomial in
  \eqref{equation:polynomialsmooth}. A straightforward computation gives
  now \eqref{equation:canonicalsmooth}.
  
  The proof of \eqref{MichaelsEqn} follows exactly as per 
  \cite[Remark A, pg. 405]{miyaoka-orbibundle}
  but employs \cite[Theorem 0.1]{langer-orbifold}
  so as to have the Bogomolov-Miyaoka-Yau
  inequality in the generality necessary to deal with $D\cap C\neq \emptyset$,
  indeed after contracting $C$ we may obtain a quotient singularity on the
  boundary divisor. 
  Specifically the curve $C$ of \eqref{MichaelsEqn} has self intersection
  $C^2=-m$, so that on its contraction $(X_0,D_0)$ taking into account the
  contribution by the quotient singularity \footnote{See 
    \cite[p.~359]{langer-orbifold} for the
  computation of the relative orbifold Euler numbers.}, if any,
  \begin{equation}\label{Michael1}
    e_{X_0\setminus D_0} =\begin{cases}
      e_{X\setminus D} -2 + \frac{1}{m}\quad & \text{if}\,\, 
      C\cap D=\emptyset, \\
      e_{X\setminus D} -1 \quad &\text{otherwise}
    \end{cases}
  \end{equation}
  and similarly,
  \begin{equation}\label{Michael2}
    \left(K_{X_0} + D_0\right)^2 = \begin{cases}
      \left(K_X + D\right)^2 + \dfrac{(m-2)^2}{m}\quad & \text{if}\,\, 
      C\cap D=\emptyset, \\
      \left(K_X + D\right)^2 + \dfrac{(m-1)^2}{m}\quad & \text{otherwise}
    \end{cases}
  \end{equation}
  while the canonical degree is given by
  \begin{equation}\label{Michael3}
    t = (K_{X} + D) \cdot C =\begin{cases}
      m-2\quad & \text{if}\,\, C\cap D=\emptyset, \\
      m-1\quad & \text{otherwise}
    \end{cases}
  \end{equation}
  so that from \eqref{Michael1}-\eqref{Michael3} the orbifold
  Bogomolov-Miyaoka-Yau inequality,
  \begin{equation}\label{Michael4}
    \left(K_{X_0} + D_0\right)^2\leq 3 e_{X_0\setminus D_0}
  \end{equation}
  becomes in the notation of \eqref{Michael3}
  \begin{equation}\label{Michael5}
    3 e_{X\setminus D} - \left(K_X + D\right)^2 \geq
    \begin{cases}
      \frac{t^2}{t+2} - \frac{3}{t+2} +6 \quad &
      \text{if}\,\, C\cap 
      D=\emptyset, \\
      \frac{t^2}{t+1} + 3 \quad & \text{otherwise}\ .
    \end{cases}
  \end{equation}
  Observe that
  \begin{equation*}
    \dfrac{t^2}{t+2} - \dfrac{3}{t+2} +6\ge t+ 3 
  \end{equation*}
  for every $t\ge 0$, moreover since $ 3 e_{X\setminus D} - \left(K_X +
  D\right)^2   $ is an integer, we have 
  \begin{equation*}
    \Big \lceil  \dfrac{t^2}{t+1} + 3 \Big \rceil \le 3 e_{X\setminus D} - 
    \left(K_X +
    D\right)^2\ , 
  \end{equation*}
  but for $t\ge 0$: 
  \begin{equation*}
    \Big \lceil  \dfrac{t^2}{t+1} + 3 \Big \rceil = \Big \lceil
    \dfrac{t^2-1}{t+1} + \dfrac{1}{t+1}
    + 3 \Big \rceil = \Big \lceil t + 2 + \dfrac{1}{t+1} \Big \rceil = t + 3 
  \end{equation*}
   whence \eqref{MichaelsEqn} from \eqref{Michael5}.
\end{proof}

We conclude with
\begin{proof}[Proof of Corollary \ref{corollary:p1example}]
  First of all, observe that the degree of $K_{ \mathbb{P}^2} + D$ equals
  $(d_1+d_2 - 3)$, since  $d_2=6>3$ then
  $ K_{ \mathbb{P}^2} + D$ is nef and $ \left(K_{ \mathbb{P}^2} +
  D\right)^2> 0 $. Moreover,
  \begin{gather}
    D \cdot C = (d_1 + d_2)\ d > 1 \\
    e_{\mathbb{P}^2 \setminus D} = 3 + d_1(d_1 - 3 ) + d_2( d_2 -3) + d_1 d_2
    \\
    (K_{\mathbb{P}^2} + D)^2 = (d_1 + d_2)^2 - 6 (d_1 + d_2) + 9 
  \end{gather}
  and
  \begin{equation}
    \begin{split}
      (K_{ \mathbb{P}^2 } + D)^2 & - e_{\mathbb{P}^2 \setminus
      D} 
      = d_1 d_2 - 3 ( d_1 + d_ 2) + 6 \\
      &
      = \lambda d_2^2 - 3(\lambda + 1) d_2 + 6
      = \lambda d_2 \left( d_2 - 3(\lambda + 1) \right) + 6> 0
    \end{split} 
  \end{equation}
   then the hypotheses of Theorem \ref{item:uppercanonicalbound} are 
  satisfied. Expressing the quantities in Theorem 
  \ref{item:uppercanonicalbound} in terms of the given data we obtain:
  \begin{gather*}
    e_{\mathbb{P}^2 \setminus D} = (\lambda^2 + \lambda + 1) d_2^2 - 3
    (\lambda + 1) d_2 + 3 \le (\lambda^2 + \lambda + 1)d_2^2 \\
    ( K_{ \mathbb{P}^2} + D )^2 = (\lambda + 1 )^2 d_2^2
    -6(\lambda + 1) d_2 + 9 \le (\lambda + 1 )^2 d^2_2 \\
        3 e_{\mathbb{P}^2 \setminus D} - ( K_{ \mathbb{P}^2} + D )^2 
    = ( 2\lambda^2 + \lambda + 2) d_2^2 - 3
    (\lambda + 1 ) d_2 \le  ( 2\lambda^2 + \lambda + 2) d_2^2 \\
        -\frac{1}{2} e_{ C\setminus D } \le g - 1 +
    \frac{(d_1 + d_2)d}{2m} = g - 1 + \frac{(\lambda + 1) }{2m} \nu d_2^2
  \end{gather*}
  moreover
  \begin{gather*}
    ( K_{ \mathbb{P}^2} + D )\cdot C = (d_1+d_2-3) d \ge  \left(
    \lambda + \frac{1}{2} \right)\nu d_2^2 \\
    ( K_{ \mathbb{P}^2 } + D )^2  - e_{ \mathbb{P}^2 \setminus D } 
    = \lambda d_2^2 - 3( \lambda + 1) d_2 + 6 \ge
    \left( \frac{\lambda}{2} - \frac{1}{3} \right)d^2_2 \\
  \end{gather*}
  and if we fix $\lambda_0> \frac{2}{3}$, then for $\lambda \ge \lambda_0$ we
  have $\left( \frac{\lambda}{2} - \frac{1}{3} \right)d^2_2 > 0$. 
  We can now bound $A$, $B$ in \eqref{equation:uppercanonicalbound} in
  terms of $\lambda$ and $d_2$, indeed substituting the above expressions
  in the definition of $A$ and $B$, we get 
  \begin{equation*}
    A \le \mathrm{a}(\lambda),\quad B \le d_2^2\ \mathrm{b} (\lambda)
  \end{equation*}
  where
  \begin{equation*}
    \mathrm{a}(\lambda) = \dfrac{ 2(\lambda + 1 )^2 + (\lambda + 1
      ) \sqrt{  2
    (2 \lambda^2 + \lambda + 2)} }{ \left( \frac{\lambda}{2} -
    \frac{1}{3} \right) }
  \end{equation*}
  and
  \begin{equation*}
    \mathrm{b} (\lambda) = 
    \dfrac{(\lambda + 1)^2 
      (2 \lambda^2 + \lambda + 2) +
      (\lambda^2 + \lambda + 1) (\lambda + 1
      ) \sqrt{  2
    (2 \lambda^2 + \lambda + 2)} }{2 \left( \frac{\lambda}{2} -
    \frac{1}{3} \right) }\ .
  \end{equation*}
  Summing up, we can rewrite \eqref{equation:uppercanonicalbound} as 
  \begin{equation*}
    \left( \lambda + \frac{1}{2} \right)\nu d_2^2 
    \le \left( (\lambda + 1)d_2  -3\right) \nu d_2 \le 
    \mathrm{a}(\lambda) 
    \left( (g-1) + \frac{(\lambda + 1) d_2^2 }{2 m} \nu \right) + d_2^2
    \mathrm{b}(\lambda)
  \end{equation*}
  from which rearranging terms and dividing by $d_2^2$ we get:
  \begin{equation} \label{equation:boundni}
    \left[\left( \lambda + \frac{1}{2} \right) - \frac{(\lambda +
    1) \mathrm{a}(\lambda)}{2m} \right] \nu 
    \le \frac{\mathrm{a}(\lambda)}{d_2^2}
    (g-1)+
    \mathrm{b}(\lambda)\ .
  \end{equation}
  If $m > \frac{(\lambda + 1) \mathrm{a}(\lambda)}{\left( \lambda +
  \frac{1}{2} \right)}$ from \eqref{equation:boundni} we have:
  \begin{equation*}
    \left( \frac{\lambda_0}{2} - \frac{1}{3} \right) \left(
    \frac{\lambda_0}{2} + \frac{1}{4} \right) \nu \le
    \frac{4}{9}g + 22\ , 
  \end{equation*}
  and this concludes the proof of the Corollary if we set
  $\mathrm{h} = \frac{4}{9}$ and $\mathrm{k} = 22$, indeed 
  \begin{equation*}
    \dfrac{(\lambda + 1) \mathrm{a}(\lambda)}{\left( \lambda +
    \frac{1}{2} \right)} \le \dfrac{50}{ \left( \frac{\lambda}{2} -
      \frac{1}{3} \right)}  
  \end{equation*}
  if $\frac{2}{3}< \lambda \le 1$.
\end{proof}


\end{document}